\newtheorem{theorem}{Theorem}[section]
\newtheorem{lemma}[theorem]{Lemma}
\newtheorem{assumption}[theorem]{Assumption}
\theoremstyle{definition}
\newtheorem{definition}[theorem]{Definition}
\theoremstyle{remark}
\newtheorem{remark}[theorem]{Remark}
\numberwithin{equation}{section}
\begin{document}
\def\Pro{{\mathbb{P}}}
\def\E{{\mathbb{E}}}
\def\e{{\varepsilon}}
\def\veps{{\varepsilon}}
\def\ds{{\displaystyle}}
\def\nat{{\mathbb{N}}}
\def\Dom{{\textnormal{Dom}}}
\def\dist{{\textnormal{dist}}}
\def\R{{\mathbb{R}}}
\def\O{{\mathcal{O}}}
\def\T{{\mathcal{T}}}
\def\Tr{{\textnormal{Tr}}}
\def\I{{\mathcal{I}}}
\def\A{{\mathcal{A}}}
\def\H{{\mathcal{H}}}
\def\S{{\mathcal{S}}}
\newcommand{\Om}{\Omega}
\newcommand{\clf}{{\mathcal{F}}}
\newcommand{\cle}{{\mathcal{E}}}
\newcommand{\clg}{{\mathcal{G}}}
\newcommand{\cla}{{\mathcal{A}}}
\newcommand{\cll}{{\mathcal{L}}}
\newcommand{\clp}{{\mathcal{P}}}
\newcommand{\cli}{{\mathcal{I}}}
\newcommand{\PP}{{\mathbb{P}}}
\newcommand{\RR}{{\mathbb{R}}}
\newcommand{\NN}{{\mathbb{N}}}
\newcommand{\bfu}{\mathbf{u}}
\newcommand{\eps}{\epsilon}

\title{Equivalences and counterexamples between several definitions of the uniform large deviations principle}%
\author{M. Salins \footnote{
Boston University, Department of Mathematics and Statistics}}
\maketitle
\begin{abstract}
  This paper explores the equivalences between four definitions of uniform large deviations principles and uniform Laplace principles found in the literature. Counterexamples are presented to illustrate the differences between these definitions and specific conditions are described under which these definitions are equivalent to each other. A fifth definition called the equicontinuous uniform Laplace principle (EULP) is proposed and proven to be equivalent to Freidlin and Wentzell's definition of a uniform large deviations principle. Sufficient conditions that imply a measurable function of infinite dimensional Wiener process satisfies an EULP using the variational methods of Budhiraja, Dupuis and Maroulas are presented. This theory is applied to prove that a family of Hilbert space valued stochastic equations exposed to multiplicative noise satisfy a uniform large deviations principle that is uniform over all initial conditions in bounded subsets of the Hilbert space. This is an improvement over previous weak convergence methods which can only prove uniformity over compact sets.
\end{abstract}

\section{Introduction}
The theory of large deviations principles, developed in the 1960s by Freidlin, Wentzell, Varadhan and others, characterizes the asymptotic decay rate of rare probabilities. There are a several manuscripts on the theory of large deviations including \cite{dz,de,fk,fw,v}. One setting for the problem is as follows. Let $(\mathcal{E},\rho)$ be a Polish space.   Let $\{X^\e\}_{\e>0}$ be a collection of $\mathcal{E}$-valued random variables, let $a(\e)$ be a positive real-valued function with the property that $\lim_{\e \to 0} a(\e) = 0$ and let $I: \mathcal{E} \to [0,+\infty]$ be a lower semi-continuous function. A family of $\mathcal{E}$-valued random variables $\{X^\e\}_{\e>0}$ is said to satisfy a large deviations principle with respect to a rate function $I$ and speed $a(\e)$ if \cite{dz}
\begin{enumerate}[(a)]
  \item For any open $G \subset \mathcal{E}$,
  \begin{equation} \label{eq:LDP-open}
    \liminf_{\e \to 0} a(\e) \log \Pro(X^\e \in G) \geq -\inf_{\varphi \in G} I(\varphi)
  \end{equation}
  \item and for any closed $F \subset \mathcal{E}$,
  \begin{equation} \label{eq:LDP-closed}
    \limsup_{\e \to 0} a(\e) \log \Pro(X^\e \in F) \leq - \inf_{\varphi \in F} I(\varphi).
  \end{equation}
\end{enumerate}
By Theorem 4.2 of \cite{comman-2003} (see also Theorems 1.2.1 and 1.2.3 of \cite{dz}), the large deviations principle is equivalent to the so-called Laplace principle, which says that for any bounded and continuous $h: \mathcal{E} \to \mathbb{R}$,
\begin{equation} \label{eq:LP}
  \lim_{\e \to 0} a(\e) \log \E \exp \left(\frac{h(X^\e)}{a(\e)} \right) = - \inf_{\varphi \in \mathcal{E}} \{h(\varphi) + I(\varphi)\}.
\end{equation}
For any $s\geq 0$ define the level sets of $I$ by $\Phi(s) := \{\varphi \in \mathcal{E}: I(\varphi) \leq s\}$. If $\Phi(s)$ is a compact subset of $\mathcal{E}$ for any $s\geq 0$, then $I$ is called a good rate function. If $I$ is a good rate function, then an equivalent formulation of the large deviations principles \cite[Theorem 3.3.3]{fw} is the following formulation by Freidlin and Wentzell
\begin{enumerate}[(a)]
 \item For any $\delta>0$ and $s_0>0$,
 \begin{equation} \label{eq:FWLDP-lower}
   \liminf_{\e \to 0} \inf_{\varphi \in  \Phi(s_0)} \left( a(\e) \log \Pro(\rho(X^\e,\varphi)<\delta) + I(\varphi) \right) \geq 0.
 \end{equation}
 \item For any $\delta>0$ and $s_0>0$,
 \begin{equation} \label{eq:FWLDP-upper}
   \limsup_{\e \to 0} \sup_{s \in [0,s_0]} \left( a(\e) \log \Pro(\dist(X^\e,\Phi(s))\geq \delta) + s \right) \leq 0.
 \end{equation}
 where for any point $\varphi \in \mathcal{E}$ and any set $B \subset \mathcal{E}$, we defined the distance function by $\dist(\varphi, B) =\inf_{\psi \in B} \rho(\varphi,\psi)$.
\end{enumerate}

While these three formulations of the large deviations principle are all known to be equivalent, the situation is more complicated when the random variables depend on another parameter in addition to $\e$. As a motivating example, consider the family of small noise stochastic differential equations
\begin{equation} \label{eq:SDE}
  dX^\e_x(t) = b(X^\e_x(t)) dt + \sqrt{\e} \sigma(X^\e_x(t))dW(t), \ \ \ \ \ \  X^\e_x(0) = x \in \mathbb{R}^d.
\end{equation}
In the above equation, $W(t)$ is a $d$-dimensional Wiener process, $b:\mathbb{R}^d \to \mathbb{R}^d$ is a Lipschitz continuous vector field and $\sigma: \mathbb{R}^d \to \mathbb{R}^{d\times d}$ is a Lipschitz continuous $d\times d$ matrix valued function.
Notice that the $X^\e_x$ are indexed both by the size of the noise $\e$ and the initial condition $x$. We consider $X^\e_x$ as $\mathcal{E}=C([0,T]:\mathbb{R}^d)$-valued random variables where $C([0,T]:\mathbb{R}^d)$ is the space of continuous $\mathbb{R}^d$-valued functions endowed with the supremum norm.



For several applications, such as characterizing the exit time of $X^\e_x$ from a domain, the large deviations of $X^\e_x$ must be \textit{uniform} with respect to the initial conditions in certain subsets of the space \cite{dz,fw}. In this paper we compare several definitions of uniform large deviations principles that are found in the literature.  The first definition of a uniform large deviations principle is due to Freidlin and Wentzell \cite{fw} (Definition \ref{def:fwuldp} below). We will call this the Freidlin-Wentzell uniform large deviations principle (FWULDP). The next definition can be found in \cite{dz} (Definition \ref{def:dzuldp} below). We will call it the Dembo-Zeitouni uniform large deviations principle (DZULDP). The third definition is called the uniform Laplace principle (ULP) and can be found in \cite{de} (Definition \ref{def:ulp} below).

Each of these definitions has been widely used in the literature. The following lists are references are by no means complete, but they give examples of the wide varieties of problems in which these different definitions of uniform large deviations have been used. The FWULDP has been used in the work of Cerrai and R\"ockner \cite{cr-2004}, Peszat \cite{p-1994}, and Sowers \cite{s-1992}. The DZULDP has been used by Chenal and Millet \cite{cm-1997}, Gautier \cite{g-2005}, and Veretennikov \cite{v-2000}. A very general weak convergence approach that is sufficient to prove the uniform Laplace principle was introduced by Budhiraja, Dupuis, and Maroulas \cite{bdm-2008}. Since then, the ULP has been used by many authors including Budhiraja and Biswas \cite{bb-2011}, Wu \cite{w-2011}, and Cai, Huang  and Maroulas \cite{chm-2015}.

The main question of this paper is whether the FWULDP, the DZULDP, and the ULP are equivalent. Without further assumptions, the answer is no. In section \ref{S:counterexamples} we illustrate this lack of equivalence with  simple counter-examples. We study stochastic processes $X^\e_x(t) = x + \sqrt{\e}W(t)$, where $x \in \mathbb{R}$ and $W(t)$ is a one-dimensional Brownian motion. First, we show in Theorem \ref{thm:BM-FWULDP} that $\{X^\e_x\}$ satisfies a FWULDP that is uniform over $x$ in the whole space. On the other hand, $X^\e_x$ does not satisfy either a DZULDP or a ULP over the whole space (Theorems \ref{thm:counter-BM-DZ} and \ref{thm:ulp-counter}). In fact, the DZULDP fails to hold for $X^\e_x$ uniformly over $x$ in a set $A$ if $A$ fails to be compact. We give an example where the DZULDP fails to hold for the bounded, pre-compact, but not compact set $A = \{2^{-n}\}_{n \in \NN}$ (Remark \ref{rem:counterexample-bounded}). These counterexamples prove that these three definitions are not exactly the same. Their equivalences requires certain compactness criteria.

The general setting for this problem is to let $(\mathcal{E},\rho)$ be a Polish space and $\mathcal{E}_0$ be a set used for indexing. At first, we make no assumptions about topology on $\mathcal{E}_0$. We consider a family of $\mathcal{E}$-valued random variables $\{X^\e_x\}$ indexed by $\e>0$ and $x \in \mathcal{E}_0$. For each $x \in \mathcal{E}_0$ there is a function $I_x: \mathcal{E} \to [0,+\infty]$ called a rate function. For $x \in \mathcal{E}_0$ and $s\geq 0$, the level sets of $I_x$ are denoted by $\Phi_x(s) = \{\varphi \in \mathcal{E}: I_x(\varphi) \leq s\}$.

In Theorem \ref{thm:FWULDP=ULP}, we prove that the FWULDP and the ULP are equivalent under the assumption that $\bigcup_{x \in A} \Phi_x(s)$ is a pre-compact subset of $\mathcal{E}$ for any $A \in \mathscr{A}$ and $s\geq0$ (Assumption \ref{assum:compact-level-union}). Neither the definitions of the FWULDP and ULP, nor their equivalence theorem require any kind of topology on the index set $\mathcal{E}_0$. The equivalence between FWULDP and the DZULDP, on the other hand, requires that $\mathcal{E}_0$ be metrizable and that whenever $x_n \to x$ in $\mathcal{E}_0$, the level sets $\Phi_{x_n}(s)$ converge to $\Phi_x(s)$ in an appropriate Hausdorff metric (Assumption \ref{assum:compact-initial-conds}). Under that assumption along with the assumption that $\mathscr{A}$ is the collection of compact subsets of $\mathcal{E}_0$, the FWULDP and DZULDP are equivalent (Theorem \ref{thm:FWULDP=DZULDP}). In the case where $x$ encodes the initial condition of a stochastic process $X^\e_x$, Assumption \ref{assum:compact-initial-conds} requires that $\mathscr{A}$ contains only compact sets of initial conditions and Assumption \ref{assum:compact-level-union} requires that $\mathscr{A}$ contains only pre-compact sets of initial conditions.

In the setting of finite dimensional stochastic differential equations such as \eqref{eq:SDE}, the restriction to compact or pre-compact subsets of initial conditions is usually not terribly restrictive. For example, when studying the exit time of $X^\e_x$ from a bounded domain, it is sufficient to prove uniformity of the large deviations principle over initial conditions in compact sets because all closed bounded sets are compact. In infinite dimensional spaces, on the other hand, bounded sets are not generally compact. Furthermore, compact subsets of infinite dimensional Banach spaces have no interior. This means that compact sets are not very helpful for studying exit problems because exterior points of a compact set are arbitrarily close to every element of the set. The reliance on the compactness or pre-compactness of sets of initial conditions when using the ULP and DZULDP demonstrates some limitations of these two approaches.

There are various possible modifications to the ULP and DZULDP that remove this reliance on compactness. Recently, David Lipshutz \cite{l-2017} studied exit problems for stochastic delay equations with small noise. The initial conditions belong to the space of continuous functions $C([-\tau,0])$, which is an infinite dimensional space.  To prove the exit time asymptotics, Lipshutz proposed a modification of the DZULDP that we call the LULDP (Definition \ref{def:luldp} below).
This definition fixes the problems pointed out by our counterexamples and in particular, the LULDP can be valid for $A$ that are not compact. Unfortunately, the LULDP is not equivalent to the FWULDP as we show in Theorem \ref{thm:luldp-counter}. This counterexample involves the process $Y^\e_x(t) = (1+\e)x + \sqrt{\e}W(t)$, which does not satisfy a FWULDP but does satisfy a LULDP over the whole space.

The compactness of $\bigcup_{x \in A} \Phi_x(s)$ is required to prove the equivalence between the FWULDP and the ULP precisely because continuous functions on compact sets are uniformly continuous. When this compactness is lacking, as is the case for $X^\e_x(t) = x + \sqrt{\e}W(t)$ with $A = \mathbb{R}$, we build our counterexample by choosing a function $h: \mathcal{E} \to \mathbb{R}$ that is continuous, but not uniformly continuous. Based on this observation, we propose the new definition of the equicontinuous uniform Laplace principle (EULP) (Definition \ref{def:eulp} below). The equicontinuous uniform Laplace principle is like the uniform Laplace principle with the added requirement that the limit must also be uniform over any family of equibounded, equicontinuous test functions from $\mathcal{E} \to \mathbb{R}$.

We show in Theorem \ref{thm:FWULDP=EULP} that the EULP and FWULDP are equivalent with no extra assumptions. In particular, this equivalence does not require the compactness of initial conditions or of level sets.
The benefit of the EULP is that it can be proven via the variational methods of Budhiraja, Dupuis, and collaborators \cite{bd-1998,bd-2000,bdm-2008}. In those papers, they used a variational method to study the uniform Laplace principle for a family of measurable mappings of infinite dimensional Wiener processes. The method was sufficient for proving that a ULP held uniformly with respect to initial conditions in compact sets. In Section \ref{S:EULP} we modify this method to be applicable for initial conditions that are not in compact sets. Specifically, in \cite{bdm-2008}, Budhiraja, Dupuis, and Maroulas assumed that for all $\e\geq 0$ there were measurable mappings $\mathscr{G}^\e$, such that $X^\e_x = \mathscr{G}^\e(x, \sqrt{\e}\beta)$, where $\beta$ is some infinite dimensional Wiener process. They assume that if $x_n \to x$ and $u_n$ converge in distribution to $u$  in the weak  topology on $L^2([0,T]:H_0)$ for an appropriately defined space $H_0$, that
$\mathscr{G}^\e\left(x_n, \sqrt{\e} \beta + \int_0^\cdot u_n(s)ds \right)$ converges in distribution to  $\mathscr{G}^0\left( x, \int_0^\cdot u(s)ds\right)$.

If the initial conditions do not belong to a compact set, such a weak convergence approach is impossible. For an example, consider $X^\e_x(t) = x + \sqrt{\e}W(t)$ where we take an unbounded sequence of initial conditions. If $x_n = n$, and $\mathscr{G}^\e(x,\sqrt{\e}W) = x + \sqrt{\e}W$, then for a sequence $u_n \in L^2([0,T])$ almost surely, it is impossible for $\mathscr{G}^\e\left( x_n, \sqrt{\e}W + \int_0^\cdot u_n(s)ds \right) = n + \sqrt{\e}W + \int_0^\cdot u_n(s)ds$ to converge in distribution to anything because the initial conditions diverge.

In this paper, we do not assume that $\mathcal{E}_0$ has any topology and we do not even require that the mapping $x \mapsto \mathscr{G}^\e(x, w)$ be measurable. To emphasize this we consider for any $\e>0$ and $x \in \mathcal{E}_0$ measurable mappings $\mathscr{G}^\e_x: C([0,T]:\mathbb{R}^\infty) \to \mathcal{E}$. Instead of working with weak convergence, we require that $\mathscr{G}^\e_x\left( \sqrt{\e}\beta + \int_0^\cdot u(s)ds \right)$ converges to $\mathscr{G}^0_x\left( \int_0^\cdot  u(s)ds \right)$ in probability uniformly with respect to $x$ and $u$. Specifically, we prove that the EULP will hold if for any $\delta>0$,
\[\lim_{\e \to 0}\sup_{x \in A} \sup_{ u \in \mathcal{P}_2^N}  \Pro \left(\rho \left( \mathscr{G}^\e_x \left( \sqrt{\e} \beta + \int_0^\cdot u(s)ds \right), \mathscr{G}^0_x \left( \int_0^\cdot u(s)ds \right) \right)>\delta \right) = 0,\]
where $\mathcal{P}_2^N$ is a family of progressively measurable processes in an appropriate space whose $L^2$ norms are bounded by $N$ with probability one (Assumption \ref{assum:mathcal-G}).

In Section \ref{S:example}, we apply this theory to study the uniform large deviations of a Hilbert space valued family of stochastic process. Let $H$ be a separable infinite dimensional Hilbert space and study the mild solutions to the abstract stochastic differential equations (see \cite[Chapter 7.1.1]{dpz})
\[dX^\e_x(t) = [\mathcal AX^\e_x(t) + B(X^\e_x(t))]dt  + \sqrt{\e}G(X^\e_x(t))dw(t), \ \ \ X^\e_x(0) = x \in H.\]
In this equation, $\mathcal A$ is an unbounded linear operator that generates a $C_0$ semigroup on $H$ and $w(t)$ is a cylindrical Wiener process on another separable Hilbert space $U$. We show that if $B$ and $G$ are globally Lipschitz continuous in an appropriate sense, then the mild solutions to $X^\e_x$ satisfy a EULP (and therefore also a FWULDP) in $\mathcal{E}= C([0,T]:H)$ that is uniform over initial conditions in bounded subsets of $H$. Note that bounded subsets of $H$ are generally not compact. Furthermore, we show that if the multiplicative noise coefficient $G$ is bounded in an appropriate sense, then the FWULDP is uniform over initial conditions in any subset of $H$ including unbounded subsets (and including the entire space). This result demonstrates the power of the EULP because previous variational methods could only be used to prove uniformity over compact sets of initial conditions.

The outline of this paper is as follows. In Section \ref{S:results} we state the assumptions and main results of this paper. In Section \ref{S:counterexamples}, we present counterexamples to demonstrate the lack of equivalence between the FWULDP, DZULDP, ULP, and LULDP.  In Section \ref{S:example}, we use the EULP to prove that a Hilbert space valued stochastic process satisfies a FWULDP that is uniform over initial conditions in bounded (but not necessarily compact) subsets of the infinite dimensional Hilbert space. We also give a conditions under which the Hilbert space valued process satisfies a FWULDP that is uniform over initial conditions in any (including unbounded) subsets of $H$.  In Sections \ref{S:Proof-FWULDP=ULP}--\ref{S:Proof-FWULDP=EULP}, we prove the equivalence between the FWULDP and the ULP, DZULDP, and EULP under appropriate assumptions. In Section \ref{S:EULP}, we prove that uniform convergence in probability for certain measurable functionals of infinite dimensional Wiener processes implies that the processes satisfy an EULP.  In Appendix \ref{S:properties} we recall some useful properties about rate functions. Appendices \ref{S:lemmas} and \ref{S:good} include some proofs about the Hilbert space valued process from Section \ref{S:example}.

\section{Assumptions and main results} \label{S:results}
Let $(\mathcal{E},\rho)$ be a Polish space and let $\mathcal{E}_0$ be a set. For now we do not make any topological assumptions about $\mathcal{E}_0$. For any $\varphi \in \mathcal{E}$ and $B \subset \mathcal{E}$, let
\begin{equation}
  \dist(\varphi,B) = \inf_{\psi \in B} \rho(\varphi,\psi).
\end{equation}
 We recall the definition of the Hausdorff metric on nonempty closed subsets of $\mathcal{E}$. For any nonempty, closed subsets $B_1,B_2 \subset \mathcal{E}$, the Hausdorff metric is given by
\begin{equation} \label{eq:Hausdorff}
  \lambda(B_1,B_2) =  \max \left\{\sup_{\varphi \in B_1} \dist(\varphi, B_2), \sup_{\varphi \in B_2} \dist(\varphi, B_1) \right\}.
\end{equation}

The space of bounded continuous functions $h:\mathcal{E} \to \mathbb{R}$ is denoted by $C(\mathcal{E})$. This is a Banach space under the sup-norm  $\|h\|_{C(\mathcal{E})} = \sup_{\varphi \in \mathcal{E}} |h(\varphi)|.$

Let $(\Omega, \mathcal{F}, \Pro)$ be a probability space and let $\{X^\e_x: \e>0, x \in \mathcal{E}_0\}$ be a collection of $\mathcal{E}$-valued random variables. We denote the expectation in $(\Omega, \mathcal{F}, \Pro)$ by $\E$. Let $\{I_x: x \in \mathcal{E}_0\}$ be a collection of lower-semicontinuous rate functions $I_x: \mathcal{E} \to [0,+\infty]$. Let $\Phi_x(s) = \{\varphi \in \mathcal{E}: I_x(\varphi)\leq s\}$ be the level sets of $I_x$. If $\Phi_x(s)$ is a compact subset of $\mathcal{E}$ for all $s\geq0$, then $I_x$ is called a good rate function.

The first definition of a uniform large deviations principle is due to Freidlin and Wentzell and is defined at the end of Section 3.3 of \cite{fw}.
\begin{definition}[ Freidlin-Wentzell uniform large deviations principle \\(FWULDP)] \label{def:fwuldp}
  Let $\mathscr{A}$ be a collection of subsets of $\mathcal{E}_0$ and $a(\e)$ be a function converging to zero as $\e$ converges to zero. The random variables $\{X^\e_x\}$ are said to satisfy a Freidlin-Wentzell uniform large deviations principle with respect to the rate functions $I_x$ with speed $a(\e)$ uniformly over $\mathscr{A}$, if
  \begin{enumerate}[(a)]
    \item For any $A \in \mathscr{A}$, $s_0>0$, and $\delta>0$,
    \begin{equation} \label{eq:fwuldp-lower}
      \liminf_{\e \to 0} \inf_{x \in A} \inf_{\varphi \in \Phi_x(s_0)} \left(a(\e) \log \Pro(\rho(X^\e_x, \varphi)<\delta) + I_x(\varphi) \right) \geq 0.
    \end{equation}
    \item For any $A \in \mathscr{A}$, $s_0>0$, and $\delta>0$,
    \begin{equation} \label{eq:fwuldp-upper}
      \limsup_{\e \to 0} \sup_{x \in A} \sup_{s \in [0,s_0]} \left(a(\e) \log \Pro(\dist(X^\e_x, \Phi_x(s)) \geq \delta) + s \right) \leq 0.
    \end{equation}
  \end{enumerate}
\end{definition}

The next definition of uniform large deviations principle can be found in Corollary 5.6.15 of \cite{dz}. For the purposes of this paper, we will consider this as a definition.
For any set $G \subset \mathcal{E}$, let $I_x(G) := \inf_{\varphi \in G} I_x(\varphi)$.
\begin{definition}[ Dembo-Zeitouni uniform large deviations principle \\ (DZULDP) ] \label{def:dzuldp}
  Let $\mathscr{A}$ be a collection of subsets of $\mathcal{E}_0$ and $a(\e)$ be a function converging to zero as $\e$ converges to zero. The random variables $\{X^\e_x\}$ are said to satisfy a Dembo-Zeitouni uniform large deviations principle with respect to the rate functions $I_x$ with speed $a(\e)$ uniformly over $\mathscr{A}$, if
  \begin{enumerate}[(a)]
    \item For any $A \in \mathscr{A}$ and any open $G \subset \mathcal{E}$,
    \begin{equation} \label{eq:dzuldp-lower}
      \liminf_{\e \to 0} \inf_{x \in A}  \left(a(\e) \log \Pro(X^\e_x\in G)  \right) \geq -\sup_{x \in A} I_x(G).
    \end{equation}
    \item For any $A \in \mathscr{A}$ and any closed $F \subset \mathcal{E}$,
    \begin{equation} \label{eq:dzuldp-upper}
      \limsup_{\e \to 0} \sup_{x \in A}  \left(a(\e) \log \Pro(X^\e_x\in F)   \right) \leq -\inf_{x \in A} I_x(F).
    \end{equation}
  \end{enumerate}
\end{definition}

The third definition of a uniform large deviations principle is called the uniform Laplace principle. The uniform Laplace principle can be found in Definition 1.2.6 of \cite{de}. Based on the variational principle and the weak convergence approach in the papers by Budhiraja, Bou\'e, Dupuis, and Maroulas \cite{bd-1998,bd-2000,bdm-2008}, the uniform Laplace principle can be easier to verify directly than either of the uniform large deviations principles.
\begin{definition}[Uniform Laplace principle (ULP)] \label{def:ulp}
  Let $\mathscr{A}$ be a collection of subsets of $\mathcal{E}_0$ and $a(\e)$ be a function converging to zero as $\e$ converges to zero. The random variables $\{X^\e_x\}$ are said to satisfy a uniform Laplace principle with respect to the rate functions $I_x$ with speed $a(\e)$ uniformly over $\mathscr{A}$, if for any $A \in \mathscr{A}$ and any bounded, continuous $h: \mathcal{E} \to \mathbb{R}$,
  \begin{equation} \label{eq:ulp}
    \lim_{\e \to 0} \sup_{x \in A} \left| a(\e) \log \E \exp \left(-\frac{h(X^\e_x)}{a(\e)} \right) + \inf_{\varphi \in \mathcal{E}} \{h(\varphi) + I_x(\varphi)\} \right|=0.
  \end{equation}
\end{definition}

We now state the main assumptions and results of this paper.
\begin{assumption} \label{assum:compact-level-union} 
     $\mathscr{A}$ is a collection of subsets of $\mathcal{E}_0$ with the property that for any $s \geq0$ and $A \in \mathscr{A}$, $\bigcup_{x \in A} \Phi_x(s)$ is a pre-compact subset of $\mathcal{E}$.
\end{assumption}
\begin{theorem} \label{thm:FWULDP=ULP}
 Under Assumption \ref{assum:compact-level-union}, the FWULDP and ULP are equivalent.
\end{theorem}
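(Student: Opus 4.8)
The plan is to prove both implications by combining the (non-uniform) equivalence between the large deviations principle, the Laplace principle, and the Freidlin--Wentzell formulation with a uniformity argument that exploits Assumption~\ref{assum:compact-level-union}. The key point is that for a fixed $A \in \mathscr{A}$ and $s \geq 0$, the set $K := \overline{\bigcup_{x \in A}\Phi_x(s)}$ is compact, and on a compact set every bounded continuous function is uniformly continuous; this is exactly what is needed to pass from pointwise-in-$x$ statements to uniform-in-$x$ statements.

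For the direction FWULDP $\Rightarrow$ ULP, I would fix $A \in \mathscr{A}$ and a bounded continuous $h$, and aim to show
\[
  \lim_{\e\to0}\sup_{x\in A}\left(a(\e)\log\E\exp\!\Big(\!-\tfrac{h(X^\e_x)}{a(\e)}\Big) + \inf_{\varphi\in\mathcal{E}}\{h(\varphi)+I_x(\varphi)\}\right)=0
\]
and the analogous statement with the roles of $\limsup/\liminf$ producing the opposite inequality. The upper bound on $a(\e)\log\E\exp(-h(X^\e_x)/a(\e))$ (i.e. the inequality $\leq -\inf\{h+I_x\} + o(1)$ uniformly) should follow from the Freidlin--Wentzell upper bound \eqref{eq:fwuldp-upper}: split the expectation according to the sublevel sets $\Phi_x(s)$ for $s$ ranging over a fine partition of $[0,s_0]$ (with $s_0$ chosen large depending on $\|h\|_{C(\mathcal{E})}$) plus the tail $\dist(X^\e_x,\Phi_x(s_0))\geq\delta$, use that $h$ varies by at most the modulus of continuity $\omega_h(\delta)$ (uniform continuity of $h$ on the compact set $K$ associated to $s_0$) on a $\delta$-neighborhood of each sublevel set, and let $\delta\to0$ after $\e\to0$. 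The matching lower bound on $a(\e)\log\E\exp(-h(X^\e_x)/a(\e))$ should come from the Freidlin--Wentzell lower bound \eqref{eq:fwuldp-lower}: restrict the expectation to the event $\rho(X^\e_x,\varphi_x)<\delta$ for a near-minimizer $\varphi_x$ of $h+I_x$, again using the uniform continuity of $h$ on $K$ and the fact that the near-minimizer can be taken in $\Phi_x(s_0)$ for $s_0$ large. The uniformity in $x\in A$ is exactly where Assumption~\ref{assum:compact-level-union} enters, since it makes $\omega_h$ on $K$ a single modulus valid for all $x\in A$.

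For the converse ULP $\Rightarrow$ FWULDP, I would derive the two Freidlin--Wentzell bounds from the Laplace-principle limit. For the upper bound \eqref{eq:fwuldp-upper}, given $\delta>0$ and $s_0>0$, I would construct, for each relevant $s$, a bounded continuous function $h$ that is (roughly) $0$ on $\Phi_x(s)$ and $\approx -(s+1)$ outside its $\delta$-neighborhood — here one must be careful that a single $h$, or a finite family of $h$'s indexed by a partition of $[0,s_0]$, works \emph{simultaneously} for all $x\in A$; this is possible because all the sets $\Phi_x(s)$, $x\in A$, $s\in[0,s_0]$, sit inside the common compact set $K$, so one can build $h$ from the distance function to $K$-dependent neighborhoods. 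Plugging such $h$ into \eqref{eq:ulp} and using $\E\exp(-h(X^\e_x)/a(\e)) \geq \Pro(\dist(X^\e_x,\Phi_x(s))\geq\delta)\exp((s+1)/a(\e))$ (approximately) yields the claim after taking $\e\to0$. For the lower bound \eqref{eq:fwuldp-lower}, given $\varphi\in\Phi_x(s_0)$ and $\delta>0$, I would choose a bounded continuous $h$ that is a narrow well centered at $\varphi$ with depth exceeding $s_0$; then $\inf_\psi\{h(\psi)+I_x(\psi)\} \leq h(\varphi)+I_x(\varphi)$ controls the right-hand side while $\E\exp(-h(X^\e_x)/a(\e))\leq \exp(-(\inf h)/a(\e))\Pro(\rho(X^\e_x,\varphi)<\delta) + (\text{error from outside the well})$, and feeding this into \eqref{eq:ulp} gives the bound — again with the single-modulus/compactness input making everything uniform over $x\in A$ and over $\varphi\in\Phi_x(s_0)$.

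The main obstacle, and where the real work lies, is the ULP $\Rightarrow$ FWULDP direction: one has to produce test functions $h$ (independent of $x$, or at least from a finite family) that localize simultaneously around the level sets $\Phi_x(s)$ for all $x\in A$, and one must carefully control the contribution to the Laplace functional coming from the region where $X^\e_x$ lies outside the $\delta$-neighborhood of the relevant sublevel set but where $h$ is not yet at its floor value; this requires choosing the depth of $h$ larger than $s_0$ and then letting that depth and the partition mesh go to appropriate limits in the correct order (first $\e\to0$, then refine). In both directions the subtlety is purely in the order of limits and in verifying that Assumption~\ref{assum:compact-level-union} indeed delivers a \emph{single} modulus of continuity usable for all $x\in A$; the rest is the standard machinery already available from the non-uniform equivalence cited via \cite{comman-2003,dz,fw}.
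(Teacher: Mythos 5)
Your FWULDP $\Rightarrow$ ULP direction is essentially the paper's own argument (Lemmas \ref{lem:fw->ulp-low} and \ref{lem:fw->ulp-up}): a near-minimizer $\varphi_x \in \Phi_x(s_0)$ with $s_0 \sim 2\|h\|_{C(\mathcal{E})}$ for the lower bound, and a decomposition of $\mathcal{E}$ into shells $\{\dist(\cdot,\Phi_x(k\eta/2))\geq \delta,\ \dist(\cdot,\Phi_x((k+1)\eta/2))<\delta\}$ for the upper bound, with Assumption \ref{assum:compact-level-union} supplying a single modulus of continuity for $h$ near $K=\overline{\bigcup_{x\in A}\Phi_x(s_0)}$. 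That half is sound as sketched.

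The gap is in the ULP $\Rightarrow$ FWULDP direction, and it sits exactly where you flagged "the real work." Your proposed fix --- "build $h$ from the distance function to $K$-dependent neighborhoods" --- does not work as stated: the inequalities you need, namely $\Pro(\dist(X^\e_x,\Phi_x(s))\geq\delta) \leq \E\exp(-h(X^\e_x)/a(\e))\cdot e^{-\inf h /a(\e)}$ for the upper bound and $\Pro(\rho(X^\e_x,\varphi)<\delta) \geq$ (Laplace functional) for the lower bound, require $h$ to localize around the \emph{individual} level set $\Phi_x(s)$ (resp.\ the individual point $\varphi\in\Phi_x(s_0)$), not around the union $K$. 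A test function that is flat on all of $K$ fails to vanish on $\{\dist(\cdot,\Phi_x(s))\geq\delta\}\cap K^\delta$, which is typically nonempty, and the ULP \eqref{eq:ulp} only delivers uniformity in $x$ for a \emph{fixed} $h$, so you cannot simply let $h$ depend on $x$ or on $\varphi$ and take the supremum afterwards. The missing mechanism is a reduction to a single test function: the paper argues along arbitrary sequences $(x_n,\varphi_n,\e_n)$, resp.\ $(x_n,s_n,\e_n)$, and uses precompactness of $\bigcup_{x\in A}\Phi_x(s_0)$ to extract $\varphi_n\to\tilde\varphi$ (lower bound, one well centered at $\tilde\varphi$ then suffices for the tail of the subsequence), and for the upper bound uses the fact that the closed subsets of the compact set $\overline{\bigcup_{x\in A}\Phi_x(s_0)}$ form a \emph{compact space under the Hausdorff metric} $\lambda$, so that $\Phi_{x_n}(s_n)\to B$ along a subsequence and one $h$ built from $\dist(\cdot,B)$ works. (A finite $\delta$-net of $K$, resp.\ of the hyperspace of closed subsets of $K$ in the metric $\lambda$, would serve equally well and realizes your "finite family" remark, but either way the compactness being exploited is of the family of level sets as sets, not merely the uniform continuity of $h$ on $K$.) Without this step the argument does not close.
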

Theorem \ref{thm:FWULDP=ULP} is proven in Section \ref{S:Proof-FWULDP=ULP}.
\vspace{.3cm}

The equivalence between the FWULDP and DZULDP requires extra topological assumptions on $\mathcal{E}_0$.
\begin{assumption}\label{assum:compact-initial-conds} \hspace{1cm}
  \begin{enumerate}[(a)]
   \item $\mathcal{E}_0$ is a Polish space with metric $\rho_0$.
   \item $\mathscr{A}$ is the collection of compact subsets of $\mathcal{E}_0$.
   \item For every $x \in \mathcal{E}_0$, $I_x$ is a good rate function.
   \item The level sets are continuous in the Hausdorff metric in the sense that for any $s\geq 0$,
  \[\lim_{n \to +\infty} \rho_0 (x_n,x)=0 \text{ implies }\lim_{n \to +\infty} \lambda(\Phi_{x_n}(s), \Phi_x(s))=0.\]
  \end{enumerate}
\end{assumption}
\begin{theorem} \label{thm:FWULDP=DZULDP}
  Under Assumption \ref{assum:compact-initial-conds}, the FWULDP and DZULDP are equivalent.
\end{theorem}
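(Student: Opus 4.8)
The plan is to prove the two implications separately, after first squeezing out of Assumption~\ref{assum:compact-initial-conds} the three compactness facts that do all the work: for every compact $A\subseteq\mathcal E_0$ and every $s\ge 0$, (i) $\bigcup_{x\in A}\Phi_x(s)$ is pre-compact in $\mathcal E$; (ii) $(x,\varphi)\mapsto I_x(\varphi)$ is jointly lower semicontinuous on $\mathcal E_0\times\mathcal E$; and (iii) if $F\subseteq\mathcal E$ is closed and $F\cap\Phi_x(s)=\emptyset$ for all $x\in A$, then $\inf_{x\in A}\dist(F,\Phi_x(s))>0$. All three follow from one sequential argument: if $x_n\to x^*$ in $\mathcal E_0$ and $I_{x_n}(\varphi_n)\le s_n$ with $\limsup_n s_n\le s$, then Assumption~\ref{assum:compact-initial-conds}(d) gives $\dist(\varphi_n,\Phi_{x^*}(s'))\to 0$ for every $s'>s$, so any limit point $\varphi^*$ of $(\varphi_n)$ lies in $\bigcap_{s'>s}\Phi_{x^*}(s')=\Phi_{x^*}(s)$ (the level sets being compact, hence closed), and compactness of the $\Phi_{x^*}(s')$ guarantees such limit points exist; taking $s_n\equiv s$ yields (i) and (iii), while taking $s_n=I_{x_n}(\varphi_n)$ along a subsequence realizing the $\liminf$ yields (ii). (Some of these may already be recorded in Appendix~\ref{S:properties} and can simply be cited.) In particular (i) shows Assumption~\ref{assum:compact-initial-conds} implies Assumption~\ref{assum:compact-level-union} with $\mathscr A$ the compact sets, so by Theorem~\ref{thm:FWULDP=ULP} the ULP could be used as an intermediate step; I expect the direct route below to be no longer.

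For \emph{FWULDP $\Rightarrow$ DZULDP}, the upper bound \eqref{eq:dzuldp-upper} is almost immediate from (iii): given closed $F$ and any $s_1<\inf_{x\in A}I_x(F)$ (the cases $\inf_{x\in A}I_x(F)=0$ and $=+\infty$ being trivial, the latter by letting $s_1\to\infty$), the number $\delta_1:=\inf_{x\in A}\dist(F,\Phi_x(s_1))$ is positive, so $\{X^\e_x\in F\}\subseteq\{\dist(X^\e_x,\Phi_x(s_1))\ge\delta_1\}$ for all $x\in A$, and applying \eqref{eq:fwuldp-upper} with $s_0=s=s_1$, $\delta=\delta_1$ gives $\limsup_{\e\to 0}\sup_{x\in A}a(\e)\log\Pro(X^\e_x\in F)\le-s_1$; then let $s_1\uparrow\inf_{x\in A}I_x(F)$. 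For the lower bound \eqref{eq:dzuldp-lower}, fix open $G$ and $\eta>0$ with $s_1:=\sup_{x\in A}I_x(G)<\infty$ (else nothing to prove). For each $x\in A$ pick $\varphi_x\in G$ with $I_x(\varphi_x)<s_1+\eta/2$ and a ball $B(\varphi_x,\delta_x)\subseteq G$; by Assumption~\ref{assum:compact-initial-conds}(d) there is a neighbourhood $N_x$ of $x$ such that $\lambda(\Phi_y(s_1+\eta/2),\Phi_x(s_1+\eta/2))<\delta_x/2$ for $y\in N_x$, hence for every $y\in N_x$ there is $\psi\in\Phi_y(s_1+\eta)$ with $B(\psi,\delta_x/2)\subseteq G$. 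Cover the compact set $A$ by finitely many $N_{x_1},\dots,N_{x_k}$ and set $r:=\min_i\delta_{x_i}/2>0$; then for \emph{every} $y\in A$ there is $\psi_y\in\Phi_y(s_1+\eta)$ with $B(\psi_y,r)\subseteq G$, and applying \eqref{eq:fwuldp-lower} with $s_0=s_1+\eta$, $\delta=r$ at $\varphi=\psi_y$ gives $\liminf_{\e\to0}\inf_{y\in A}a(\e)\log\Pro(X^\e_y\in G)\ge-(s_1+\eta)$. Let $\eta\downarrow0$.

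For \emph{DZULDP $\Rightarrow$ FWULDP}, the idea is to argue by contradiction and feed the DZULDP not the given set $A$ but an auxiliary compact set $A^*=\{x_n:n\ge N\}\cup\{x^*\}$ extracted from a violating sequence. For \eqref{eq:fwuldp-upper} I would first partition $[0,s_0]$ into intervals of length $\eta$ and use $\Phi_x(s)\supseteq\Phi_x(k\eta)$ for $s\ge k\eta$ to reduce to a fixed level, i.e.\ to showing $\limsup_{\e\to0}\sup_{x\in A}a(\e)\log\Pro(\dist(X^\e_x,\Phi_x(s_1))\ge\delta)\le-s_1$ for each fixed $s_1\ge0$. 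If this failed there would be $\gamma>0$, $\e_n\downarrow0$, $x_n\in A$ with $a(\e_n)\log\Pro(\dist(X^{\e_n}_{x_n},\Phi_{x_n}(s_1))\ge\delta)>-s_1+\gamma$; pass to $x_n\to x^*\in A$, use Assumption~\ref{assum:compact-initial-conds}(d) to get $\lambda(\Phi_{x_n}(s_1),\Phi_{x^*}(s_1))<\delta/2$ for $n\ge N$, so that $\{\dist(X^{\e_n}_{x_n},\Phi_{x_n}(s_1))\ge\delta\}\subseteq\{X^{\e_n}_{x_n}\in F\}$ for the closed set $F:=\{\varphi:\dist(\varphi,\Phi_{x^*}(s_1))\ge\delta/2\}$; since $I_x(F)\ge s_1$ for every $x\in A^*$ (from the definition of $F$ at $x^*$, and from $\lambda(\Phi_{x_n}(s_1),\Phi_{x^*}(s_1))<\delta/2$ at $x_n$), \eqref{eq:dzuldp-upper} forces $\limsup_n a(\e_n)\log\Pro(X^{\e_n}_{x_n}\in F)\le-s_1$, a contradiction. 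For \eqref{eq:fwuldp-lower}, a violating sequence $(\e_n,x_n,\varphi_n)$ with $\varphi_n\in\Phi_{x_n}(s_0)$ and $a(\e_n)\log\Pro(\rho(X^{\e_n}_{x_n},\varphi_n)<\delta)+I_{x_n}(\varphi_n)<-\gamma$ has, by (i) and compactness of $A$, a subsequence with $x_n\to x^*\in A$, $\varphi_n\to\varphi^*$, $I_{x_n}(\varphi_n)\to L\in[0,s_0]$, and by (ii) $I_{x^*}(\varphi^*)\le L$; hence for any $\eta>0$ and $N$ large, $I_x(B(\varphi^*,\delta/2))\le L+\eta$ for all $x\in A^*$, and $\{X^{\e_n}_{x_n}\in B(\varphi^*,\delta/2)\}\subseteq\{\rho(X^{\e_n}_{x_n},\varphi_n)<\delta\}$. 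Applying \eqref{eq:dzuldp-lower} to the open set $B(\varphi^*,\delta/2)$ gives $\liminf_n a(\e_n)\log\Pro(\rho(X^{\e_n}_{x_n},\varphi_n)<\delta)\ge-(L+\eta)$, so $\liminf_n\big(a(\e_n)\log\Pro(\rho(X^{\e_n}_{x_n},\varphi_n)<\delta)+I_{x_n}(\varphi_n)\big)\ge-\eta$; since $\eta>0$ is arbitrary this $\liminf$ is $\ge0$, contradicting the bound $<-\gamma$.

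The step I expect to be the main obstacle is the realization that the DZULDP, which only controls $\inf_{x\in A}$ of the log-probabilities against $\sup_{x\in A}I_x(G)$ (resp.\ $\inf_{x\in A}I_x(F)$) for a \emph{fixed} set $A$, is too coarse to be applied directly to $A$ when recovering the pointwise-in-$x$ statements \eqref{eq:fwuldp-lower}--\eqref{eq:fwuldp-upper}: one must instead apply it to the small auxiliary compact sets $\{x_n\}\cup\{x^*\}$ coming from minimizing sequences, and it is precisely the Hausdorff continuity of the level sets (Assumption~\ref{assum:compact-initial-conds}(d)) together with the compactness of $A$ that lets one (a) upgrade the pointwise disjointness of $F$ and $\Phi_x(s)$ to a uniform positive distance and (b) compare $\Phi_{x_n}(s)$ with the limiting $\Phi_{x^*}(s)$. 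The finite discretization of $[0,s_0]$ needed to handle the supremum over $s$ in \eqref{eq:fwuldp-upper} is a minor additional bookkeeping step.
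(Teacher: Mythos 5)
Your proposal is correct and follows essentially the same route as the paper's proof in Section \ref{S:Proof-FWULDP=DZULDP}: compactness of $A$ plus Hausdorff continuity of the level sets drive both directions, and your auxiliary compact set $A^*=\{x_n:n\ge N\}\cup\{x^*\}$ built from a violating sequence is exactly the set $A_N$ to which the paper applies the DZULDP in Lemmas \ref{lem:dz->fw-low} and \ref{lem:dz->fw-up}. The only differences are organizational — you uniformize via the preliminary facts (i)–(iii) and a finite cover of $A$ (and discretize $[0,s_0]$) where the paper extracts convergent subsequences $x_n\to\tilde x$, $s_n\to s$ directly — and these are sound.
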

Theorem \ref{thm:FWULDP=DZULDP} is proven in Section \ref{S:Proof-FWULDP=DZULDP}.
\vspace{.3cm}

As we will show in the counterexamples (Section \ref{S:counterexamples}), the main reason that the ULP can fail if the FWULDP holds is that the test function $h: \mathcal{E} \to \mathbb{R}$ is continuous but not uniformly continuous. This observation inspires the introduction of the equicontinuous uniform Laplace principle (EULP).
A family $L\subset C(\mathcal{E})$ of functions from $\mathcal{E}$ to $\mathbb{R}$ is equibounded and equicontinuous if
\[\sup_{h \in L} \sup_{\varphi \in \mathcal{E}} |h(\varphi)| <+\infty \text{ and } \lim_{\delta \to 0} \sup_{h \in L}\sup_{\rho(\varphi, \psi)<\delta}|h(\varphi) - h(\psi)| = 0.\]
\begin{definition}[Equicontinuous uniform Laplace principle] \label{def:eulp}
  Let $\mathscr{A}$ be a collection of subsets of $\mathcal{E}_0$ and $a(\e)$ be a function converging to zero as $\e$ converges to zero. The random variables $\{X^\e_x\}$ are said to satisfy an equicontinuous uniform Laplace principle with respect to the rate functions $I_x$ with speed $a(\e)$ uniformly over $\mathscr{A}$, if for any $A \in \mathscr{A}$ and any collection $L\subset C(\mathcal{E})$ of equibounded and equicontinuous functions from $\mathcal{E}$ to $\mathbb{R}$,
  \begin{equation} \label{eq:eulp}
    \lim_{\e \to 0} \sup_{x \in A} \sup_{h \in L}  \left|a(\e) \log \E \exp \left(-\frac{h(X^\e_x)}{a(\e)} \right) + \inf_{\varphi \in \mathcal{E}} \{h(\varphi) + I_x(\varphi)\}  \right|=0.
  \end{equation}
\end{definition}

\begin{theorem} \label{thm:FWULDP=EULP}
  The EULP and the FWULDP are equivalent with no extra assumptions.
\end{theorem}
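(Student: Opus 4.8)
The plan is to prove the equivalence in two directions, exploiting the fact that the EULP is exactly the ULP strengthened to hold uniformly over equibounded, equicontinuous families of test functions, while the FWULDP is a "local" statement about probabilities of balls and distances to level sets.

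For the direction FWULDP $\Rightarrow$ EULP, I would fix $A \in \mathscr{A}$, an equibounded equicontinuous family $L \subset C(\mathcal{E})$, and a tolerance $\eta > 0$, and show that for all small $\e$ and all $x \in A$, $h \in L$, one has $|a(\e)\log\E\exp(-h(X^\e_x)/a(\e)) + \inf_\varphi\{h(\varphi)+I_x(\varphi)\}| < \eta$. The upper bound on the Laplace functional (i.e. $\liminf$ of the negative is $\geq -\inf\{h+I_x\}$, equivalently $\limsup a(\e)\log\E\exp(-h/a(\e)) \leq -\inf\{h+I_x\}$ ... need the sign right) comes from the FWULDP upper bound \eqref{eq:fwuldp-upper}: partition $\mathcal{E}$ into the sublevel set $\Phi_x(s_0)$ (for $s_0$ large enough that $\|h\|$-type terms make the contribution from $I_x > s_0$ negligible) and the shells $\{s \leq I_x < s+\text{step}\}$, estimate $\exp(-h(X^\e_x)/a(\e))$ on each piece using the modulus of continuity of $h$ and the smallness of $\Pro(\dist(X^\e_x,\Phi_x(s))\geq\delta)$. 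The matching lower bound uses \eqref{eq:fwuldp-lower}: pick $\varphi$ nearly attaining $\inf\{h+I_x\}$, note $I_x(\varphi) \leq s_0$ for suitable $s_0$, and bound $\E\exp(-h(X^\e_x)/a(\e))$ below by the contribution from the event $\{\rho(X^\e_x,\varphi)<\delta\}$, again controlling $h$ via equicontinuity. The crucial point is that every estimate depends on $h$ only through $\|h\|_{C(\mathcal{E})}$ and its modulus of continuity, both of which are controlled \emph{uniformly over $L$}; and the FWULDP limits are already uniform over $x \in A$ and $\varphi \in \Phi_x(s_0)$, $s \in [0,s_0]$. So the resulting bound is uniform over $x \in A$ and $h \in L$, which is precisely \eqref{eq:eulp}.

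For the direction EULP $\Rightarrow$ FWULDP, the strategy is to produce, for given $A$, $s_0$, $\delta$, suitable equibounded equicontinuous families $L$ and extract the FWULDP inequalities. For the upper bound \eqref{eq:fwuldp-upper}: for each $s \in [0,s_0]$ I want a bounded continuous $h$ that is $0$ on $\Phi_x(s)$ and roughly equal to some large constant $M$ on $\{\dist(\cdot,\Phi_x(s))\geq\delta\}$ — the natural choice is $h_{x,s}(\varphi) = M\big(1 \wedge \delta^{-1}\dist(\varphi,\Phi_x(s))\big)$, which is $M/\delta$-Lipschitz, hence the family $\{h_{x,s}: x \in A, s \in [0,s_0]\}$ is equibounded by $M$ and equicontinuous (uniform Lipschitz constant). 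Plugging into the EULP: $\inf_\varphi\{h_{x,s}(\varphi)+I_x(\varphi)\} \leq h_{x,s}(\psi)+I_x(\psi) \leq s$ for $\psi \in \Phi_x(s)$, while $\E\exp(-h_{x,s}(X^\e_x)/a(\e)) \geq e^{-M/a(\e)}\Pro(\dist(X^\e_x,\Phi_x(s))\geq\delta)$; taking logs and using the EULP limit yields $\limsup_\e \sup_{x,s}\big(a(\e)\log\Pro(\dist(X^\e_x,\Phi_x(s))\geq\delta) + s\big) \leq$ (something $\to 0$ as $M \to \infty$), so let $M \to \infty$. For the lower bound \eqref{eq:fwuldp-lower}: fix $x$, $\varphi \in \Phi_x(s_0)$ and use a bump-type function $h_{\varphi}(\cdot) = M\big(1\wedge\delta^{-1}\rho(\cdot,\varphi)\big)$ supported away from $\varphi$; then $\E\exp(-h_\varphi(X^\e_x)/a(\e)) \leq \Pro(\rho(X^\e_x,\varphi)<\delta) + e^{-M/a(\e)}$, and $\inf_\psi\{h_\varphi(\psi)+I_x(\psi)\} \geq \min(I_x(\varphi), \text{something growing in }M)$ once one checks the infimum over $\psi$ far from $\varphi$ is large — this uses lower semicontinuity of $I_x$ near $\varphi$, or rather just the elementary bound $\inf_\psi\{h_\varphi(\psi)+I_x(\psi)\} \geq \min(I_x(\varphi), \inf_{\rho(\psi,\varphi)\geq\delta}h_\varphi(\psi)) = \min(I_x(\varphi), M)$, wait that needs $I_x \geq 0$ only. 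Then the EULP gives $a(\e)\log(\Pro(\rho(X^\e_x,\varphi)<\delta)+e^{-M/a(\e)}) + \min(I_x(\varphi),M) \to 0$ uniformly, and splitting off the $e^{-M/a(\e)}$ term plus letting $M > s_0$ recovers \eqref{eq:fwuldp-lower}.

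The main obstacle I anticipate is bookkeeping in the FWULDP $\Rightarrow$ EULP direction: slicing $\mathcal{E}$ by level sets of $I_x$ and summing the exponential estimates requires care because there are infinitely many shells, so one needs a finite partition $0 = s_1 < s_2 < \cdots < s_k = s_0$ with mesh small compared to the modulus of continuity of $L$ at scale $\delta$, plus a tail term for $\{I_x > s_0\}$ handled by $\exp(-h/a(\e)) \leq e^{\|h\|/a(\e)}$ and the upper bound \eqref{eq:fwuldp-upper} at $s = s_0$. One must also be slightly careful about the sign conventions (the ULP/EULP uses $\E\exp(-h/a(\e))$ and $+\inf\{h+I_x\}$, matching \eqref{eq:LP}) and about the case $\inf\{h+I_x\} = +\infty$, i.e. $I_x \equiv +\infty$, which forces $X^\e_x$ to be "super-exponentially rare" everywhere — this degenerate case should be checked separately or ruled out by the $s_0$-truncation argument. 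A secondary subtlety is that in the lower-bound half of the EULP $\Rightarrow$ FWULDP direction we need the estimate to be uniform over $\varphi \in \Phi_x(s_0)$ simultaneously with uniformity over $x \in A$; this is handled by taking the single equibounded equicontinuous family $L = \{h_\varphi : \varphi \in \bigcup_{x \in A}\Phi_x(s_0)\}$ indexed by points $\varphi$, which is uniformly $M/\delta$-Lipschitz regardless of any compactness of that union — so, pleasingly, no compactness assumption is needed anywhere, consistent with the statement.
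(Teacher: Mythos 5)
Your FWULDP~$\Rightarrow$~EULP direction matches the paper's argument (a near-minimizer plus a $\delta$-ball for the lower bound; a finite shell decomposition by $\dist(\cdot,\Phi_x(k\eta/2))$ for the upper bound, with every constant controlled only by the common bound and modulus of continuity of $L$), so that half is fine. The genuine gap is in the converse direction, in your treatment of the FWULDP upper bound \eqref{eq:fwuldp-upper}: the test function $h_{x,s}(\varphi)=M\bigl(1\wedge\delta^{-1}\dist(\varphi,\Phi_x(s))\bigr)$ is inverted, and the resulting estimate is vacuous. From your $h_{x,s}$ you only get $\E\exp(-h_{x,s}(X^\e_x)/a(\e))\geq e^{-M/a(\e)}\,\Pro(\dist(X^\e_x,\Phi_x(s))\geq\delta)$, which after taking logarithms and applying the EULP yields $a(\e)\log\Pro(\dist(X^\e_x,\Phi_x(s))\geq\delta)\leq -\inf_\varphi\{h_{x,s}(\varphi)+I_x(\varphi)\}+M+o(1)$. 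Since $h_{x,s}$ vanishes on $\Phi_x(s)$ and $\inf_\varphi I_x(\varphi)=0$ (Lemma \ref{lem:inf-I}), the infimum on the right is $0$, so the bound reads $a(\e)\log\Pro(\cdot)\leq M+o(1)$: it gets worse, not better, as $M\to\infty$, contrary to your claim that the error tends to $0$. The fix is the complementary plateau function $h(\psi)=j-j\min\{\dist(\psi,\Phi_x(s))/\delta,1\}$ with a single fixed $j>s_0$: then $h\equiv 0$ on $\{\dist(\cdot,\Phi_x(s))\geq\delta\}$, so $\E\exp(-h(X^\e_x)/a(\e))\geq\Pro(\dist(X^\e_x,\Phi_x(s))\geq\delta)$ with no exponential loss, while $\inf_\varphi\{h(\varphi)+I_x(\varphi)\}\geq s$ because either $\varphi\in\Phi_x(s)$, whence $h(\varphi)=j>s$, or $I_x(\varphi)>s$. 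The family indexed by $x\in A$, $s\in[0,s_0]$ is still equibounded and $j/\delta$-Lipschitz, so the EULP applies and gives \eqref{eq:fwuldp-upper} directly, with no limit in $M$.

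A secondary issue in the lower-bound half of EULP~$\Rightarrow$~FWULDP: your inequality $\inf_\psi\{h_\varphi(\psi)+I_x(\psi)\}\geq\min(I_x(\varphi),M)$ is false, since for $\psi$ inside the $\delta$-ball around $\varphi$ nothing bounds $I_x(\psi)$ below by $I_x(\varphi)$ (lower semicontinuity does not help; $I_x(\psi)$ may be $0$). Fortunately that bound is not what the argument needs. What is needed is the trivial upper bound $\inf_\psi\{h_\varphi(\psi)+I_x(\psi)\}\leq h_\varphi(\varphi)+I_x(\varphi)=I_x(\varphi)$; combined with $\E\exp(-h_\varphi(X^\e_x)/a(\e))\leq e^{-M/a(\e)}+\Pro(\rho(X^\e_x,\varphi)<\delta)$ and the EULP, it gives $\max\{-M,\,a(\e)\log\Pro(\rho(X^\e_x,\varphi)<\delta)\}\geq -I_x(\varphi)-o(1)$, and since $M>s_0\geq I_x(\varphi)$ the maximum must be attained by the probability term, which is exactly \eqref{eq:fwuldp-lower}. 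Your choice of the family $L=\{h_\varphi:\varphi\in\bigcup_{x\in A}\Phi_x(s_0)\}$ and your observation that uniform Lipschitzness makes any compactness hypothesis unnecessary are both correct and agree with the paper.
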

Theorem \ref{thm:FWULDP=EULP} is proven in Section \ref{S:Proof-FWULDP=EULP}.
\vspace{.3cm}

Now that we have established the equality of the EULP and the FWULDP, we present some sufficient conditions that imply the EULP when $X^\e_x$ can be written as measurable mappings of an infinite dimensional Wiener process. This setting is inspired by the weak convergence approach of Budhiraja, Dupuis, and Maroulas \cite{bdm-2008}, but requires some modifications when we require uniformity over subsets of $\mathcal{E}_0$ that are not compact.

Let $\beta = \{\beta_k(t)\}_{k=1}^\infty$ be a collection of i.i.d. one-dimensional Brownian motions on a filtered probability  $(\Omega,\mathcal{F}, \{\mathcal{F}_t\},\Pro)$. Define the space $\mathbb{R}^\infty$ to be the space of sequences of real numbers endowed with the metric of componentwise convergence. Fix some $T>0$ and let $C([0,T]:\mathbb{R}^\infty)$ be the space of continuous functions from $[0,T] \to \mathbb{R}^\infty$ endowed with the metric of uniform convergence in time. $\beta$ is $C([0,T]:\mathbb{R}^\infty)$-valued with probability one. Let $U \subset \mathbb{R}^\infty$ be the subspace
\[U = \left\{u = \{u_k\}_{k=1}^\infty \in \mathbb{R}^\infty: \sum_{k=1}^\infty u_k^2 < +\infty\right\}.\]
When endowed with the inner product $\left<u,v\right>_U := \sum_{k=1}^\infty u_kv_k$, $U$ is a separable Hilbert space. Let $L^2([0,T]:U)$ be the set of twice differentiable $U$-valued functions on $[0,T]$ endowed with the norm
\[|u|_{L^2([0,T]:U)}^2 = \int_0^T |u(s)|_U^2 ds.\]
Let $\mathcal{P}_2$ be the collection of $\mathcal{F}_t$-adapted $U$-valued processes $u(t)$ with the property that $\Pro(|u|_{L^2([0,T]:U)}<+\infty) = 1$.
Let\\ $\mathcal{S}^N = \{u \in L^2([0,T]:U): |u|_{L^2([0,T]:U)}^2\leq N\}.$ Let $\mathcal{P}_2^N$ be the collection of $\mathcal{F}_t$-adapted $U$-valued processes $u(t)$ such that $\Pro(u \in \mathcal{S}^N)=1.$

For $\e>0$ and $x \in \mathcal{E}_0$, let $\mathscr{G}^\e_x$ be measurable maps from $ C([0,T]:\mathbb{R}^\infty) \to \mathcal{E}$. In this section we establish a set of conditions on $\mathscr{G}^\e_x$ guaranteeing that $X^\e_x = \mathscr{G}^\e_x(\sqrt{\e} \beta)$ satisfies an EULP.

\begin{assumption} \label{assum:mathcal-G}
  Assume that for any $x \in \mathcal{E}_0$, there exists a measurable mapping $\mathscr{G}^0_x: C([0,T]:\mathbb{R}^\infty) \to \mathcal{E}$ and a collection $\mathscr{A}$ of subsets of $\mathcal{E}_0$ such that for any $A \in \mathscr{A}$, $N>0$ and $\delta>0$,
    \[\lim_{\e \to 0} \sup_{x \in A}\sup_{u \in \mathcal{P}_2^N}\Pro\left( \rho \left(\mathscr{G}_x^\e\left( \sqrt{\e}\beta(\cdot) + \int_0^\cdot u(s)ds \right) , \mathscr{G}^0_x \left( \int_0^\cdot u(s)ds \right) \right)>\delta\right)=0.\]
\end{assumption}
Define the rate functions  $I_x: \mathcal{E} \to \mathbb{R}$ for $x \in \mathcal{E}_0$ by
\begin{equation} \label{eq:rate-fct}
  I_x(\varphi) = \inf \left\{\frac{1}{2}\int_0^T |u(s)|_U^2 ds : \varphi = \mathscr{G}^0_x\left( \int_0^\cdot u(s)ds \right) \right\}.
\end{equation}
The infimum is taken over all $u$ in $L^2([0,T]:U)$. We use the convention that the infimum of the empty set is $+\infty$.
\begin{remark} \label{rem:good}
If for fixed $x \in \mathcal{E}_0$ and any $N>0$, the level set
    \[\Phi_x(N)=\left\{\varphi \in \mathcal{E}: I_x(\varphi) \leq N \right\}=\left\{\mathscr{G}^0_x\left( \int_0^\cdot u(s)ds\right) : u \in \mathcal{S}^{2N} \right\}\]
    is a compact subset of $\mathcal{E}$, then $I_x$ is a good rate function.
\end{remark}
\begin{theorem} \label{thm:eulp-sufficient}
  Under Assumption \ref{assum:mathcal-G}, the $\mathcal{E}$-valued random variables $X^\e_x = \mathscr{G}^\e_x(\sqrt{\e}\beta)$ satisfy an EULP with respect to the rate function $I_x$ with speed $a(\e)=\e$ uniformly over $\mathscr{A}$.
\end{theorem}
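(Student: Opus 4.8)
The plan is to verify the EULP directly from Assumption \ref{assum:mathcal-G} by way of the variational representation for functionals of the infinite dimensional Wiener process $\beta$, combined with the uniform convergence in probability hypothesis. Fix $A \in \mathscr{A}$ and a collection $L \subset C(\mathcal{E})$ of equibounded, equicontinuous functions, say with $\sup_{h \in L}\|h\|_{C(\mathcal{E})} \leq M$. For each $h \in L$, the Boué--Dupuis / Budhiraja--Dupuis variational formula (as in \cite{bd-1998,bd-2000,bdm-2008}) gives
\begin{equation}
-\e \log \E \exp\left(-\frac{h(\mathscr{G}^\e_x(\sqrt{\e}\beta))}{\e}\right) = \inf_{u \in \mathcal{P}_2}\E\left[\frac{1}{2}\int_0^T |u(s)|_U^2\,ds + h\left(\mathscr{G}^\e_x\left(\sqrt{\e}\beta + \int_0^\cdot u(s)\,ds\right)\right)\right].
\end{equation}
Call this quantity $F^\e_x(h)$, and let $F^0_x(h) := \inf_{\varphi \in \mathcal{E}}\{h(\varphi) + I_x(\varphi)\}$ be the target. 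The goal is to show $\sup_{x\in A}\sup_{h\in L}|F^\e_x(h) - F^0_x(h)| \to 0$, and I would do this by proving the two one-sided bounds (Laplace upper and lower, i.e., lower and upper bounds on $F^\e_x(h)$) separately and uniformly.

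For the upper bound on $F^\e_x(h)$ (the "easy" direction): given $\eta > 0$, for each $x$ choose a near-minimizer $\varphi_x$ of $h(\varphi) + I_x(\varphi)$ and a deterministic control $u_x \in L^2([0,T]:U)$ with $\frac12\int_0^T|u_x|_U^2 \leq I_x(\varphi_x) + \eta$ and $\mathscr{G}^0_x(\int_0^\cdot u_x) = \varphi_x$ (shrinking to a finite-energy ball, so $u_x \in \mathcal{S}^N$ for $N$ depending only on $M$ and $\eta$, since large-energy controls are irrelevant once $|h| \leq M$). Plugging $u_x$ into the variational representation and using equicontinuity of $L$ together with Assumption \ref{assum:mathcal-G} — which controls $\rho(\mathscr{G}^\e_x(\sqrt\e\beta + \int u_x), \mathscr{G}^0_x(\int u_x))$ uniformly in $x$ and $u$ — one gets $F^\e_x(h) \leq h(\varphi_x) + I_x(\varphi_x) + \eta + o(1) \leq F^0_x(h) + 2\eta + o(1)$, with the $o(1)$ uniform over $x \in A$ and $h \in L$. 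Here the boundedness of $h$ converts the "$>\delta$" probability event into a vanishing contribution; equicontinuity converts "$\le \delta$" into a uniformly small perturbation of $h$-values.

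For the lower bound on $F^\e_x(h)$ (the genuinely harder direction): take an $\eta$-near-minimizing random control $u^\e_x \in \mathcal{P}_2$. A standard argument shows one may assume $\frac12\int_0^T|u^\e_x|_U^2 \leq N$ a.s. for $N = N(M)$, so $u^\e_x \in \mathcal{P}_2^N$. Then write $h(\mathscr{G}^\e_x(\sqrt\e\beta + \int u^\e_x)) \geq h(\mathscr{G}^0_x(\int u^\e_x)) - \omega_L(\delta) - 2M\mathbbm{1}_{\{\rho(\cdots)>\delta\}}$, where $\omega_L$ is the common modulus of continuity of $L$; bound the $h(\mathscr{G}^0_x(\int u^\e_x))$ term below by $-I_x(\mathscr{G}^0_x(\int u^\e_x)) \geq -\frac12\int_0^T|u^\e_x|_U^2$ pathwise from the definition \eqref{eq:rate-fct}, so that the energy terms cancel up to the target infimum, and take expectations using Assumption \ref{assum:mathcal-G} to kill the indicator term uniformly. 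This yields $F^\e_x(h) \geq F^0_x(h) - \omega_L(\delta) - o(1)$ uniformly; then let $\delta \to 0$.

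The main obstacle I expect is the bookkeeping in the lower bound — specifically, justifying the a.s. energy truncation $u^\e_x \in \mathcal{P}_2^N$ uniformly in $x$ and $h$ (truncating the control and estimating the error using $\|h\|_\infty \le M$ only, so $N$ is independent of $x$), and making sure the estimate $I_x(\mathscr{G}^0_x(\int_0^\cdot u^\e_x(s)\,ds)) \leq \frac12\int_0^T |u^\e_x(s)|_U^2\,ds$ holds pathwise a.s. given that $u^\e_x$ is a progressively measurable process rather than a deterministic element of $L^2$ (a measurable-selection / Fubini point that the weak-convergence literature handles, but here must be carried through the convergence-in-probability framework). Everything else is a matter of combining the variational representation, the uniform equicontinuity of $L$, the boundedness bound $|h| \le M$, and Assumption \ref{assum:mathcal-G}, then sending $\e \to 0$ followed by $\delta, \eta \to 0$.
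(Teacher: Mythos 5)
Your proposal is correct and follows essentially the same route as the paper: the Boué--Dupuis/Budhiraja--Dupuis variational representation for the prelimit Laplace functional, the variational form \eqref{eq:variational-rate-fct} of the rate function, truncation of near-optimal controls to $\mathcal{P}_2^N$ (resp.\ $\mathcal{S}^N$) with $N$ depending only on $\sup_{h\in L}\|h\|_{C(\mathcal{E})}$, and then equiboundedness plus equicontinuity to reduce both one-sided bounds to the uniform convergence in probability of Assumption \ref{assum:mathcal-G}. The bookkeeping points you flag (the a.s.\ energy truncation, via the localization of \cite{bd-2000}, and the pathwise inequality $\tfrac12\int_0^T|u(s)|_U^2\,ds + h(\mathscr{G}^0_x(\int_0^\cdot u)) \geq \inf_{\varphi}\{h(\varphi)+I_x(\varphi)\}$ for random $u$) are exactly how the paper closes the argument.
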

The proof is presented in Section \ref{S:EULP}.
\vspace{.3cm}

The main difference between the weak convergence approach of \cite{bdm-2008} and Assumption \ref{assum:mathcal-G} is that the weak convergence approach requires that the mapping $(\e, x, u) \mapsto \mathscr{G}^\e_x\left( \sqrt{\e}\beta + \int_0^\cdot u(s)ds \right)$ be jointly continuous in an appropriate topology and that the $x$ belong to a compact set. When these continuity and compactness conditions are met, Assumption \ref{assum:mathcal-G} will follow. The EULP approach, on the other hand, does not require any continuity in $x$ or $u$. Instead, we merely require that the convergence of $\mathscr{G}^\e_x$ to $\mathscr{G}^0_x$ in probability must be uniform with respect to $x$ and $u$. In Section \ref{S:example} we show how this theory can be applied to prove that a family of Hilbert space valued stochastic equations exposed to small multiplicative noise satisfies an EULP that is uniform over initial conditions in bounded subsets of an infinite dimensional Hilbert space. The weak convergence approach cannot be used for such an example because bounded subsets of infinite dimensional Hilbert spaces are not generally compact.

\section{Counterexamples} \label{S:counterexamples}
Before proving the main results of the paper, we illustrate why Assumptions \ref{assum:compact-level-union} and \ref{assum:compact-initial-conds} are needed for Theorems \ref{thm:FWULDP=ULP} and \ref{thm:FWULDP=DZULDP} to hold. Using a simple example, we can demonstrate the FWULDP is not equivalent to the DZULDP or ULP.

The first counterexample is the simplest possible small noise equation $X^\e_x(t) := x + \sqrt{\e} W(t)$, where $W(t)$ is a one-dimensional Brownian motion and the initial condition $x \in \mathbb{R}$. For any $T>0$ let $C([0,T])$ be the space of continuous functions from $[0,T] \to \mathbb{R}$. We will consider the trajectories of $X^\e_x$ as $\mathcal{E} = C([0,T])$--valued random variables.  Let $|\varphi|_{C([0,T])} = \sup_{t \in [0,T]} |\varphi(t)|$ denote the supremum norm. For any $\varphi \in C([0,T])$ and $B \subset C([0,T])$, let $\dist(\varphi, B) = \inf_{\psi \in B} |\varphi - \psi|_{C([0,T])}$.

It is standard that for any $T>0$, the processes $\{\sqrt{\e}W(\cdot): \e>0\}$ satisfy a large deviations principle in $C([0,T])$ with rate function $ I_0: C([0,T]) \to \mathbb{R}$ given by
\[ I_0(\varphi) = \inf\left\{\frac{1}{2}\int_0^T |u(s)|^2ds: \varphi(t) = \int_0^t u(s)ds \right\}.\]
The infimum is taken over all $u \in L^2([0,T])$ and $I_0(\varphi) = +\infty$ if $\varphi$ cannot be written as $\varphi(t) = \int_0^tu(s)ds$ (meaning $\varphi$ is not absolutely continuous). Let $\Phi_0(s) = \{\varphi \in \mathcal{E}: I_0(\varphi) \leq s\}$ be the level sets of $I_0$.  We state this result without proof in the next theorem.

\begin{theorem}[Theorems 3.2.1-3.2.2 of \cite{fw}] \label{thm:W-LDP}
  For any fixed $T>0$, $\{\sqrt{\e} W\}_{\e>0}$ satisfies a large deviations principle with respect to the rate function $I_0$ with speed $a(\e) = \e$. In particular,
  \begin{enumerate}
    \item For any $\delta>0$ and $s_0>0$,
    \begin{equation} \label{eq:eW-lower}
      \liminf_{\e\to 0} \inf_{\varphi \in \Phi_0(s_0)} \left( \e \log \Pro(|\sqrt\e W - \varphi|_{C([0,T])} < \delta) + I_0(\varphi)\right) \geq 0.
    \end{equation}
    \item For any $\delta>0$ and $s_0>0$
    \begin{equation} \label{eq:eW-upper}
      \limsup_{\e\to 0} \sup_{s \in [0, s_0]} \left(\e \log \Pro(\dist(\sqrt\e W, \Phi_0(s))\geq \delta) + s \right) \leq 0.
    \end{equation}

  \end{enumerate}
\end{theorem}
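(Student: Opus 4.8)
The plan is to recognize that this statement is simply the Freidlin–Wentzell reformulation (equations \eqref{eq:FWLDP-lower}–\eqref{eq:FWLDP-upper}) of the classical Schilder-type large deviations principle for $\{\sqrt\varepsilon W\}$ in $C([0,T])$, restricted to the single fixed initial condition $x=0$. Since the excerpt already records that the classical LDP \eqref{eq:LDP-open}–\eqref{eq:LDP-closed} is equivalent to the Freidlin–Wentzell form \eqref{eq:FWLDP-lower}–\eqref{eq:FWLDP-upper} whenever the rate function is good, the only real content is: (i) $\{\sqrt\varepsilon W\}$ satisfies \eqref{eq:LDP-open}–\eqref{eq:LDP-closed} with rate function $I_0$ and speed $a(\varepsilon)=\varepsilon$; and (ii) $I_0$ is a good rate function. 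Both of these are standard (Schilder's theorem), and the theorem statement itself attributes the result to \cite[Theorems 3.2.1–3.2.2]{fw}, so I would cite that rather than reprove it.

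First I would recall Schilder's theorem: the Gaussian measures $\mathbb{P}(\sqrt\varepsilon W \in \cdot)$ on $C([0,T])$ obey an LDP with speed $\varepsilon$ and good rate function $I_0(\varphi)=\tfrac12\int_0^T|\dot\varphi(s)|^2\,ds$ for $\varphi$ in the Cameron–Martin space $H^1_0([0,T])$ (i.e.\ absolutely continuous with $\varphi(0)=0$ and square-integrable derivative), and $I_0(\varphi)=+\infty$ otherwise. The goodness of $I_0$ follows because the level set $\Phi_0(s)=\{\varphi:\|\dot\varphi\|_{L^2}^2\le 2s,\ \varphi(0)=0\}$ is a bounded, equicontinuous (via Cauchy–Schwarz, $|\varphi(t)-\varphi(r)|\le\sqrt{2s}\,|t-r|^{1/2}$), closed subset of $C([0,T])$, hence compact by Arzelà–Ascoli.

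Next I would invoke the equivalence \cite[Theorem 3.3.3]{fw} cited in the introduction: for a good rate function, the pair \eqref{eq:LDP-open}–\eqref{eq:LDP-closed} is equivalent to the pair \eqref{eq:FWLDP-lower}–\eqref{eq:FWLDP-upper}. Applying this equivalence to the family $\{\sqrt\varepsilon W\}$ and the rate function $I_0$ yields exactly \eqref{eq:eW-lower} and \eqref{eq:eW-upper}, which is the claim. Since no parameter $x$ is varying here, there is no uniformity issue to address and the proof is purely a citation plus the two-line verification of goodness.

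There is essentially no obstacle in this theorem; it is a recollection of a classical fact stated as a reference point for the counterexamples that follow. The only thing to be careful about is to state it in the Freidlin–Wentzell $\liminf/\limsup$ form with the correct quantifiers over $\delta$ and $s_0$, and to make sure $I_0$ is verified to be good so that the equivalence \cite[Theorem 3.3.3]{fw} applies; both are routine. Accordingly, I would present this as: ``This is a restatement of Schilder's theorem \cite[Theorems 3.2.1–3.2.2]{fw} in the form \eqref{eq:FWLDP-lower}–\eqref{eq:FWLDP-upper}, using that $I_0$ is a good rate function and the equivalence \cite[Theorem 3.3.3]{fw}.''
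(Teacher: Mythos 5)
Your proposal is correct and matches the paper exactly: the paper states this result without proof, citing Theorems 3.2.1--3.2.2 of \cite{fw}, which is precisely the citation-plus-routine-verification you give. The extra details you supply (goodness of $I_0$ via Arzel\`a--Ascoli and the equivalence \cite[Theorem 3.3.3]{fw}) are correct but not present in the paper, which simply records the classical fact as a reference point.
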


The next theorem shows that the processes $X^\e_x(t)$ satisfy a uniform large deviations principle that is uniform over any measurable subset of $\mathbb{R}$ with respect to the rate function
\begin{equation} \label{eq:BM-rate-fct}
  I_x(\varphi) = \inf \left\{\frac{1}{2}\int_0^T |u(s)|^2 ds: \varphi(t) = x + \int_0^t u(s)ds \right\}.
\end{equation}
Let $\Phi_x(s) = \{\varphi \in C([0,T]): I_x(\varphi) \leq s\}$.
\begin{theorem} \label{thm:BM-FWULDP}
  Let $\mathscr{A}$ be the collection of all subsets of $\mathbb{R}$. Let $T>0$ The process $\{X^\e_x\}$ satisfies a FWULDP in $\mathcal{E} = C([0,T])$ with respect to the good rate functions $I_x$
  and speed $a(\e) = \e$ uniformly over $\mathscr{A}$. That is
  \begin{enumerate}
    \item For any $A \in \mathscr{A}$, $\delta>0$, and $s_0>0$,
    \begin{equation} \label{eq:BM-ULDP-lower}
      \liminf_{\e \to 0} \inf_{x \in A} \inf_{\varphi \in \Phi_x(s_0)} \left( \e \log \Pro(|X^\e_x - \varphi|_{C([0,T])} <\delta) + I_x(\varphi) \right) \geq 0.
    \end{equation}
    \item For any $A \in \mathscr{A}$, $\delta>0$ and $s_0>0$,
    \begin{equation} \label{eq:BM-ULDP-upper}
      \limsup_{\e \to 0} \sup_{x \in A} \sup_{s \in [0,s_0]} \left( \e \log \Pro(\dist(X^\e_x, \Phi_x(s))\geq\delta) + s \right) \leq 0.
    \end{equation}
  \end{enumerate}
\end{theorem}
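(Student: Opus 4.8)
The plan is to exploit the exact translation structure of the problem: $X^\e_x = x + \sqrt{\e}W$, so everything reduces to Theorem \ref{thm:W-LDP} after a constant shift. First I would record the two algebraic identities that drive the argument. Since constant shifts $\varphi \mapsto \varphi + c$ are isometries of $C([0,T])$ with the sup norm, and since from \eqref{eq:BM-rate-fct} we have $I_x(\varphi) = I_0(\varphi(\cdot) - x)$, it follows that $\Phi_x(s) = x + \Phi_0(s) := \{x + \psi : \psi \in \Phi_0(s)\}$ for every $s \geq 0$. In particular $\Phi_x(s)$ is a translate of the compact set $\Phi_0(s)$ (compactness of $\Phi_0(s)$ being the classical fact that Cameron--Martin balls are precompact in $C([0,T])$, e.g.\ by Arzel\`a--Ascoli), hence compact, so each $I_x$ is indeed a good rate function.

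For the lower bound \eqref{eq:BM-ULDP-lower}, fix $A \in \mathscr{A}$, $\delta > 0$, $s_0 > 0$, and write an arbitrary $\varphi \in \Phi_x(s_0)$ as $\varphi = x + \psi$ with $\psi \in \Phi_0(s_0)$. Then
\[
|X^\e_x - \varphi|_{C([0,T])} = |x + \sqrt{\e}W - x - \psi|_{C([0,T])} = |\sqrt{\e}W - \psi|_{C([0,T])},
\qquad I_x(\varphi) = I_0(\psi),
\]
so the expression inside the double infimum is independent of $x$ and equals the one in \eqref{eq:eW-lower}. Therefore
\[
\inf_{x \in A} \inf_{\varphi \in \Phi_x(s_0)} \Big( \e \log \Pro(|X^\e_x - \varphi|_{C([0,T])} < \delta) + I_x(\varphi)\Big)
= \inf_{\psi \in \Phi_0(s_0)} \Big( \e \log \Pro(|\sqrt{\e}W - \psi|_{C([0,T])} < \delta) + I_0(\psi)\Big),
\]
and taking $\liminf_{\e \to 0}$ and invoking Theorem \ref{thm:W-LDP}(1) gives \eqref{eq:BM-ULDP-lower}.

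For the upper bound \eqref{eq:BM-ULDP-upper}, the same shift gives, for each $s$,
\[
\dist(X^\e_x, \Phi_x(s)) = \dist(x + \sqrt{\e}W, \, x + \Phi_0(s)) = \dist(\sqrt{\e}W, \Phi_0(s)),
\]
again independent of $x$, so
\[
\sup_{x \in A} \sup_{s \in [0,s_0]} \Big( \e \log \Pro(\dist(X^\e_x, \Phi_x(s)) \geq \delta) + s\Big)
= \sup_{s \in [0,s_0]} \Big( \e \log \Pro(\dist(\sqrt{\e}W, \Phi_0(s)) \geq \delta) + s\Big),
\]
and $\limsup_{\e \to 0}$ together with Theorem \ref{thm:W-LDP}(2) yields \eqref{eq:BM-ULDP-upper}. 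There is essentially no hard step here: the only thing one must be careful about is to make the translation bookkeeping explicit enough that it is visibly clear that the $\sup_{x \in A}$ and $\inf_{x \in A}$ are vacuous — every quantity involved is genuinely $x$-free — which is precisely why uniformity holds over \emph{all} subsets of $\mathbb{R}$, compact or not. This is the point that the later counterexamples (Theorems \ref{thm:counter-BM-DZ}, \ref{thm:ulp-counter}) will contrast with.
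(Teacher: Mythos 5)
Your proposal is correct and follows essentially the same route as the paper: reduce everything to $\sqrt{\e}W$ via the translation $\varphi \mapsto \varphi - x$, observe that $I_x(\varphi) = I_0(\varphi - x)$ and $\dist(X^\e_x,\Phi_x(s)) = \dist(\sqrt{\e}W,\Phi_0(s))$ make the $x$-dependence vanish, and invoke Theorem \ref{thm:W-LDP}. The only (harmless) difference is that you make the $x$-freeness an explicit equality and add the Arzel\`a--Ascoli remark on goodness of $I_x$, whereas the paper states the corresponding steps as inequalities.
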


\begin{proof}
  It is sufficient to prove this theorem with $A= \mathbb{R}$. If \eqref{eq:BM-ULDP-lower} and \eqref{eq:BM-ULDP-upper} hold with $A = \mathbb{R}$, then they also hold for any subset of $\mathbb{R}$. Fix $s_0>0$.
  For any $x\in\RR$, the elements of $\Phi_x(s_0)$ are translations of elements of $\Phi_0(s_0)$. In particular, for any $\varphi \in \Phi_x(s_0)$, $\psi(t) := \varphi(t) - x$ is in $\Phi_0(s_0)$ and $I_x(\varphi) = I_0(\psi)$. Similarly, $X^\e_x(t) - x = \sqrt{\e}W(t)$. Therefore, for any $\varphi \in \Phi_x(s_0)$, and $\psi(t) = \varphi(t) - x$,
  it follows that $|X^\e_x - \varphi|_{C([0,T])} = |\sqrt{\e}W - \psi|_{C([0,T])}$.
  Therefore,
  \begin{align*}
     &\liminf_{\e \to 0} \inf_{x \in \RR} \inf_{\varphi \in \Phi_x(s_0)} \left(\e \log \Pro(|X^{\e}_{x} - \varphi|_{C([0,T])} < \delta) + I_x(\varphi) \right)\\
     &\geq \liminf_{\e \to 0}  \inf_{\psi \in \Phi_0(s_0)} \left( \e \log \Pro(\sqrt{\e}W - \psi|_{C([0,T])} < \delta) + I_0 (\psi)\right)\\
     &\geq 0
  \end{align*}
  where the last line follows from \eqref{eq:eW-lower}. Therefore, the FWULDP lower bound \eqref{eq:BM-ULDP-lower} holds.

  The upper bound is similar. Because of the definitions of $X^\e_x$ and the rate functions,
  \[\left\{\dist(X^\e_x, \Phi_x(s)) \geq \delta \right\} = \left\{ \dist(\sqrt{\e} W, \Phi_0(s))\geq \delta \right\}.\]
  Therefore
  \begin{align*}
    &\limsup_{\e \to 0} \sup_{x \in \mathbb{R}} \sup_{s \in[0, s_0]} \left(\e \log \Pro(\dist(X^\e_x, \Phi_x(s)) \geq \delta) + s \right)\\
    &= \limsup_{\e \to 0} \sup_{s \in[0, s_0]} \left(\e \log \Pro(\dist(\sqrt{\e} W,  \Phi_0(s))\geq \delta) + s \right)\\
    &\leq 0.
  \end{align*}
  The last line follows from \eqref{eq:eW-upper}.
\end{proof}

While the process $X^\e_x(t) = x + \sqrt{\e}W(t)$ satisfy a FWULDP in $C([0,T])$ that is uniform over initial conditions in all of $\mathbb{R}$, they do not satisfy a DZULDP or ULP over all initial conditions in $\mathbb{R}$. It is clear that Assumption \ref{assum:compact-initial-conds} cannot  hold when the set $A$ of initial conditions is not a compact set and Assumption \ref{assum:compact-level-union} cannot hold when the set $A$ of initial conditions is not a precompact set.
\begin{theorem} \label{thm:counter-BM-DZ}
  The process $X^\e_x$ does not satisfy a DZULDP when $\mathscr{A}$ contains all subsets of $\mathbb{R}$.
\end{theorem}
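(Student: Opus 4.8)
The plan is to \emph{disprove} the DZULDP by producing a single $A\in\mathscr{A}$ and a single open set $G\subset C([0,T])$ for which the lower bound \eqref{eq:dzuldp-lower} fails; since $\mathscr{A}$ contains every subset of $\RR$, I may take $A=\RR$. The phenomenon to exploit is exactly the one advertised in the introduction: when $A$ is not compact, the quantity $\inf_{x\in A}\Pro(X^\e_x\in G)$ can equal $0$ even though $\Pro(X^\e_x\in G)\to 1$ for each fixed $x$, and even though $\sup_{x\in A} I_x(G)=0$. Throughout I work with the speed $a(\e)=\e$ and the rate functions $I_x$ of \eqref{eq:BM-rate-fct}.

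Concretely, I would set
\[
  G=\Bigl\{\varphi\in C([0,T]):\ \sup_{t\in[0,T]}|\varphi(t)-\varphi(0)|<e^{-\varphi(0)^2}\Bigr\}.
\]
First observe that $G$ is open: $\varphi\mapsto\varphi(0)$ and $\varphi\mapsto\sup_{t}|\varphi(t)-\varphi(0)|$ are continuous on $C([0,T])$, so $G$ is the preimage of $(-\infty,0)$ under a continuous functional. Next, for every $x\in\RR$ the constant function $\varphi\equiv x$ belongs to $G$ (its oscillation is $0<e^{-x^2}$), and taking $u\equiv0$ in \eqref{eq:BM-rate-fct} shows $I_x$ vanishes at that constant function; hence $I_x(G)=0$ for all $x\in\RR$, so $\sup_{x\in\RR}I_x(G)=0$ and the right-hand side of \eqref{eq:dzuldp-lower} is $0$.

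It remains to evaluate the left-hand side. Since $X^\e_x(0)=x$ and $X^\e_x(t)-X^\e_x(0)=\sqrt\e\,W(t)$, we have
\[
  \Pro(X^\e_x\in G)=\Pro\Bigl(\sqrt\e\,\sup_{t\in[0,T]}|W(t)|<e^{-x^2}\Bigr)=\Pro\Bigl(\sup_{t\in[0,T]}|W(t)|<e^{-x^2}/\sqrt\e\Bigr).
\]
Fix $\e>0$. As $|x|\to\infty$ the threshold $e^{-x^2}/\sqrt\e$ tends to $0$, and since $\sup_{t\in[0,T]}|W(t)|\ge|W(T)|>0$ almost surely we have $\Pro(\sup_t|W(t)|<\delta)\to 0$ as $\delta\downarrow0$. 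Therefore $\inf_{x\in\RR}\Pro(X^\e_x\in G)=0$, so $\inf_{x\in\RR}\bigl(\e\log\Pro(X^\e_x\in G)\bigr)=-\infty$ for every $\e>0$, whence $\liminf_{\e\to0}\inf_{x\in\RR}\bigl(\e\log\Pro(X^\e_x\in G)\bigr)=-\infty$. This is strictly smaller than $-\sup_{x\in\RR}I_x(G)=0$, so \eqref{eq:dzuldp-lower} cannot hold and $\{X^\e_x\}$ does not satisfy a DZULDP over $\mathscr{A}$.

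The only genuinely creative step is the choice of $G$: the functions in $G$ that start at $\varphi(0)=x$ form a sup-norm ball around the constant $x$ whose radius $e^{-x^2}$ collapses to $0$ as $|x|\to\infty$. This keeps $I_x(G)=0$ for \emph{every} $x$, making the right side of \eqref{eq:dzuldp-lower} as large as possible, while still allowing $\Pro(X^\e_x\in G)$ to be driven to $0$ by sending $x$ off to infinity. Everything else — openness of $G$, the identity $I_x(G)=0$, and the elementary fact that the small-ball probability $\Pro(\sup_{[0,T]}|W|<\delta)$ vanishes as $\delta\downarrow0$ — is routine. I would also note that the same device works for any non-compact $A\subset\RR$: if $A$ is unbounded use the $G$ above, and if $A$ is bounded with a limit point $x_\ast\notin A$ replace $e^{-\varphi(0)^2}$ by $|\varphi(0)-x_\ast|$, which recovers the sharper assertion, including the bounded precompact example $A=\{2^{-n}\}_{n\in\NN}$ of Remark \ref{rem:counterexample-bounded}.
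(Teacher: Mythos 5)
Your proof is correct and rests on the same mechanism as the paper's: an open set built from balls (here, a tube of radius $e^{-\varphi(0)^2}$ around the constant started at $\varphi(0)$) whose radii collapse as the initial condition runs off in the non-compact index set, forcing $\inf_{x}\Pro(X^\e_x\in G)=0$ while $\sup_x I_x(G)$ stays finite, so the lower bound \eqref{eq:dzuldp-lower} fails. The only differences are cosmetic — a single continuously-parametrized open set instead of the paper's disjoint union over $\NN$, and you refute only the lower bound where the paper also exhibits a closed set violating \eqref{eq:dzuldp-upper}; since failure of either bound already negates the DZULDP, your argument suffices.
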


\begin{proof}
  We demonstrate that neither the lower bound \eqref{eq:dzuldp-lower} nor the upper bound \eqref{eq:dzuldp-upper} are satisfied over unbounded sets. Let $A = \NN$. For any $n \in A$, let $f_n(t) = n + t$. There is nothing special about $f_n$. The proof could use any set of functions that are just translated by initial condition. Define the open set
  \[G = \bigcup_{n \in A} \{\varphi \in C([0,T]): |\varphi - f_n|_{C([0,T])} < 2^{-n}\}.\]
  $G$ is an open set because it is the union of open sets.
  Unfortunately, because $X^\e_n(t) - f_n(t) = \sqrt{\e}W(t) - t$, for every $\e>0$,
  \begin{align*}
    &\inf_{n \in A} \Pro(X^\e_n \in G) = \inf_{n \in A} \Pro\left(\sup_{t \in [0,T]} |\sqrt{\e} W(t) -t| < 2^{-n}\right) \\
    &= \Pro\left(\sup_{t \in [0,T]} |\sqrt{\e}W(t) - t|=0\right)=0.
  \end{align*}
  It follows that for every $\e>0$
  \begin{align*}
    &\liminf_{\e \to 0} \inf_{n \in A}  \left(\e \log \Pro (X^\e_n \in G) \right) = -\infty,
  \end{align*}
  while
  $$\sup_{n\in A} I_n(G) \leq \sup_n I_n(f_n) = \int_0^T 1^2 dt = \frac{T}{2}.$$
  This analysis shows that $X^\e_x(t) = x + \sqrt{\e}W(t)$ does not satisfy \eqref{eq:dzuldp-lower}

  For the upper bound, consider the closed set
  \[F = \bigcup_{n \in A} \left\{\varphi \in C([0,T]): \varphi(0) = n \text{ and }  \dist(\varphi, \Phi_n(1))\geq 2^{-n}  \right\}.\]
  This is closed because it is a union of disjoint closed sets each of which is at least distance 1 from the others (because the initial conditions are at least distance 1 from each other). Because $\dist(X^\e_x, \Phi_x(1)) = \dist(\sqrt{\e}W, \Phi_0(1))$, it follows that
  \begin{align*}
    &\sup_{n \in A} \Pro(X^\e_n \in F) = \sup_{n \in A} \Pro(\dist(\sqrt{\e} W,  \Phi_0(1)) \geq 2^{-n}) \\
    &= \Pro(\dist(\sqrt{\e}W,  \Phi_0(1)) >0) = 1.
  \end{align*}
  The above is equal to $1$ because $\Phi_0(1)$ contains only differentiable functions and $\sqrt{\e}W$ has rough paths, so $\Pro(\sqrt{\e} W \not \in \Phi_0(1)) =1$.
  Then
  $$\sup_{n \in A} \e \log \Pro(X^\e_n \in F) = 0 \text{ and } \inf_{n \in A} I_n(F) \geq 1,$$ so the upper bound \eqref{eq:dzuldp-upper} cannot be true.
\end{proof}

\begin{remark} \label{rem:counterexample-bounded}
  An unbounded set is not even required for the above counterexample proof. The bounded set $A = \{2^{-n}\}_{n=1}^\infty$ with the open set equal to the disjoint union of open balls
  \begin{equation} \label{eq:counterexample-open-set}
    G = \bigcup_{n=1}^\infty \left\{\varphi \in C([0,T]): \sup_{t \in [0,T]}|\varphi(t) - (2^{-n} +t)|<4^{-n}\right\}
  \end{equation}
  is sufficient to prove that \eqref{eq:dzuldp-lower} does not hold via the same arguments as the proof of Theorem \ref{thm:counter-BM-DZ}. For any $x \in A$, $I_x(G)\leq \int_0^T 1^2 dt =\frac{T}{2}$, but because of the degeneracy of the open balls, $\inf_{x \in A} \Pro(X^\e_x \in G) = 0$. Therefore, the DZULDP lower bound \eqref{eq:dzuldp-lower} cannot hold. Theorem \ref{thm:FWULDP=DZULDP} truly requires compactness of the initial conditions. $A$ is pre-compact but not compact.
  Note that \eqref{eq:dzuldp-lower} does hold if $A$ is the compact set $\{2^{-n}\}_{n=1}^\infty \cup\{0\}$ because $I_0(G) = +\infty$.
\end{remark}

\begin{remark} \label{rem:counterexample-no-Hausdorff-cont}
  Even if $A$ is compact, we can build a counterexample to the DZULDP if the mapping $x \mapsto \Phi_x(s)$ is not continuous in the Hausdorff metric as in Assumption \ref{assum:compact-initial-conds}. Consider the family of processes $Z^\e_x(t) = X^\e_x(t) = x + \sqrt{\e}W(t)$ if $x \not =0$ and $Z^\e_0(t) = X^\e_{1/2}(t) = \frac{1}{2} + \sqrt{\e}W(t)$. Let $A = \{2^{-n}\}_{n=1}^\infty \cup \{0\}$, which is a compact set. Let $G$ be as in \eqref{eq:counterexample-open-set}.

  The rate function for $Z$ is $\tilde I_x = I_x$ for $x \not = 0$ and $\tilde I_0 = I_{1/2}$. Let $\tilde\Phi_x = \{\varphi \in \mathcal{E}: \tilde I_x (\varphi) \leq s\}.$
  In this case, for any $x \in A$, $\tilde I_x(G) \leq \frac{T}{2}$ but $\inf_{x \in A} \Pro(Z^\e_x \in G) = 0$.

  The fact that $A$ is compact does not help because the map $x \mapsto \tilde \Phi_x(s)$ is discontinuous at $0$ in the Hausdorff metric.
\end{remark}

The counterexample for the FWULDP--ULP equivalence is pretty much the same. We require a test function $h: C([0,T]) \to \mathbb{R}$ that is bounded and continuous, but not uniformly continuous.

\begin{theorem} \label{thm:ulp-counter}
  The processes $\{X^\e_x\}$ do not satisfy a ULP uniformly over all subsets of $\mathbb{R}$.
\end{theorem}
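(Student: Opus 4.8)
The plan is to mimic the DZULDP counterexample (Theorem \ref{thm:counter-BM-DZ}) but package the obstruction inside a single bounded continuous test function that is not uniformly continuous. As in that proof, the key structural fact is that $X^\e_x(t) - x = \sqrt{\e}W(t)$, so the law of $X^\e_x$ is just the law of $\sqrt{\e}W$ translated by the constant path $x$. Since $\sqrt{\e}W$ has nowhere-differentiable paths almost surely while every element of $\Phi_0(s)$ is absolutely continuous, the random path $X^\e_x$ is almost surely far (in a path-dependent but positive amount) from the ``translated ramp'' $t \mapsto x + t$. We exploit this by building $h$ so that it is very negative on a shrinking ball around each ramp $f_n(t) = n + t$, $n \in \NN$, but the balls shrink fast enough that $h$ is still continuous, while its oscillation near the ramps does not go to zero — hence $h$ is not uniformly continuous.

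Concretely, I would take $A = \NN$ and, for each $n$, set $r_n = 2^{-n}$ and define
\[
  h(\varphi) = -\sum_{n=1}^\infty \left( 1 - \frac{|\varphi - f_n|_{C([0,T])}}{r_n} \right)^+ ,
\]
where $(\cdot)^+$ denotes positive part. Because the supports $\{|\varphi - f_n|_{C([0,T])} < r_n\}$ are pairwise disjoint (the ramps $f_n$ are at mutual distance $\geq 1$ and $r_n < 1/2$), at most one summand is nonzero at any $\varphi$, so $h$ is well-defined, continuous, and bounded with $-1 \leq h \leq 0$. It is not uniformly continuous: near $f_n$ the function drops from $0$ to $-1$ over a spatial scale $r_n \to 0$, so $\sup_{|\varphi-\psi| < \delta} |h(\varphi)-h(\psi)| = 1$ for every $\delta > 0$.

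Now I estimate the two sides of \eqref{eq:ulp} for this $h$. On the ``variational'' side, $h(f_n) = -1$ and $I_n(f_n) = \frac12\int_0^T 1^2\,ds = T/2$, so $\inf_\varphi\{h(\varphi)+I_n(\varphi)\} \leq -1 + T/2$; taking the infimum over $n$ this quantity is $\leq -1 + T/2$ for every $n \in A$ (in fact it is bounded above by a constant $< 0$ provided we rescale, e.g. replace $t$ by $ct$ so that $T/2$ is replaced by $cT/2 < 1$ — or simply shrink the time horizon or premultiply $h$ by a large constant; I would fix such a normalization so that $\inf_\varphi\{h(\varphi)+I_n(\varphi)\} \leq -\kappa < 0$ uniformly in $n$). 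On the ``probabilistic'' side, for each fixed $\e>0$ and each $n$,
\[
  \E\exp\!\left(-\frac{h(X^\e_n)}{\e}\right)
  = \E\exp\!\left(\frac{1}{\e}\left(1 - \frac{|\sqrt\e W - (f_n - n)|_{C([0,T])}}{r_n}\right)^{\!+}\right),
\]
and since $|\sqrt\e W - (f_n - n)|_{C([0,T])} = |\sqrt\e W - (\cdot)|_{C([0,T])} > 0$ a.s. with the \emph{same} law for every $n$, letting $n \to \infty$ (so $r_n \to 0$) forces the integrand to $\to 1$ pointwise a.s., and by monotone/dominated convergence $\sup_{n} \E\exp(-h(X^\e_n)/\e) \geq \E\exp((1-0^+)/\e)\cdot(\text{something}) $; more carefully, for any fixed $\e$, $\sup_n \E\exp(-h(X^\e_n)/\e) = \exp(1/\e)$ because as $n\to\infty$ the indicator-like term tends to $1$ on the full-probability event $\{\sqrt\e W \neq \cdot\}$. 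Hence $\e \log \sup_n \E\exp(-h(X^\e_n)/\e) = 1$ for every $\e>0$, so $\sup_{x\in A}\big(\e\log\E\exp(-h(X^\e_x)/\e) + \inf_\varphi\{h(\varphi)+I_x(\varphi)\}\big) \geq 1 - \kappa$, which does not tend to $0$. This contradicts \eqref{eq:ulp}, proving the ULP fails over $A = \NN \in \mathscr{A}$.

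The main obstacle is bookkeeping rather than conceptual: one must (i) verify continuity of the infinite sum $h$, which follows cleanly from the disjointness of its supports, and (ii) pin down the normalization so that $\inf_\varphi\{h(\varphi)+I_x(\varphi)\}$ stays bounded away from $0$ uniformly in $x\in A$ while keeping $h$ bounded — the cleanest route is to rescale time so that the ramp has $I_x$-cost strictly less than $1$, or equivalently to put a large multiplicative constant in front of $h$ and enlarge the ball radii' decay accordingly. As in Remark \ref{rem:counterexample-bounded}, the same construction works verbatim with the bounded precompact set $A = \{2^{-n}\}_{n=1}^\infty$ and ramps $f_n(t) = 2^{-n} + t$, showing that the ULP, like the DZULDP, genuinely requires the precompactness Assumption \ref{assum:compact-level-union}.
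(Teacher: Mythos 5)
Your construction is essentially the paper's, with the test function written upside down: the paper takes $h(\varphi)=j\min\{1,\min_{n}2^{n}|\varphi-f_n|_{C([0,T])}\}$ with $j>T/2$, which agrees (up to the additive constant $j$ and the exact tent profile) with your $-j\sum_n(1-|\varphi-f_n|_{C([0,T])}/r_n)^+$ after your proposed rescaling; since adding a constant to $h$ changes neither $a(\e)\log\E\exp(-h/a(\e))$ plus $\inf\{h+I_x\}$ nor anything else in \eqref{eq:ulp}, the two choices are interchangeable. The disjoint-support argument for continuity and boundedness, the bound $\inf_\varphi\{h(\varphi)+I_n(\varphi)\}\le h(f_n)+I_n(f_n)$, the normalization $j>T/2$, and the reduction $|X^\e_n-f_n|_{C([0,T])}=|\sqrt{\e}W-\iota|_{C([0,T])}$ with $\iota(t)=t$ are all correct and match the paper's proof.

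The final limit, however, is wrong as written and the concluding inequality does not follow. You correctly note that $(1-|\sqrt{\e}W-\iota|_{C([0,T])}/r_n)^+\to 0$ almost surely as $n\to\infty$ (since $|\sqrt{\e}W-\iota|_{C([0,T])}>0$ a.s.\ and $r_n\to 0$), so the integrand $\exp(\e^{-1}(1-|\sqrt{\e}W-\iota|_{C([0,T])}/r_n)^+)$ tends to $1$ and, by dominated convergence, $\E\exp(-h(X^\e_n)/\e)\to 1$, i.e.\ $\e\log\E\exp(-h(X^\e_n)/\e)\to 0$. This directly contradicts your next claim that $\sup_n\E\exp(-h(X^\e_n)/\e)=\exp(1/\e)$ and that $\e\log\sup_n(\cdots)=1$; that identity would require $-h(X^\e_n)=1$ a.s.\ for some $n$, which never happens, and the asserted lower bound ``$\sup_n(\cdots)\ge 1-\kappa$'' is derived from it. The repair is short and uses only what you already proved: for each fixed $\e>0$, the first term $\e\log\E\exp(-h(X^\e_n)/\e)$ is nonnegative and tends to $0$ as $n\to\infty$, while $\inf_\varphi\{h(\varphi)+I_n(\varphi)\}\le -\kappa<0$ for every $n$; hence $\liminf_{n\to\infty}\bigl(\e\log\E\exp(-h(X^\e_n)/\e)+\inf_\varphi\{h(\varphi)+I_n(\varphi)\}\bigr)\le-\kappa$, so $\sup_{n}\bigl|\e\log\E\exp(-h(X^\e_n)/\e)+\inf_\varphi\{h(\varphi)+I_n(\varphi)\}\bigr|\ge\kappa$ for every $\e>0$, contradicting \eqref{eq:ulp}. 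In other words, the bracketed quantity fails to vanish because it stays \emph{below} $-\kappa$ along $n\to\infty$, not because it stays above $1-\kappa$; as stated, the decisive step of your proof rests on a false evaluation of the expectation and would not survive scrutiny without this correction.
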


\begin{proof}
Let $A= \mathbb{N}$. For $n \in A$, let $f_n(t) = n+ t$. Let $j> \frac{T}{2}$. Define $h: C([0,T]) \to \mathbb{R}$ by
\[h(\varphi) = j \min\left\{1,\min_{n \in A}\{ 2^n|\varphi - f_n|_{C([0,T])}\}\right\}.\]
This function has the properties that $h(f_n) = 0$ for all $n \in A$ and $h(\varphi) = j$ if $|\varphi - f_n|_{C([0,T])}\geq 2^{-n}$ for all $n\in A$. Otherwise $h(\varphi) \in [0,j]$.

For any $n \in A$,
\[\inf_{\varphi \in C([0,T])} \{h(\varphi) + I_n(\varphi)\} \leq h(f_n) + I_n(f_n) = \frac{1}{2}\int_0^T 1^2dt = \frac{T}{2}.\]

Because $X^\e_n(0)=n$, $-h(X^\e_n)\leq -j$ when $|X^\e_n -f_n|_{C([0,T])}\geq  2^{-n}$ and because $h\geq 0$, $-h(X^\e_n)\leq 0$ when $|X^\e_n  -f_n|\leq 2^{-n}$. Therefore,
\begin{align*}
  &\inf_{n\in A} \E \exp \left(-\frac{h(X_n^\e)}{\e} \right) \leq e^{-\frac{j}{\e}} +   \inf_{n\in A}\Pro(|X^\e_n - f_n|<2^{-n}) \\
  &\leq  e^{-\frac{j}{\e}} +  \inf_{n \in A}\Pro\left(\sup_{t \in [0,T]}|\sqrt{\e}W(t) - t|<2^{-n} \right)
  \leq e^{-\frac{j}{\e}}.
\end{align*}
Therefore,
\[\inf_{n \in A} \left( \e \log \E \exp \left(-\frac{h(X_n^\e)}{\e} \right) + \inf_{\varphi \in C([0,T])}\{h(\varphi) +  I_n(\varphi)\} \right) \leq -j + \frac{T}{2}<0.\]
$X^\e_n$ does not satisfy a ULP over $A$ because \eqref{eq:ulp} fails.
\end{proof}

$X^\e_x$ fails to satisfy a ULP because of a lack of uniform continuity.
The test function $h: C([0,T]) \to \mathbb{R}$ in the proof of Theorem \ref{thm:ulp-counter} is continuous but not uniformly continuous. 
This counterexample inspires the formulation of the EULP in Definition \ref{def:eulp}

The set $G$ in the proof of Theorem \ref{thm:counter-BM-DZ} is open, but it is open in a very degenerate way. The open set is a union of $C([0,T])$-balls of arbitrarily small radii. To use imprecise language: such a $G$ is open, but it is not uniformly open.

A generalization on the DZULDP was introduced by Lipshutz \cite{l-2017} to exclude testing on sets like those in the proof of Theorem \ref{thm:counter-BM-DZ}. For any open set $G \subset \mathcal{E}$, let
\begin{equation} \label{eq:G-eta}
  G_\eta = \{\varphi \in G: \dist(\varphi, \mathcal{E} \setminus G) > \eta\},
\end{equation}
and for any closed set $F\subset \mathcal{E}$ let
\begin{equation} \label{eq:F-eta}
  F^\eta = \{\varphi \in \mathcal{E}: \dist(\varphi, F) \leq \eta\}.
\end{equation}

\begin{definition}[Lipshutz uniform large deviations principle (LULDP)]  \label{def:luldp}
  Let $\mathscr{A}$ be a collection of subsets of $\mathcal{E}_0$ and $a(\e)$ be a function converging to zero as $\e$ converges to zero. The random variables $\{X^\e_x\}$ are said to satisfy a uniform large deviations principle with respect to the rate functions $I_x$ with speed $a(\e)$ uniformly over $\mathscr{A}$, if
  \begin{enumerate}[(a)]
    \item For any $A \in \mathscr{A}$ and $G \subset \mathcal{E}$ open,
    \begin{equation} \label{eq:luldp-lower}
      \liminf_{\e \to 0} \inf_{x \in A}  \left(a(\e) \log \Pro(X^\e_x\in G)  \right) \geq -\lim_{\eta \to 0}\sup_{x \in A} I_x(G_\eta).
    \end{equation}
    \item For any $A \in \mathscr{A}$ and $F \subset \mathcal{E}$ closed, and $s \leq I_x(F)$,
    \begin{equation} \label{eq:luldp-upper}
      \limsup_{\e \to 0} \sup_{x \in A}  \left(a(\e) \log \Pro(X^\e_x\in F)   \right) \leq -\lim_{\eta \to 0}\inf_{x \in A} I_x(F^\eta).
    \end{equation}
  \end{enumerate}
\end{definition}
This definition enables the DZULDP to be used over non-compact sets, but it is not equivalent to the FWULDP.
We give an example where the FWULDP (Definition \ref{def:fwuldp}) is not satisfied but the LULDP (Definition \ref{def:luldp} is satisfied).
Consider the process for $\e>0$ and $x \in \mathbb{R}$
\begin{equation} \label{eq:counterex2}
  Y^\e_x(t) := (1+\e)x + \sqrt{\e}W(t) = X^\e_{(1+\e)x}.
\end{equation}
Let $I_x$ be the same rate function defined in \eqref{eq:BM-rate-fct}. It is not difficult to show that $Y^\e_x$ satisfies a FWULDP in $C([0,T])$ with respect to $I_x$ that is uniform with respect to initial conditions $x$ in bounded subsets of $\mathbb{R}$. We will show that $Y^\e_x$ does not satisfy a FWULDP over initial conditions in unbounded sets. On the other hand, $Y^\e_x$ does satisfy a LULDP over the whole space.

\begin{theorem}
  $\{Y^\e_x\}$ does not satisfy a FWULDP over $x \in \mathbb{R}$ with respect to its rate function $I_x$.
\end{theorem}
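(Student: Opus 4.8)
The plan is to show that the FWULDP upper bound \eqref{eq:fwuldp-upper} fails for $\{Y^\e_x\}$ over $A = \mathbb{R}$ (or any unbounded set), exploiting the fact that the $\e$-dependent shift $(1+\e)x$ becomes macroscopically large when $x \sim 1/\e$. The key observation is that $Y^\e_x(0) = (1+\e)x \neq x$, so $Y^\e_x$ is far from the level set $\Phi_x(s)$ — whose elements all start at $x$ — whenever $|\e x|$ exceeds the tolerance $\delta$. Concretely, every $\varphi \in \Phi_x(s)$ satisfies $\varphi(0) = x$, hence $\dist(Y^\e_x, \Phi_x(s)) \geq |Y^\e_x(0) - x| = |\e x|$. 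So if we pick, for each $\e>0$, an initial condition $x_\e$ with $|\e x_\e| \geq \delta$ (e.g. $x_\e = \delta/\e$), then $\Pro(\dist(Y^\e_{x_\e}, \Phi_{x_\e}(s)) \geq \delta) = 1$ for all $s$, and in particular taking $s = 0$ gives
\[
\sup_{x \in \mathbb{R}} \sup_{s \in [0,s_0]} \left( \e \log \Pro(\dist(Y^\e_x, \Phi_x(s)) \geq \delta) + s \right) \geq \e \log 1 + 0 = 0.
\]

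First I would fix $\delta > 0$ and $s_0 > 0$ arbitrary (say $\delta = 1$, $s_0 = 1$), and for each $\e > 0$ set $x_\e = \delta/\e \in \mathbb{R}$. Second, I would verify the geometric claim: since $I_{x_\e}$ is defined via \eqref{eq:BM-rate-fct} with paths of the form $\varphi(t) = x_\e + \int_0^t u(s)\,ds$, every $\varphi \in \Phi_{x_\e}(s_0)$ has $\varphi(0) = x_\e$, so $|Y^\e_{x_\e} - \varphi|_{C([0,T])} \geq |Y^\e_{x_\e}(0) - x_\e| = |(1+\e)x_\e - x_\e| = \e x_\e = \delta$, whence $\dist(Y^\e_{x_\e}, \Phi_{x_\e}(s)) \geq \delta$ surely, for every $s \in [0,s_0]$. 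Third, taking $s = 0$ (or indeed any $s$), this gives $\Pro(\dist(Y^\e_{x_\e}, \Phi_{x_\e}(0)) \geq \delta) = 1$, so the quantity inside the $\limsup$ in \eqref{eq:fwuldp-upper} is at least $\e \log 1 + 0 = 0$ for this choice of $x = x_\e$, hence the $\sup$ over $x \in \mathbb{R}$ is $\geq 0$ for every $\e$. Taking $\limsup_{\e \to 0}$ shows the left side of \eqref{eq:fwuldp-upper} is $\geq 0$; but actually I can do better and show it is $> 0$ or even $+\infty$ is not needed — it suffices that it is not $\leq 0$ strictly, which already contradicts... well, $\geq 0$ is consistent with $\leq 0$ only if it equals $0$, so I should sharpen: pick $x_\e = (s_0 + \delta)/\e$ — no, cleaner is to keep $s$ free. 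Actually the cleanest contradiction: the bound \eqref{eq:fwuldp-upper} with this $\delta$ would force $\sup_s(\e \log \Pro(\cdots) + s) \le o(1)$, but choosing $x_\e = \delta/\e$ and then $s = s_0$ gives a value $\geq s_0 > 0$ for all small $\e$, contradicting the $\limsup \le 0$.

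I expect essentially no obstacle here; this is a short argument. The only point requiring a sentence of care is the last one — one must feed $s = s_0$ (not $s = 0$) into the supremum over $s \in [0,s_0]$ to get a strictly positive lower bound $s_0$ on the bracketed expression, since $\e \log \Pro(\dist(Y^\e_{x_\e}, \Phi_{x_\e}(s_0)) \geq \delta) + s_0 = 0 + s_0 = s_0 > 0$, and therefore
\[
\limsup_{\e \to 0} \sup_{x \in \mathbb{R}} \sup_{s \in [0,s_0]} \left( \e \log \Pro(\dist(Y^\e_x, \Phi_x(s)) \geq \delta) + s \right) \geq s_0 > 0,
\]
which violates \eqref{eq:fwuldp-upper}. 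One should also remark why this argument does not apply to bounded $A$: if $A$ is bounded by $M$, then $\dist(Y^\e_x, \Phi_x(s)) \geq \delta$ can be forced deterministically only when $\e M \geq \delta$, which fails for $\e$ small, so the deterministic part of the argument evaporates and one recovers the FWULDP by the translation/perturbation argument used for $X^\e_x$ in Theorem \ref{thm:BM-FWULDP} — but this positive direction is not needed for the stated theorem.
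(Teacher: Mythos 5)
Your proposal is correct and uses essentially the same argument as the paper: every $\varphi\in\Phi_x(s)$ starts at $x$, while $Y^\e_x(0)=(1+\e)x$, so choosing $x_\e=\delta/\e$ forces $\dist(Y^\e_{x_\e},\Phi_{x_\e}(s))\geq\delta$ deterministically and (with $s=s_0$) violates \eqref{eq:fwuldp-upper}. The paper additionally shows the lower bound \eqref{eq:fwuldp-lower} fails by the same observation, but disproving one of the two bounds, as you do, already suffices.
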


\begin{proof}
  If $\varphi \in \Phi_x(s)$, then $\varphi(0) = x$. It follows that
  \[|Y^\e_x - \varphi|_{C([0,T])} \geq |Y^\e_x(0) - \varphi(0)| = |(1+\e)x - x| = \e|x|.\]
  For any $\delta>0$ and $\e>0$, there exists $x \in \mathbb{R}$ such that $\e|x|>\delta$.
  Therefore,
  \[\inf_{x \in \mathbb{R}}\inf_{\varphi \in \Phi_x(s)}\Pro(|Y^\e_x - \varphi|_{C([0,T])}<\delta) = 0\]
  and
  \[\inf_{x \in \mathbb{R}} \inf_{\varphi \in \Phi_x(s)} \e \log \Pro(|Y^\e_x - \varphi|_{C([0,T])}<\delta) = -\infty.\]
  This proves that \eqref{eq:fwuldp-lower} fails.
  Along the same lines,
  \[\sup_{x \in \mathbb{R}} \sup_{s \in [0, s_0]} \e \log \Pro(\dist(Y^\e_x, \Phi_x(s))\geq \delta) = 0\]
  and \eqref{eq:fwuldp-upper} fails.
\end{proof}

Despite the fact that $Y^\e_x$ does not satisfy a FWULDP over $x \in \mathbb{R}$, it does satisfy a LULDP.
\begin{theorem} \label{thm:luldp-counter}
  For any $G\subset C([0,T])$ open and any $\eta>0$, let $G_\eta$ be as in \eqref{eq:G-eta}, then
  \begin{equation} \label{eq:luldp-lower-true}
    \liminf_{\e \to 0} \inf_{x \in \mathbb{R}} \Pro(Y^\e_x \in G) \geq -\sup_{x \in \RR} I_x(G_\eta).
  \end{equation}
  For any $F \subset C([0,T])$ closed and any $\eta>0$, let $F^\eta$ be as in \eqref{eq:F-eta}, then
  \begin{equation} \label{eq:luldp-upper-true}
    \limsup_{\e \to 0} \sup_{x \in \mathbb{R}} \Pro(Y^\e_x \in F) \leq \inf_{x \in \RR}  I_x(F^\eta).
  \end{equation}
\end{theorem}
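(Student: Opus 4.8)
The plan is to exploit the identity $Y^\e_x = X^\e_{(1+\e)x}$ from \eqref{eq:counterex2} together with the genuine FWULDP that $\{X^\e_y\}$ satisfies uniformly over all of $\mathbb{R}$, established in Theorem~\ref{thm:BM-FWULDP}. The key point is that for each fixed $\e>0$ the map $x\mapsto(1+\e)x$ is a bijection of $\mathbb{R}$, so that whenever I take an infimum or supremum over $x\in\mathbb{R}$ of a quantity built from $X^\e_{(1+\e)x}$ or from $I_{(1+\e)x}$, I may rename $y=(1+\e)x$ and instead take the infimum or supremum over $y\in\mathbb{R}$ of the corresponding quantity built from $X^\e_y$ or $I_y$; thus the $\e$-dependent shift of the initial condition costs nothing. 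Both displays should be read with $\e\log\Pro$ in place of $\Pro$ (and with a minus sign on the right of the upper bound), i.e.\ as the LULDP bounds \eqref{eq:luldp-lower}--\eqref{eq:luldp-upper} for each fixed $\eta>0$; letting $\eta\to0$ then recovers Definition~\ref{def:luldp}. Each reduces to Theorem~\ref{thm:BM-FWULDP} as follows.

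For the lower bound \eqref{eq:luldp-lower-true}, I would fix an open set $G$ and $\eta>0$, set $M := \sup_{y\in\mathbb{R}} I_y(G_\eta)$, and assume $M<+\infty$ (there being nothing to prove otherwise). Given $\kappa>0$ and $x\in\mathbb{R}$, since $I_{(1+\e)x}(G_\eta)\leq M$ I can pick $\psi=\psi(x,\e)\in G_\eta$ with $\psi\in\Phi_{(1+\e)x}(M+\kappa)$; because $\psi\in G_\eta$, the open $\eta$-ball about $\psi$ in $C([0,T])$ is contained in $G$, so $\Pro(Y^\e_x\in G)\geq\Pro(|X^\e_{(1+\e)x}-\psi|_{C([0,T])}<\eta)$. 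Taking $\e\log$ and applying the FWULDP lower bound \eqref{eq:BM-ULDP-lower} with $s_0=M+\kappa$, $\delta=\eta$ at the initial condition $(1+\e)x\in\mathbb{R}$ yields $\e\log\Pro(Y^\e_x\in G)\geq -(M+\kappa)+\beta(\e)$ uniformly in $x$, where $\beta(\e)$ denotes the infimum over $y\in\mathbb{R}$ and $\varphi\in\Phi_y(M+\kappa)$ of $\e\log\Pro(|X^\e_y-\varphi|_{C([0,T])}<\eta)+I_y(\varphi)$, which satisfies $\liminf_{\e\to0}\beta(\e)\geq0$. Taking $\inf_{x\in\mathbb{R}}$, then $\liminf_{\e\to0}$, then $\kappa\downarrow0$ gives \eqref{eq:luldp-lower-true}.

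For the upper bound \eqref{eq:luldp-upper-true}, I would fix a closed set $F$ and $\eta>0$, set $m := \inf_{y\in\mathbb{R}} I_y(F^\eta)$, and assume $m>0$ (if $m=0$ the bound is trivial, since $\e\log\Pro\leq0$). The one new --- but elementary --- ingredient is the claim that $\dist(\varphi,\Phi_y(s))\geq\eta$ for every $0<s<m$, every $y\in\mathbb{R}$, and every $\varphi\in F$: indeed, if $\varphi\in F$ and $\psi\in\Phi_y(s)$ satisfied $|\varphi-\psi|_{C([0,T])}<\eta$, then $\dist(\psi,F)<\eta$, hence $\psi\in F^\eta$, hence $I_y(\psi)\geq I_y(F^\eta)\geq m>s$, contradicting $I_y(\psi)\leq s$. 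Consequently $\Pro(Y^\e_x\in F)=\Pro(X^\e_{(1+\e)x}\in F)\leq\Pro(\dist(X^\e_{(1+\e)x},\Phi_{(1+\e)x}(s))\geq\eta)$, and taking $\e\log$ and applying the FWULDP upper bound \eqref{eq:BM-ULDP-upper} with $s_0=s$, $\delta=\eta$ yields $\e\log\Pro(Y^\e_x\in F)\leq -s+\gamma(\e)$ uniformly in $x$, with $\limsup_{\e\to0}\gamma(\e)\leq0$. Taking $\sup_{x\in\mathbb{R}}$, then $\limsup_{\e\to0}$, then $s\uparrow m$ gives \eqref{eq:luldp-upper-true}.

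I do not expect a genuine obstacle here: once the identity $Y^\e_x=X^\e_{(1+\e)x}$ is noted, the argument is bookkeeping around Theorem~\ref{thm:BM-FWULDP}. The single point deserving care --- and the only place where uniformity of the FWULDP over the \emph{entire} line in Theorem~\ref{thm:BM-FWULDP}, rather than over a fixed compact set, is essential --- is that the shifted initial condition $(1+\e)x$ varies with $\e$, so the FWULDP one invokes for $\{X^\e_y\}$ must already hold uniformly over all $y\in\mathbb{R}$. The degenerate cases ($G_\eta=\emptyset$, $F=\emptyset$, $m=0$, $M=+\infty$) are all immediate.
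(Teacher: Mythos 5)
Your proposal is correct and follows essentially the same route as the paper's proof: both reduce everything to the identity $Y^\e_x = X^\e_{(1+\e)x}$ and the FWULDP of Theorem \ref{thm:BM-FWULDP} over all of $\mathbb{R}$, using the $\eta$-ball containment $\{|\varphi-\psi|<\eta\}\subset G$ for $\psi\in G_\eta$ in the lower bound and the containment $F\subset\{\dist(\cdot,\Phi_y(s))\geq\eta\}$ for $s<\inf_y I_y(F^\eta)$ in the upper bound. Your observation that the displayed inequalities must be read with $\e\log\Pro$ (and a minus sign in the upper bound) matches what the paper's own proof actually establishes.
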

\begin{proof}
  Fix $\eta>0$. If $\sup_{x \in \RR} I_x(G_\eta)=+\infty$, then \eqref{eq:luldp-lower-true} is trivially true. Assume that $\sup_{x \in \RR} I_x(G_\eta)=:s_0<+\infty$.
  This means that for any $s>s_0$ and $x \in \mathbb{R}$, there exists $\varphi_x \in G_\eta$ such that $I_x(\varphi_x)\leq s$. Because $\varphi_x \in G_\eta$, the $\eta$-open balls
  \[\{\varphi \in C([0,T]): |\varphi - \varphi_x|_{C([0,T])}<\eta\} \subset G.\]
  For any $x \in \mathbb{R}$ and $\e>0$,
  \begin{align*}
    &\Pro(Y^\e_x \in G) = \Pro(X^\e_{(1+\e)x} \in G) \geq \Pro(|X^\e_{(1+\e)x} -\varphi_{(1+\e)x}|_{C([0,T])}<\eta).
  \end{align*}
  By \eqref{eq:BM-ULDP-lower},
  \begin{align*}
    &\liminf_{\e \to 0} \inf_{x \in \mathbb{R}} \Pro(Y^\e_x \in G)\\
     &\geq \liminf_{\e \to 0}  \inf_{x \in \mathbb{R}}\inf_{\varphi \in \Phi_{(1+\e)x}(s)} \Pro(|X^\e_{(1+\e)x} -\varphi|_{C([0,T])}<\eta) \\
    &\geq \liminf_{\e \to 0} \inf_{x \in \mathbb{R}} \inf_{\varphi \in \Phi_x(s)} \Pro(|X^\e_x - \varphi|_{C([0,T])}<\eta) \\
    &\geq -s.
  \end{align*}
  Recall that $s>s_0$ was arbitrary so it follows that
  \[\liminf_{\e \to 0} \inf_{x \in \mathbb{R}} \Pro(Y^\e_x \in G) \geq -s_0 = \sup_{x \in \RR} I_x(G_\eta)\]
   which proves \eqref{eq:luldp-lower-true}.

  The upper bound \eqref{eq:luldp-upper-true} is trivially true if $\inf_{x \in \mathbb{R}} I_x(F^\eta) = 0$. Assume that $\inf_{x \in \mathbb{R}} I_x(F^\eta)= :s_0>0$. A consequence is that for any $x \in \mathbb{R}$ and $s <s_0$, $F^\eta \cap \Phi_x(s) = \emptyset$. By the definition of $F^\eta$, it follows that for any $s <s_0$ and $x \in \mathbb{R}$,
  \[F \subset \{\varphi \in C([0,T]): \dist(\varphi, \Phi_x(s))\geq \eta\}.\]
  Recalling that $Y^\e_x = X^\e_{(1+\e)x}$, we see that.
  \[\Pro(Y^\e_x \in F) = \Pro(X^\e_{(1+\e)x} \in F) \leq \Pro(\dist(X^\e_{(1+\e)x}, \Phi_{(1+\e)x}(s)) \geq \eta).\]
  Then by \eqref{eq:BM-ULDP-upper},
  \[\limsup_{\e \to 0} \sup_{x \in \mathbb{R}} \e \log \Pro(Y^\e_x \in F) \leq -s. \]
  The choice of $s \leq s_0$ was arbitrary, so
  \[\limsup_{\e \to 0} \sup_{x \in \mathbb{R}} \e \log \Pro(Y^\e_x \in F) \leq -s_0.\]
  Because of our choice of $s_0$, this proves \eqref{eq:luldp-upper-true}.
\end{proof}

This $Y^\e_x(t) = (1+\e)x + \sqrt{\e}W(t)$ example illustrates the important difference between the FWULDP and the LULDP.
In the FWULDP, (see \eqref{eq:fwuldp-lower}), the probability divergence rate $a(\e) \log \Pro(\rho(Y^\e_x, \varphi)<\delta)$ is always compared to $I_x(\varphi)$. In \eqref{eq:luldp-lower}, $a(\e) \log \Pro(Y^\e_x \in G)$ is compared to $\sup_{x \in A}I_x(G)$. In the proof of Theorem \ref{thm:luldp-counter}, this allowed us to compare $\e \log \Pro(Y^\e_x \in G)$ to $I_{(1+\e)x}(G)$. The FWULDP insists that exponential decay of probabilities about $X^\e_x$ are described by $I_x$, but the LULDP allows us to describe the decay of these probabilities with $I_y$ for $y \not = x$.

\section{Example - Hilbert space valued process} \label{S:example}
The EULP will be most useful for studying large deviations principles for infinite dimensional systems.  Let $H$ be an infinite dimensional separable Hilbert space. Let $C([0,T]:H)$ be the Banach space of continuous functions from $[0,T] \to H$ endowed with the norm
\[|\varphi|_{C([0,T]:H)} = \sup_{t \in [0,T]} |\varphi(t)|_H.\]
We will show under very general assumptions that an $H$-valued  family of stochastic processes satisfies a FWULDP uniformly over bounded sets of initial conditions in the Hilbert space. If we assume that the multiplicative noise coefficient is bounded in an appropriate sense then the FWULDP will be uniform over initial conditions in the entire space. These results show that there is no reason to restrict the study of uniform large deviations principles to compact sets of initial condition.

We consider the following small noise $H$-valued stochastic equation with Lipschitz continuous coefficients. See Chapter 7.1.1 of \cite{dpz} for more information about such a system.
\begin{equation} \label{eq:SPDE}
  dX^\e_x(t) = [\mathcal{A}X^\e_x(t) + B(X^\e_x(t))]dt + \sqrt{\e}G(X^\e_x(t))dw(t), \ \ \ X^\e_x(0) = x.
\end{equation}
In the above equation, $X^\e_x(t)$ and $x$  are $H$-valued. $\mathcal{A}: D(\mathcal{A}) \subset H \to H$ is an unbounded linear operator that generates a $C_0$ semigroup on $H$ called $S(t)$.
The mild solution to \eqref{eq:SPDE} is defined to be the $\mathcal{F}_t$-adapted $C([0,T]:H)$ solution to the integral equation
\begin{equation} \label{eq:mild}
  X^\e_x(t) = S(t)x + \int_0^t S(t-s)B(X^\e_x(s))ds + \sqrt{\e} \int_0^t S(t-s)G(X^\e_x(s))dw(s).
\end{equation}

The noise $w(t)$ is a cylindrical Wiener process. Let $\mathbb{R}^\infty$ be the collection of sequences of real numbers endowed with the metric of componentwise convergence. Let $w(t) = \{\beta_k(t)\}_{k=1}^\infty$ be a family of i.i.d. one-dimensional Brownian motions on a filtered probability space $(\Omega, \mathcal{F}, \{\mathcal{F}_t\}, \Pro)$. Define the Hilbert space $U = \{u = \{u_k\}_{k=1}^\infty \in \mathbb{R}^\infty: \sum_{k=1}^\infty u_k^2 < +\infty\}$ endowed with the inner product $\left<u,v\right>_U = \sum_{k=1}^\infty u_kv_k$.

Let $L_2 :=L_2(U,H)$ denote the space of Hilbert-Schmidt operators from $U$ to $H$. The Hilbert-Schmidt norm of a bounded linear operator $M: U \to H$ is
\begin{equation}
  \|M\|_{L_2}^2 = \sum_{k=1}^\infty |Me_k|_H^2
\end{equation}
where $\{e_k\}$ is any complete any orthonormal basis of $U$.

We assume that for any $t>0$ and $x \in H$, $S(t)G(x)$ is a Hilbert-Schmidt operator from $U$ to $H$.

The next assumptions describe that both $B$ and $G$ are Lipschitz continuous in an appropriate sense.
\begin{assumption} \label{assum:B-Lip}
  The nonlinear operator $B:H \to H$ is Lipschitz continuous. There exists a constant $\kappa>0$ such that
  \begin{enumerate}[(a)]
    \item For any $x,y \in H$,
    \begin{equation}
      |B(x) - B(y)|_H  \leq \kappa |x-y|_H.
    \end{equation}
    \item For any $x \in H$,
    \begin{equation}
       |B(x)|\leq \kappa (1 + |x|_H).
    \end{equation}
  \end{enumerate}
\end{assumption}
\begin{assumption} \label{assum:G-Lip}
  There exists a locally square integrable mapping $K: [0,+\infty) \to [0,+\infty)$ and a constant $\alpha \in (0,1/2)$ such that for any $t>0$
  \begin{equation} \label{eq:K-int}
    \int_0^t s^{-2\alpha} K^2(s) ds < +\infty
  \end{equation}
  such that
  \begin{enumerate}[(a)]
    \item For any $x,y \in H$ and $t>0$,
    \begin{equation}
      \|S(t)G(x) - S(t)G(y)\|_{L_2} \leq K(t) |x-y|_H.
    \end{equation}
    \item We either assume
    \begin{enumerate}[(i)]
      \item $G$ is bounded in the sense that for any $x \in H$  and $t>0$
      \begin{equation} \label{eq:G-bounded}
        \|S(t)G(x)\|_{L_2} \leq K(t)
      \end{equation}
      or
      \item $G$ has linear growth in the sense that
      \begin{equation} \label{eq:G-grows}
        \|S(t)G(x)\|_{L_2} \leq K(t)(1 + |x|_H).
      \end{equation}
    \end{enumerate}
  \end{enumerate}
\end{assumption}

Under these assumptions, the $C([0,T]:H)$ solution of \eqref{eq:mild} exists and is unique for any $x \in H$ and $\e>0$ \cite[Theorem 7.5]{dpz}. Furthermore, for every $x \in H$ there exists a measurable map $\mathscr{G}_x:  C([0,T]:\mathbb{R}^\infty) \to C([0,T]:H)$ such that for any $x \in H$ and $\e>0$, $X^{\e}_x = \mathscr{G}_x( \sqrt{\e}w)$.

Define the space $L^2([0,T]:U)$ to be the space of $U$ valued processes such that
$|u|_{L^2([0,T]:U)}^2: = \int_0^T |u(s)|_U^2 ds < +\infty$. Let $\mathcal{P}_2$ be the collection of $\mathcal{F}_t$-adapted $U$-valued controls $u(t)$ such that $\Pro(|u|_{L^2([0,T]:U)} < +\infty)=1$. Let $\mathcal{S}^N = \{u \in L^2([0,T]:U): |u|_{L^2([0,T]:U)}^2 \leq N\}$. Let $\mathcal{P}_2^N= \{u \in \mathcal{P}_2: \Pro(u \in \mathcal{S}^N)=1\}$. 

For any $u \in \mathcal{P}_2$ and $\e\geq0$, let $X^{\e,u}_x := \mathscr{G}_x\left( \sqrt{\e}W + \int_0^\cdot u(s)ds \right)$.
Such a process solves
\begin{align} \label{eq:control-mild}
  X^{\e,u}_x(t) = &S(t)x + \int_0^t S(t-s)B(X^{\e,u}_x(s))ds \nonumber \\
  &+ \sqrt{\e}\int_0^t S(t-s)G(X^{\e,u}_x(s))dw(s) \nonumber\\
  &+ \int_0^t S(t-s)G(X^{\e,u}_x(s))u(s)ds.
\end{align}
We will prove in Lemma \ref{lem:a-priori} that $X^{\e,u}_x$ is well-posed for any $x \in H$, $\e>0$, $N>0$, and $u \in \mathcal{P}_2^N$.

For any $x \in H$ define the rate function $I_x: C([0,T]:H) \to [0,+\infty]$ by
\begin{equation}
  I_x(\varphi) = \inf \left\{\frac{1}{2} \int_0^T |u(s)|_U^2 ds : \varphi = X^{0,u}_x \right\}.
\end{equation}
We use the convention that the infimum over the empty set is $+\infty.$
\begin{theorem} \label{thm:good-rate-function}
  For any $x \in H$, $I_x$ is a good rate function
\end{theorem}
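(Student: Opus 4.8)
The goal is to show that for each fixed $x \in H$, the level set $\Phi_x(N) = \{\varphi \in C([0,T]:H) : I_x(\varphi) \leq N\}$ is a compact subset of $C([0,T]:H)$; lower semicontinuity of $I_x$ is then automatic (or can be checked directly along the way). By Remark \ref{rem:good}, it suffices to show that the set
\[
\Phi_x(N) = \left\{ X^{0,u}_x : u \in \mathcal{S}^{2N} \right\}
\]
is compact, where $X^{0,u}_x$ solves the skeleton (deterministic, $\e=0$) version of \eqref{eq:control-mild}. Since $\mathcal{S}^{2N}$ is a closed ball in the Hilbert space $L^2([0,T]:U)$, it is weakly compact and weakly sequentially compact. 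So the plan is the classical one: take a sequence $\varphi_n = X^{0,u_n}_x$ with $u_n \in \mathcal{S}^{2N}$, extract a weakly convergent subsequence $u_n \rightharpoonup u$ with $u \in \mathcal{S}^{2N}$ (using weak lower semicontinuity of the $L^2$ norm), and prove that $\varphi_n \to X^{0,u}_x$ strongly in $C([0,T]:H)$ along that subsequence. Closedness of $\Phi_x(N)$ follows from the same argument, and compactness follows from sequential compactness since $C([0,T]:H)$ is a metric space.

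The first substantive step is a uniform a priori bound: using Assumption \ref{assum:B-Lip}, Assumption \ref{assum:G-Lip}(b) (either boundedness or linear growth), the condition \eqref{eq:K-int} on the kernel $K$, and a Gronwall-type argument (this is essentially what Lemma \ref{lem:a-priori} provides), one obtains $\sup_{u \in \mathcal{S}^{2N}} |X^{0,u}_x|_{C([0,T]:H)} \leq C(N, T, x) < \infty$. The second step is an equicontinuity/compactness estimate for the trajectories: one must show that the family $\{X^{0,u}_x : u \in \mathcal{S}^{2N}\}$ is relatively compact in $C([0,T]:H)$, which typically requires an Arzelà–Ascoli-type argument adapted to the infinite-dimensional setting — one needs both a time-modulus estimate (using the strong continuity of $S(t)$ and the integrability of $s^{-2\alpha}K^2(s)$ to control the Hölder modulus of the stochastic-convolution-type term $\int_0^t S(t-s)G(X^{0,u}_x(s))u(s)\,ds$ via Cauchy–Schwarz: $\left|\int_0^t S(t-s)G u\,ds\right| \leq \left(\int_0^t \|S(t-s)G\|_{L_2}^2 ds\right)^{1/2}|u|_{L^2}$) and some compactness in the $H$-direction. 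The key passage to the limit: given $u_n \rightharpoonup u$ weakly in $L^2([0,T]:U)$, one writes the difference $X^{0,u_n}_x(t) - X^{0,u}_x(t)$ as a sum of a term controlled by $|X^{0,u_n}_x - X^{0,u}_x|$ itself (handled by Gronwall using the Lipschitz assumptions) plus the crucial term $\int_0^t S(t-s)G(X^{0,u}_x(s))(u_n(s) - u(s))\,ds$, which converges to zero because $s \mapsto S(t-s)G(X^{0,u}_x(s))$ defines (for each $t$) a fixed element against which $u_n - u$ is tested weakly; uniformity in $t$ and the compact-embedding argument upgrade this to convergence in $C([0,T]:H)$.

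The main obstacle is exactly this last weak-to-strong passage: showing that the map $u \mapsto X^{0,u}_x$ is continuous from $(\mathcal{S}^{2N}, \text{weak})$ to $(C([0,T]:H), \text{strong})$. The difficulty is that $G$ is only Lipschitz, not compact or finite-rank, so the term $\int_0^t S(t-s)G(X^{0,u}_x(s))(u_n(s)-u(s))\,ds$ does not obviously vanish uniformly in $t$ merely from pointwise-in-$t$ weak convergence — one must establish equicontinuity in $t$ of the family of these integrals (so that pointwise convergence plus equicontinuity gives uniform convergence via a standard Arzelà–Ascoli argument on the scalar-valued moduli), and one must handle the infinite-dimensionality of $H$, perhaps by first showing the limit trajectory lives in a compact set (e.g.\ via smoothing properties if $S(t)$ is compact, or more robustly by the Hölder-in-time bound combined with precompactness of $\{X^{0,u}_x(t) : t \in [0,T], u \in \mathcal{S}^{2N}\}$ in $H$, which itself needs justification). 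I expect the paper to push most of this technical work into Appendix \ref{S:good} and Lemma \ref{lem:a-priori}, invoking the a priori bounds and the integrability \eqref{eq:K-int} as the engine, and to conclude compactness of $\Phi_x(N)$ by the weak-compactness-of-$\mathcal{S}^{2N}$ plus continuity-of-the-solution-map scheme outlined above.
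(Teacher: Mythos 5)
Your overall scheme is the paper's: reduce to compactness of $\Phi_x(N)=\{X^{0,u}_x:u\in\mathcal{S}^{2N}\}$ via Remark \ref{rem:good}, use weak compactness of $\mathcal{S}^{2N}$, decompose $X^{0,u_n}_x-X^{0,u}_x$ into Gronwall-controllable Lipschitz terms plus the single term $\Lambda(X^{0,u}_x)(u_n-u)$, and reduce everything to showing that this last term vanishes in $C([0,T]:H)$. But at exactly the step you yourself flag as ``the main obstacle'' --- upgrading pointwise-in-$t$ weak convergence to uniform convergence in $C([0,T]:H)$ --- you leave a genuine gap: you speculate that one needs an Arzel\`a--Ascoli equicontinuity argument, precompactness of the range $\{X^{0,u}_x(t)\}$ in $H$, or smoothing/compactness of the semigroup $S(t)$. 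None of these is assumed in the paper ($S(t)$ is only a $C_0$ semigroup and $G$ is only Lipschitz), and none is needed.

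The missing idea is in Lemma \ref{lem:Lambda-compact}. First, for each fixed $t$ the operator $\Lambda^\alpha_t(\varphi):u\mapsto\int_0^t(t-s)^{-\alpha}S(t-s)G(\varphi(s))u(s)\,ds$ is Hilbert--Schmidt from $L^2([0,T]:U)$ to $H$: a direct computation with orthonormal bases gives $\|\Lambda^\alpha_t(\varphi)\|^2_{L_2(L^2([0,T]:U),H)}=\int_0^t(t-s)^{-2\alpha}\|S(t-s)G(\varphi(s))\|^2_{L_2}\,ds<+\infty$ by Assumption \ref{assum:G-Lip} and \eqref{eq:K-int}. Hilbert--Schmidt operators are compact, and compact operators map weakly convergent sequences to norm-convergent ones, so $|\Lambda^\alpha_t(\varphi)(u_n-u)|_H\to 0$ for each $t$. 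Second, the uniformity in $t$ comes not from Arzel\`a--Ascoli but from the factorization identity \eqref{eq:Lambda-factor} together with the estimate $|\Lambda(\varphi)(u_n-u)|^p_{C([0,T]:H)}\le C\int_0^T|\Lambda^\alpha_t(\varphi)(u_n-u)|^p_H\,dt$ and the dominated convergence theorem (domination supplied by Lemma \ref{lem:Lambda}). So the compactness that drives the argument is already encoded in the Hilbert--Schmidt hypothesis on $S(t)G(x)$, not in any compactness of $S(t)$, $G$, or the trajectories; without this observation your proposal does not close.
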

The proof is given in Appendix \ref{S:good}.
\vspace{.3cm}

The main theorem of this section is below.
\begin{theorem} \label{thm:example-fwuldp}
  If we assume \eqref{eq:G-bounded}, then let $\mathscr{A}$ be the collection of all subsets of $H$. If we assume \eqref{eq:G-grows}, then let $\mathscr{A}$ be the collection of all bounded subsets of $H$. For any $T>0$, $X^\e_x$ satisfies a FWULDP in $\mathcal{E} = C([0,T]:H)$ with rate function $I_x$ and speed $a(\e) = \e$ uniformly over $\mathscr{A}$.
\end{theorem}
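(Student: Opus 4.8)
The plan is to verify that the family $X^\e_x$, written as $X^\e_x = \mathscr{G}_x(\sqrt{\e}w)$, satisfies Assumption \ref{assum:mathcal-G} with the appropriate choice of $\mathscr{A}$, and then invoke Theorem \ref{thm:eulp-sufficient} to conclude that $X^\e_x$ satisfies an EULP, which by Theorem \ref{thm:FWULDP=EULP} is equivalent to the FWULDP. The rate function in Assumption \ref{assum:mathcal-G} is exactly the $I_x$ defined here (with $\mathscr{G}^0_x$ the solution map at $\e=0$), and Theorem \ref{thm:good-rate-function} confirms that the $I_x$ are good rate functions, so no compatibility issues arise. Thus the entire burden of the proof reduces to establishing: for any $A \in \mathscr{A}$, $N>0$, $\delta>0$,
\[
  \lim_{\e \to 0} \sup_{x \in A}\sup_{u \in \mathcal{P}_2^N}\Pro\left( \bigl|X^{\e,u}_x - X^{0,u}_x\bigr|_{C([0,T]:H)}>\delta\right)=0,
\]
where $X^{\e,u}_x$ and $X^{0,u}_x$ solve \eqref{eq:control-mild} with and without the $\sqrt{\e}$ stochastic term.

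First I would record the a priori bounds: by Lemma \ref{lem:a-priori} (and the companion estimates in Appendix \ref{S:lemmas}), the processes $X^{\e,u}_x$ are well-posed, and $\sup_{\e \in [0,1]}\sup_{u \in \mathcal{P}_2^N}\E\, |X^{\e,u}_x|_{C([0,T]:H)}^p$ is finite, with a bound that grows at most linearly (or is uniformly bounded, under \eqref{eq:G-bounded}) in $|x|_H$; this is where the dichotomy between $\mathscr{A}=$ all subsets and $\mathscr{A}=$ bounded subsets enters. Next I would subtract the two mild equations and estimate $Z^{\e,u}_x(t) := X^{\e,u}_x(t) - X^{0,u}_x(t)$. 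The difference consists of three pieces: the drift difference $\int_0^t S(t-s)[B(X^{\e,u}_x(s))-B(X^{0,u}_x(s))]ds$, the control difference $\int_0^t S(t-s)[G(X^{\e,u}_x(s))-G(X^{0,u}_x(s))]u(s)ds$, and the genuinely small stochastic term $\sqrt{\e}\int_0^t S(t-s)G(X^{\e,u}_x(s))dw(s)$. The first two are handled by the Lipschitz Assumptions \ref{assum:B-Lip} and \ref{assum:G-Lip}(a); for the control term one uses the factorization/Hölder trick exploiting $\int_0^t s^{-2\alpha}K^2(s)\,ds<\infty$ together with $|u|_{L^2}^2 \le N$, producing a bound of the form $\int_0^t \psi(t-s)\,\sup_{r\le s}|Z^{\e,u}_x(r)|_H\,ds$ with an integrable kernel $\psi$. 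The third term is estimated via a maximal/Burkholder-type inequality (again using the factorization method to control the supremum in time) and the a priori moment bound on $X^{\e,u}_x$, giving $\E\sup_{t\le T}\bigl|\sqrt{\e}\int_0^t S(t-s)G(X^{\e,u}_x(s))dw(s)\bigr|_H^p \le C\,\e^{p/2}(1+\sup_{x\in A}|x|_H^p)$ (or $\le C\e^{p/2}$ under \eqref{eq:G-bounded}), which tends to $0$ uniformly over $x \in A$ and $u \in \mathcal{P}_2^N$. A singular Grönwall inequality then absorbs the $Z$-dependent terms and yields $\sup_{x\in A}\sup_{u\in\mathcal{P}_2^N}\E\sup_{t\le T}|Z^{\e,u}_x(t)|_H^p \to 0$, and Chebyshev converts this into the desired probability statement.

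The main obstacle is the uniformity in $u \in \mathcal{P}_2^N$ in the control term $\int_0^t S(t-s)G(X^{\e,u}_x(s))u(s)\,ds$: because $u$ is only $L^2$ in time (not bounded), one cannot naively pull it out of the convolution, and one must combine the Hilbert--Schmidt Lipschitz bound \eqref{eq:G-Lip}(a) with the integrability \eqref{eq:K-int} and Cauchy--Schwarz in a way that keeps the constant depending only on $N$ (and on $\sup_{x\in A}|x|_H$ in the linear-growth case), not on the particular $u$. A secondary technical point is ensuring all the maximal inequalities and Grönwall constants are genuinely uniform over the initial condition $x$ ranging in $A$ — which is automatic for $A$ bounded, and requires the boundedness hypothesis \eqref{eq:G-bounded} to kill the $|x|_H$-dependence when $A$ is allowed to be unbounded. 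Once these uniform estimates are in hand, the conclusion is immediate from Theorems \ref{thm:eulp-sufficient} and \ref{thm:FWULDP=EULP}.
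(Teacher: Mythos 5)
Your proposal is correct and follows essentially the same route as the paper: it reduces the theorem to verifying Assumption \ref{assum:mathcal-G} (which is exactly Theorem \ref{thm:control-conv}), uses the same three-term decomposition into $\Theta$-difference, $\Lambda$-difference, and $\sqrt{\e}\,\Gamma(X^{\e,u}_x)$, controls these via the Lipschitz/growth estimates of Lemmas \ref{lem:Gamma}--\ref{lem:a-priori}, and closes with Gr\"onwall and Chebyshev, with the bounded-versus-arbitrary-subset dichotomy entering through the $|x|_H$-dependence of the stochastic convolution bound exactly as in the paper. The only cosmetic difference is that you invoke a singular Gr\"onwall inequality, whereas the paper absorbs the singular kernel inside the factorization-method lemmas so that an ordinary Gr\"onwall suffices.
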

This theorem demonstrates that compact sets are not required for the large deviations principle to hold. Bounded subsets of infinite dimensional Hilbert spaces are not generally compact. Furthermore, if $G$ is bounded in such a way that \eqref{eq:G-bounded} holds, then the large deviations principle is uniform over all sets of initial conditions, including uniformity over the whole space $H$.

Based on the equivalence of the EULP and the FWULDP (Theorem \ref{thm:FWULDP=EULP}) along with Theorem \ref{thm:eulp-sufficient},  Theorem \ref{thm:example-fwuldp} will be an immediate consequence of the following result.

\begin{theorem} \label{thm:control-conv}
  If we assume \eqref{eq:G-bounded}, then let $\mathscr{A}$ be the collection of all subsets of $H$. If we assume \eqref{eq:G-grows}, then let $\mathscr{A}$ be the collection of all bounded subsets of $H$. For any $T>0$, $N>0$, and $A \in \mathscr{A}$,
  \begin{equation}
    \lim_{\e \to 0} \sup_{x \in A}\sup_{u \in\mathcal{P}_2^N} \Pro\left( \left|X^{\e,u}_x - X^{0,u}_x \right|_{C([0,T]:H)} >\delta \right) = 0.
  \end{equation}
\end{theorem}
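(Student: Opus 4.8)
The plan is to prove the convergence by a Gronwall-type estimate applied to the difference $D^{\e,u}_x(t) := X^{\e,u}_x(t) - X^{0,u}_x(t)$, uniformly in $x \in A$ and $u \in \mathcal{P}_2^N$. Subtracting the two mild equations \eqref{eq:control-mild} (one with $\e$, one with $\e=0$), the $S(t)x$ terms cancel and we are left with three contributions: the drift difference $\int_0^t S(t-s)[B(X^{\e,u}_x(s)) - B(X^{0,u}_x(s))]ds$, the stochastic integral $\sqrt{\e}\int_0^t S(t-s)G(X^{\e,u}_x(s))dw(s)$, and the control-driven difference $\int_0^t S(t-s)[G(X^{\e,u}_x(s)) - G(X^{0,u}_x(s))]u(s)ds$. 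The drift term is handled by Assumption \ref{assum:B-Lip}(a) and boundedness of $S(t)$ on $[0,T]$. The control term is handled by Assumption \ref{assum:G-Lip}(a): using Cauchy-Schwarz in $s$ with the weight $s^{-2\alpha}K^2(s)$ (or more simply $|S(t-s)G(X^{\e,u}_x(s)) - S(t-s)G(X^{0,u}_x(s))|u(s)|_H \le K(t-s)|D^{\e,u}_x(s)|_H |u(s)|_U$), and since $|u|_{L^2([0,T]:U)}^2 \le N$ almost surely, this contributes a term controlled by $\left(\int_0^t K^2(t-s)|D^{\e,u}_x(s)|_H^2 ds\right)^{1/2} N^{1/2}$, again amenable to Gronwall. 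The stochastic term, multiplied by $\sqrt{\e}$, is the small-parameter term: by the Burkholder-Davis-Gundy inequality (or the factorization/stochastic convolution estimates in \cite[Chapter 7]{dpz}) together with the a priori moment bounds on $X^{\e,u}_x$ from Lemma \ref{lem:a-priori}, its $C([0,T]:H)$ norm has $p$-th moment bounded by $C(N,T)\e^{p/2}$, uniformly in $x \in A$ and $u \in \mathcal{P}_2^N$ — this is where the hypothesis \eqref{eq:G-bounded} or \eqref{eq:G-grows} enters, since under \eqref{eq:G-grows} one needs the supremum of $|x|_H$ over $A$ to be finite, i.e. $A$ bounded, whereas under \eqref{eq:G-bounded} no such restriction is needed.

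Concretely, I would fix $p \ge 2$ (or simply $p=2$ if the convolution estimate allows), set $\Psi^{\e,u}_x(t) := \E\left(\sup_{r \le t}|D^{\e,u}_x(r)|_H^p\right)$ wait — since $u$ is random and we want a uniform-in-$u$ bound, it is cleaner to work pathwise as far as possible and take expectations only of the stochastic convolution. So instead define $g^{\e,u}_x(t) := |D^{\e,u}_x|_{C([0,t]:H)}$ and write the pathwise bound
\[
  g^{\e,u}_x(t) \le C_T \int_0^t |D^{\e,u}_x(s)|_H\,ds + \left(\int_0^t K^2(t-s)\,ds\right)^{1/2}\!\!N^{1/2} \sup_{r\le t}|D^{\e,u}_x(r)|_H \cdot (\text{?}) + \sqrt{\e}\, Z^{\e,u}_x,
\]
where $Z^{\e,u}_x := \left|\int_0^\cdot S(\cdot-s)G(X^{\e,u}_x(s))dw(s)\right|_{C([0,T]:H)}$. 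The middle term must be made subdominant: choosing $T$ small is not allowed since $T$ is fixed, so instead I would split $[0,T]$ into finitely many subintervals of length $\tau$ small enough that $\left(\int_0^\tau K^2(s)ds\right)^{1/2}N^{1/2} < 1/2$ (possible since $K$ is locally square integrable, so $\int_0^\tau K^2 \to 0$ as $\tau \to 0$), absorb the middle term, and iterate across subintervals; this is a standard device and gives a pathwise bound $g^{\e,u}_x(T) \le C(N,T)\,\sqrt{\e}\,Z^{\e,u}_x$ modulo a Gronwall factor. Then $\E\, g^{\e,u}_x(T)^p \le C(N,T)\,\e^{p/2}\,\E (Z^{\e,u}_x)^p$, and $\E(Z^{\e,u}_x)^p$ is bounded uniformly in $x\in A, u\in\mathcal{P}_2^N$ by Lemma \ref{lem:a-priori} and the BDG/stochastic-convolution estimate. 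Chebyshev's inequality then gives
\[
  \sup_{x\in A}\sup_{u\in\mathcal{P}_2^N}\Pro\left(|X^{\e,u}_x - X^{0,u}_x|_{C([0,T]:H)} > \delta\right) \le \frac{C(N,T,\delta)\,\e^{p/2}}{1} \to 0.
\]

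The main obstacle I anticipate is controlling the stochastic convolution $\sqrt{\e}\int_0^\cdot S(t-s)G(X^{\e,u}_x(s))dw(s)$ in the \emph{supremum} norm on $C([0,T]:H)$ uniformly over $x$ and $u$, because the semigroup $S(t)$ need not be analytic or contractive and a naive BDG estimate gives only an $L^2$-in-time bound, not a sup-in-time bound. The resolution is the factorization method of Da Prato–Kwapień–Zabczyk (see \cite[Chapter 7]{dpz}): write the stochastic convolution as $S_\alpha \circ Y$ where $Y(s) = \int_0^s (s-r)^{-\alpha}S(s-r)G(X^{\e,u}_x(r))dw(r)$ and $S_\alpha$ is a bounded operator from $L^p([0,T]:H)$ into $C([0,T]:H)$ for $p\alpha > 1$; the condition \eqref{eq:K-int} with $\alpha \in (0,1/2)$ is exactly what makes the moments of $Y$ finite. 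A secondary point to be careful about is that the a priori bounds from Lemma \ref{lem:a-priori} must themselves be uniform in $x \in A$ (and this is precisely where boundedness of $A$ is needed under \eqref{eq:G-grows}, while under \eqref{eq:G-bounded} the bound on $X^{\e,u}_x$ can be taken independent of $x$ entirely). Finally, one should double-check that all constants $C(N,T)$ produced by the Gronwall iteration are genuinely independent of $\e$, $x$, and $u$ — they are, since they depend only on $\kappa$, $K$, $\alpha$, $N$, $T$, and the operator-norm bound $\sup_{t\in[0,T]}\|S(t)\|$.
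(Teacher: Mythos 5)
Your proposal is correct and follows essentially the same route as the paper: subtract the two mild formulations, apply Lipschitz estimates to the drift and control convolutions, close with Gr\"onwall, bound the $\sqrt{\e}$-weighted stochastic convolution in the supremum norm via the factorization method together with the a priori bounds of Lemma \ref{lem:a-priori} (uniform in $x$ under \eqref{eq:G-bounded}, uniform over bounded $A$ under \eqref{eq:G-grows}), and finish with Chebyshev. The only difference is technical and cosmetic: where you handle the singular kernel $K^2(t-s)$ in the control term by subdividing $[0,T]$ into small intervals and iterating, the paper applies the factorization method to $\Lambda$ as well (Lemma \ref{lem:Lambda}, estimate \eqref{eq:Lambda-Lip}), which yields a bound of the form $C|u|_{L^2([0,t]:U)}^p\int_0^t|\varphi-\psi|_{C([0,s]:H)}^p\,ds$ that feeds directly into the standard Gr\"onwall lemma.
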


The proof of Theorem \ref{thm:control-conv} is based on the following lemmas whose proofs we sketch in Appendix \ref{S:lemmas}.
For any $\mathcal{F}_t$-adapted $\varphi \in C([0,T]:H)$ define the stochastic convolution by
\begin{equation}
  \Gamma(\varphi)(t) = \int_0^t S(t-s)G(\varphi(s))dw(s).
\end{equation}
For any $\varphi \in C([0,T]:H)$ and $u \in L^2([0,T]:U)$ define the controlled convolution
\begin{equation} \label{eq:Lambda}
  [\Lambda(\varphi)u](t) = \int_0^t S(t-s)G(\varphi(s))u(s)ds.
\end{equation}
For any $\varphi \in C([0,T]:H)$ define the nonlinear convolution
\begin{equation}
  \Theta(\varphi)(t) = \int_0^t S(t-s)B(\varphi(s))ds.
\end{equation}
In this notation the mild formulation for the stochastic controlled equation \eqref{eq:control-mild} can be written as
\begin{equation} \label{eq:mild-w-symbols}
  X^{\e,u}_x(t) = S(t)x + \Theta(X^{\e,u}_x)(t) + \sqrt{\e}\Gamma(X^{\e,u}_x)(t) + [\Lambda(X^{\e,u}_x)u](t).
\end{equation}

\begin{lemma} \label{lem:Gamma}
  For any $T>0$ and $p> \frac{1}{\alpha}$ where $\alpha$ is  from \eqref{eq:K-int}, there exists a constant $C=C(T,p)$ such that
  \begin{enumerate}
    \item For any $\mathcal{F}_t$-adapted $\varphi, \psi \in C([0,T]:H)$ and  $t \in [0,T]$
    \begin{equation} \label{eq:Gamma-Lip}
      \E|\Gamma(\varphi) - \Gamma(\psi)|_{C([0,t]:H)}^p \leq C \E\int_0^t |\varphi -\psi|_{C([0,s]:H)}^p ds.
    \end{equation}
    \item If \eqref{eq:G-bounded} holds, then for any $\mathcal{F}_t$-adapted $\varphi \in C([0,T]:H)$ and $t \in [0,T]$,
    \begin{equation} \label{eq:Gamma-bound}
      \E|\Gamma(\varphi)|_{C([0,t]:H)}^p \leq C.
    \end{equation}
    \item If \eqref{eq:G-grows} holds, then for any $\mathcal{F}_t$-adapted $\varphi \in C([0,T]:H)$ and $t \in [0,T]$,
    \begin{equation}\label{eq:Gamma-grow}
      \E|\Gamma(\varphi)|_{C([0,t]:H)}^p \leq C \left( 1 + \E\int_0^t |\varphi|_{C([0,s]:H)}^p ds \right).
    \end{equation}
  \end{enumerate}
\end{lemma}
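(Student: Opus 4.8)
The plan is to treat all three estimates simultaneously via the factorization method for stochastic convolutions, since that technique handles both the Lipschitz increment \eqref{eq:Gamma-Lip} and the a priori bounds \eqref{eq:Gamma-bound}--\eqref{eq:Gamma-grow} with essentially the same computation. First I would recall the factorization identity: for $\alpha \in (0,1/2)$ as in \eqref{eq:K-int}, one writes
\[
  \Gamma(\varphi)(t) = \frac{\sin(\pi\alpha)}{\pi} \int_0^t (t-s)^{\alpha - 1} S(t-s) Y(s)\, ds,
  \qquad
  Y(s) = \int_0^s (s-r)^{-\alpha} S(s-r) G(\varphi(r))\, dw(r).
\]
Since $S(t)$ is a $C_0$ semigroup it is uniformly bounded by some $M$ on $[0,T]$, so applying H\"older's inequality in the outer integral (using $p > 1/\alpha$, which guarantees $(t-s)^{\alpha-1} \in L^{p/(p-1)}([0,t])$) gives a pointwise-in-$t$ bound
\[
  |\Gamma(\varphi)(t)|_H^p \leq C(T,p)\, M^p \int_0^t |Y(s)|_H^p\, ds,
\]
and this is already uniform in $t \in [0,T]$, so taking the sup over $[0,t]$ on the left costs nothing extra.

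Next I would estimate $\E|Y(s)|_H^p$ using the Burkholder--Davis--Gundy inequality in the Hilbert space $H$ (for the $U$-cylindrical stochastic integral), which bounds it by $C_p\, \E\bigl(\int_0^s (s-r)^{-2\alpha} \|S(s-r)G(\varphi(r))\|_{L_2}^2\, dr\bigr)^{p/2}$. For the Lipschitz estimate \eqref{eq:Gamma-Lip}, I apply part (a) of Assumption \ref{assum:G-Lip} to get $\|S(s-r)G(\varphi(r)) - S(s-r)G(\psi(r))\|_{L_2} \leq K(s-r)|\varphi(r) - \psi(r)|_H \leq K(s-r)|\varphi - \psi|_{C([0,r]:H)}$; then another application of H\"older in the $dr$-integral, splitting $(s-r)^{-2\alpha}K^2(s-r) = \bigl((s-r)^{-2\alpha}K^2(s-r)\bigr)^{1}$ and using \eqref{eq:K-int} to control the $L^1$ mass of that weight on $[0,T]$, pulls out a constant times $\int_0^s |\varphi-\psi|_{C([0,r]:H)}^p\, dr$. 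Combining with the outer integral and Fubini (all integrands nonnegative) yields \eqref{eq:Gamma-Lip}. For \eqref{eq:Gamma-bound} and \eqref{eq:Gamma-grow} the argument is identical but uses part (b)(i) or (b)(ii) of Assumption \ref{assum:G-Lip} in place of part (a): boundedness gives $\|S(s-r)G(\varphi(r))\|_{L_2} \leq K(s-r)$, hence a deterministic constant after integrating the weight, while linear growth gives $K(s-r)(1+|\varphi(r)|_H) \leq K(s-r)(1 + |\varphi|_{C([0,r]:H)})$, producing the $1 + \E\int_0^t |\varphi|_{C([0,s]:H)}^p\, ds$ on the right of \eqref{eq:Gamma-grow}.

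The main obstacle I anticipate is purely bookkeeping rather than conceptual: keeping track of the two nested applications of H\"older's inequality (one for the deterministic fractional kernel $(t-s)^{\alpha-1}$, one for the singular weight $(s-r)^{-2\alpha}K^2(s-r)$ inside the BDG bound) and verifying that the exponent condition $p > 1/\alpha$ is exactly what is needed to make $(t-s)^{\alpha-1}$ integrable to the conjugate power $p/(p-1)$ — indeed $(\alpha-1)\cdot\frac{p}{p-1} > -1$ is equivalent to $p > 1/\alpha$. One should also confirm that $Y(s)$ is well-defined as an $H$-valued stochastic integral, which follows from $\int_0^s (s-r)^{-2\alpha}K^2(s-r)\,dr < \infty$ by \eqref{eq:K-int} together with the hypothesis that $S(t)G(x) \in L_2(U,H)$; and that $\Gamma(\varphi) \in C([0,T]:H)$, which is the standard consequence of the factorization representation once $Y \in L^p([0,T]:H)$ a.s. None of these steps requires anything beyond the stated assumptions, and the constant $C$ depends only on $T$, $p$, $M$, $\alpha$, and the finite quantity $\int_0^T s^{-2\alpha}K^2(s)\,ds$.
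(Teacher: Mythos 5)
Your proposal is correct and follows essentially the same route as the paper's proof: the factorization identity for the stochastic convolution, the Burkholder--Davis--Gundy inequality on the inner integral, Assumption \ref{assum:G-Lip} together with \eqref{eq:K-int} to control the singular weight, and H\"older's inequality on the outer fractional kernel with the exponent condition $p>1/\alpha$ (which is exactly the content of the cited estimate (5.13) of \cite{dpz}). The only point to be careful about is that in the Lipschitz case the H\"older step leaves the weight $(s-r)^{-2\alpha}K^2(s-r)$ inside the $dr$-integral, and it is only absorbed into the constant after the Fubini exchange with the outer integral — which you correctly invoke.
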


\begin{lemma} \label{lem:Lambda}
  For any $T>0$ and $p> \frac{1}{\alpha}$ where $\alpha$ is  from \eqref{eq:K-int}, there exists $C = C(T,p)$ such that
  \begin{enumerate}
    \item For any $\varphi,\psi \in C([0,T]:H)$, $u \in L^2([0,T]:U)$, and $t \in [0,T]$,
    \begin{equation} \label{eq:Lambda-Lip}
      |\Lambda(\varphi)u - \Lambda(\psi)u|_{C([0,t]:H)}^p \leq C|u|_{L^2([0,t]:U)}^p \int_0^t |\varphi - \psi|_{C([0,s]:H)}^p ds.
    \end{equation}
    \item If \eqref{eq:G-bounded} holds, then for any $\varphi \in C([0,T]:H)$, $u \in L^2([0,T]:U)$, and $t \in [0,T]$,
    \begin{equation} \label{eq:Lambda-bound}
      |\Lambda(\varphi)u|_{C([0,t]:H)}^p \leq C |u|_{L^2([0,t]:U)}^p.
    \end{equation}
    \item If \eqref{eq:G-grows} holds, then for any $\varphi \in C([0,T]:H)$, $u \in L^2([0,T]:U)$, and $t \in [0,T]$,
    \begin{equation} \label{eq:Lambda-grow}
      |\Lambda(\varphi)u|_{C([0,t]:H)}^p \leq C |u|_{L^2([0,t]:U)}^p \left( 1 +  \int_0^t |\varphi|_{C([0,s]:H)}^p ds \right).
    \end{equation}
  \end{enumerate}
\end{lemma}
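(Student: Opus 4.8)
I would prove all three estimates simultaneously using the factorization method; a direct estimate does not suffice. Bounding $|[\Lambda(\varphi)u](t)|_H$ crudely by $\int_0^t K(t-s)|\varphi(s)|_H|u(s)|_U\,ds$ and then isolating $|u|_{L^2}$ by H\"older forces control of $K$ in $L^{2p/(p-2)}([0,T])$, an exponent strictly larger than $2$, whereas Assumption \ref{assum:G-Lip} only gives $K\in L^2_{\mathrm{loc}}$ together with the weighted bound \eqref{eq:K-int}; moreover $L^2$-integrability plus the $s^{-2\alpha}$ weight does not imply $L^m$-integrability for any $m>2$ (place the mass of $K$ away from $0$). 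The factorization identity avoids this by distributing the singularity of $K$ between an inner convolution, absorbed by \eqref{eq:K-int}, and an outer convolution against $(\cdot)^{\alpha-1}$, absorbed by the hypothesis $p>1/\alpha$.

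\textbf{Factorization.} With $\alpha$ as in \eqref{eq:K-int}, from $\int_s^\tau(\tau-\sigma)^{\alpha-1}(\sigma-s)^{-\alpha}\,d\sigma=\pi/\sin(\pi\alpha)$ and $S(\tau-\sigma)S(\sigma-s)=S(\tau-s)$ one writes, for $\tau\in[0,T]$,
\[
[\Lambda(\varphi)u](\tau)=\tfrac{\sin(\pi\alpha)}{\pi}\int_0^\tau(\tau-\sigma)^{\alpha-1}S(\tau-\sigma)\,Y(\sigma)\,d\sigma,\qquad Y(\sigma):=\int_0^\sigma(\sigma-s)^{-\alpha}S(\sigma-s)G(\varphi(s))u(s)\,ds,
\]
and the analogous formula for $\Lambda(\varphi)u-\Lambda(\psi)u$ with $G(\varphi(s))-G(\psi(s))$ in place of $G(\varphi(s))$ inside $Y$. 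Since only $S(r)G(x)$ with $r>0$ is assumed Hilbert--Schmidt, the identity is applied with the semigroup always acting; the triangular Fubini interchange is licit because $\|S(\tau-s)(G(\varphi(s))-G(\psi(s)))\|_{L_2}\le K(\tau-s)|\varphi(s)-\psi(s)|_H$ and $K\in L^2_{\mathrm{loc}}$, so the relevant integrand lies in $L^1$ on $\{0<s<\sigma<\tau\}$ by Cauchy--Schwarz.

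\textbf{Estimating the two convolutions.} For the inner term, using $\|M\|_{\mathrm{op}}\le\|M\|_{L_2}$, Assumption \ref{assum:G-Lip}(a), Cauchy--Schwarz, and the fact that $D(\sigma):=|\varphi-\psi|_{C([0,\sigma]:H)}$ is nondecreasing, one gets, with $M:=\big(\int_0^T r^{-2\alpha}K(r)^2\,dr\big)^{1/2}<\infty$,
\[
|Y(\sigma)|_H\le D(\sigma)\Big(\int_0^\sigma(\sigma-s)^{-2\alpha}K(\sigma-s)^2\,ds\Big)^{1/2}|u|_{L^2([0,\sigma]:U)}\le M\,D(\sigma)\,|u|_{L^2([0,t]:U)}
\]
for $\sigma\le t$; the bounded case \eqref{eq:G-bounded} gives instead $|Y(\sigma)|_H\le M|u|_{L^2([0,t]:U)}$, and the growth case \eqref{eq:G-grows} gives $|Y(\sigma)|_H\le M(1+|\varphi|_{C([0,\sigma]:H)})|u|_{L^2([0,t]:U)}$. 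For the outer term, with $q=p/(p-1)$ and $M_0:=\sup_{r\in[0,T]}\|S(r)\|_{\mathrm{op}}<\infty$, H\"older's inequality gives
\[
|[\Lambda(\varphi)u](\tau)|_H\le\tfrac{\sin(\pi\alpha)}{\pi}M_0\Big(\int_0^\tau(\tau-\sigma)^{(\alpha-1)q}\,d\sigma\Big)^{1/q}\Big(\int_0^\tau|Y(\sigma)|_H^p\,d\sigma\Big)^{1/p},
\]
and the condition $(\alpha-1)q>-1$, i.e.\ $p>1/\alpha$, makes $\int_0^\tau(\tau-\sigma)^{(\alpha-1)q}\,d\sigma\le C(T,p)<\infty$. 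Substituting the bounds on $|Y|$, raising to the power $p$, using $(a+b)^p\le2^{p-1}(a^p+b^p)$ in the growth case, and taking $\sup_{\tau\in[0,t]}$ (every quantity on the right being nondecreasing in $\tau$) yields \eqref{eq:Lambda-Lip}, \eqref{eq:Lambda-bound}, and \eqref{eq:Lambda-grow} respectively, with a constant $C=C(T,p)$ assembled from $M$, $M_0$, and $\sin(\pi\alpha)$.

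\textbf{Main obstacle.} The one non-mechanical point is recognizing that the naive bound genuinely cannot be closed and that the factorization exponent must be \emph{the same} $\alpha$ appearing in \eqref{eq:K-int}: that choice is exactly what converts the inner term into the weighted integral controlled by \eqref{eq:K-int}, while $p>1/\alpha$ is precisely the threshold at which the outer time integral $\int_0^\tau(\tau-\sigma)^{(\alpha-1)p/(p-1)}\,d\sigma$ converges. Everything after that is bookkeeping of constants, and the three parts differ only in which of the bounds on $|Y(\sigma)|_H$ is inserted.
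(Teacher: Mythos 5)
Your proof is correct and follows essentially the same route as the paper's sketch: the factorization identity with the same $\alpha$ as in \eqref{eq:K-int}, the inner convolution controlled via the Hilbert--Schmidt bound from Assumption \ref{assum:G-Lip} together with Cauchy--Schwarz and the weighted integral \eqref{eq:K-int}, and the outer convolution handled by H\"older with exponent $p/(p-1)$ under $p>1/\alpha$ (the paper simply cites \cite[equation (5.13)]{dpz} for this last step, which you prove directly, and you additionally spell out the Fubini justification the paper leaves implicit).
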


\begin{lemma} \label{lem:Theta}
  For any $T>0$ and $p> 1$, there exists $C = C(T,p)$ such that
  \begin{enumerate}
    \item For any $\varphi, \psi \in C([0,T]:H)$ and $t \in [0,T]$,
    \begin{equation} \label{eq:Theta-Lip}
      |\Theta(\varphi) - \Theta(\psi)|_{C([0,t]:H)}^p \leq C \int_0^t |\varphi - \psi|_{C([0,s]:H)}^p ds.
    \end{equation}
    \item For any $\varphi \in C([0,T]:H)$ and $t \in [0,T]$,
    \begin{equation} \label{eq:Theta-grow}
      |\Theta(\varphi)|_{C([0,t]:H)}^p \leq C \left( 1 + \int_0^t |\varphi|_{C([0,s]:H)}^p ds \right).
    \end{equation}
  \end{enumerate}
\end{lemma}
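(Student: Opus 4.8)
The plan is to establish both inequalities directly from the defining formula $\Theta(\varphi)(t) = \int_0^t S(t-s)B(\varphi(s))\,ds$, using only that $S(\cdot)$ is a $C_0$ semigroup together with Assumption \ref{assum:B-Lip}. Nothing probabilistic enters; these are deterministic pointwise bounds. The first preliminary step is to record that a $C_0$ semigroup satisfies $\|S(t)\|_{\mathcal L(H)} \le Me^{\omega t}$, so it is uniformly bounded on the compact interval $[0,T]$: set $M_T := \sup_{0\le t\le T}\|S(t)\|_{\mathcal L(H)} < +\infty$.

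For part (1), fix $\varphi,\psi \in C([0,T]:H)$ and $r \in [0,t]$. Estimating the Bochner integral and invoking the Lipschitz bound on $B$,
\[
  |\Theta(\varphi)(r) - \Theta(\psi)(r)|_H \le \int_0^r \|S(r-s)\|_{\mathcal L(H)}\,|B(\varphi(s)) - B(\psi(s))|_H\,ds \le M_T\kappa\int_0^r |\varphi(s)-\psi(s)|_H\,ds .
\]
Since $s\mapsto |\varphi-\psi|_{C([0,s]:H)}$ is nondecreasing and $|\varphi(s)-\psi(s)|_H \le |\varphi-\psi|_{C([0,s]:H)}$, one has $\int_0^r |\varphi(s)-\psi(s)|_H\,ds \le \int_0^t |\varphi-\psi|_{C([0,s]:H)}\,ds$ for every $r\le t$, so taking the supremum over $r\in[0,t]$ gives $|\Theta(\varphi)-\Theta(\psi)|_{C([0,t]:H)} \le M_T\kappa \int_0^t |\varphi-\psi|_{C([0,s]:H)}\,ds$. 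Raising to the $p$-th power and applying Jensen's inequality (equivalently Hölder) in the form $\big(\int_0^t f\,ds\big)^p \le t^{p-1}\int_0^t f^p\,ds \le T^{p-1}\int_0^t f^p\,ds$ yields \eqref{eq:Theta-Lip} with $C = (M_T\kappa)^p\,T^{p-1}$.

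Part (2) runs along identical lines, with the linear-growth bound $|B(x)|_H \le \kappa(1+|x|_H)$ replacing the Lipschitz bound:
\[
  |\Theta(\varphi)(r)|_H \le M_T\kappa\int_0^r (1+|\varphi(s)|_H)\,ds \le M_T\kappa\Big(T + \int_0^t |\varphi|_{C([0,s]:H)}\,ds\Big),
\]
and then taking the supremum over $r\in[0,t]$, raising to the $p$-th power, and combining $(a+b)^p\le 2^{p-1}(a^p+b^p)$ with the same Jensen step gives \eqref{eq:Theta-grow} for a suitable $C=C(T,p)$.

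I do not expect a genuine obstacle here. The only points needing care are the uniform operator-norm bound for the semigroup on $[0,T]$ and the repeated use of Jensen/Hölder to upgrade the natural $L^1$-in-time estimate to the $L^p$-in-time form whose integrand is $|\varphi-\psi|_{C([0,s]:H)}^p$ (resp.\ $|\varphi|_{C([0,s]:H)}^p$) — this is precisely the shape required so that the three convolution lemmas feed cleanly into the Grönwall-type argument behind Theorem \ref{thm:control-conv}. The hypothesis $p>1$ is exactly what makes the exponent $t^{p-1}$ legitimate.
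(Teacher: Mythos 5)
Your proof is correct and follows essentially the same route as the paper's own sketch: bound $\|S(t-s)\|_{\mathscr{L}(H)}$ uniformly on $[0,T]$ (a standard $C_0$-semigroup fact), apply the Lipschitz/linear-growth bounds of Assumption \ref{assum:B-Lip}, and upgrade the resulting $L^1$-in-time estimate to the stated $L^p$ form via H\"older/Jensen, which is exactly where $p>1$ enters. The only detail the paper mentions that you omit — continuity of $t\mapsto\Theta(\varphi)(t)$ — is not needed for the stated inequalities, so nothing is missing.
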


\begin{lemma} \label{lem:a-priori}
  Assuming either \eqref{eq:G-bounded} or \eqref{eq:G-grows}, for any $x \in H$, $\e>0$, $N>0$, and $u \in \mathcal{P}_2^N$, there exists a unique, continuous $\mathcal{F}_t$-adapted solution to \eqref{eq:control-mild}. Furthermore, for any $T>0$ and $p > \frac{1}{\alpha}$, there exists $C=C(T,p)$ such that for any $\e>0$, $N>0$, and $R>0$,  $X^{\e,u}_x$ satisfies the bound
  \begin{equation} \label{eq:a-priori}
    \sup_{|x|_H \leq R} \sup_{u \in \mathcal{P}_2^N}\E |X^{\e,u}_x|_{C([0,T]:H)}^p \leq C(1 +R^p + N^{\frac{p}{2}}+\e^{\frac{p}{2}}) e^{CT( 1 + N^{\frac{p}{2}}+\e^{\frac{p}{2}})}.
  \end{equation}
\end{lemma}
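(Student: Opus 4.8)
The plan is to prove well-posedness by a standard Picard iteration / fixed-point argument on the mild formulation \eqref{eq:mild-w-symbols}, and then extract the a priori bound \eqref{eq:a-priori} by a Gronwall-type estimate applied to the map $t \mapsto \E|X^{\e,u}_x|_{C([0,t]:H)}^p$. First I would fix $x \in H$, $\e>0$, $N>0$, and $u \in \mathcal{P}_2^N$, choose $p > \tfrac{1}{\alpha}$, and set up the iteration $X^{(0)} \equiv S(\cdot)x$ and $X^{(n+1)} = S(\cdot)x + \Theta(X^{(n)}) + \sqrt{\e}\,\Gamma(X^{(n)}) + \Lambda(X^{(n)})u$. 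Using the Lipschitz estimates \eqref{eq:Gamma-Lip}, \eqref{eq:Lambda-Lip} (with $|u|_{L^2([0,T]:U)}^2 \leq N$ a.s. on $\mathcal{P}_2^N$), and \eqref{eq:Theta-Lip}, one gets for $m_n(t) := \E|X^{(n+1)} - X^{(n)}|_{C([0,t]:H)}^p$ an inequality of the form $m_n(t) \leq C_{T,p,N,\e} \int_0^t m_{n-1}(s)\,ds$, which iterates to $m_n(T) \leq (C_{T,p,N,\e}\,T)^n m_0(T)/n! \to 0$; this gives a Cauchy sequence in $L^p(\Omega; C([0,T]:H))$ whose limit solves \eqref{eq:control-mild}. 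Uniqueness follows from the same Lipschitz bound and Gronwall. (One should also check the iterates are genuinely $C([0,T]:H)$-valued and $\mathcal{F}_t$-adapted, which follows from the corresponding properties of $\Theta$, $\Gamma$, $\Lambda$ and of $S(\cdot)x$.)

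For the quantitative bound I would work directly with $\phi(t) := \E|X^{\e,u}_x|_{C([0,t]:H)}^p$, which is finite by the previous step. Apply the growth estimates: $|S(\cdot)x|_{C([0,t]:H)}^p \leq M^p|x|_H^p$ where $M = \sup_{s\in[0,T]}\|S(s)\|$; \eqref{eq:Theta-grow} for $\Theta$; \eqref{eq:Gamma-bound} or \eqref{eq:Gamma-grow} for $\sqrt{\e}\,\Gamma$ (the $\sqrt{\e}$ contributing an $\e^{p/2}$ factor); and \eqref{eq:Lambda-bound} or \eqref{eq:Lambda-grow} for $\Lambda(X^{\e,u}_x)u$, bounding $|u|_{L^2([0,t]:U)}^p \leq N^{p/2}$ a.s. Using the elementary inequality $|a+b+c+d|^p \leq 4^{p-1}(|a|^p+|b|^p+|c|^p+|d|^p)$ and taking expectations, in the linear-growth case \eqref{eq:G-grows} one arrives at
\[
  \phi(t) \leq C\bigl(1 + R^p + N^{p/2} + \e^{p/2}\bigr) + C\bigl(1 + N^{p/2} + \e^{p/2}\bigr)\int_0^t \phi(s)\,ds,
\]
uniformly over $|x|_H \leq R$ and $u \in \mathcal{P}_2^N$; Gronwall's inequality then yields exactly \eqref{eq:a-priori}. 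In the bounded case \eqref{eq:G-bounded} the $\Gamma$ and $\Lambda$ terms contribute constants rather than integral terms, so the argument is only easier and gives a bound of the same form.

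The main obstacle — or at least the point requiring the most care — is a circularity/finiteness issue: to run the Gronwall step one needs $\phi(t) < \infty$ a priori, and to invoke the $C([0,t]:H)$-moment estimates of Lemmas \ref{lem:Gamma}--\ref{lem:Theta} one needs the solution to already exist in the right space. This is handled by ordering the argument correctly (first establish existence/uniqueness via the contraction argument, which also gives $X^{\e,u}_x \in L^p(\Omega; C([0,T]:H))$, hence $\phi(T) < \infty$; only then run Gronwall), or alternatively by a stopping-time truncation $\tau_R = \inf\{t : |X^{\e,u}_x|_{C([0,t]:H)} \geq R\}$, deriving the bound for $\phi(t\wedge\tau_R)$ with an $R$-independent constant and letting $R \to \infty$ by monotone convergence. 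A secondary technical point is uniformity of all constants over $u \in \mathcal{P}_2^N$: this is ensured because the only place $u$ enters the estimates is through $|u|_{L^2}^2 \leq N$ a.s., so every constant depends on $u$ only through $N$.
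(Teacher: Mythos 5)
Your proposal is correct and follows exactly the route the paper takes (the paper only sketches it, stating that existence/uniqueness follows by Picard iteration from Lemmas \ref{lem:Gamma}--\ref{lem:Theta} and that the bound \eqref{eq:a-priori} is a straightforward Gr\"onwall argument). Your treatment of the finiteness issue needed to justify the Gr\"onwall step, and of the uniformity of constants over $u \in \mathcal{P}_2^N$, supplies details the paper omits but does not change the argument.
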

Lemma \ref{lem:a-priori} is a straightforward consequence of Lemmas \ref{lem:Gamma}, \ref{lem:Lambda}, and \ref{lem:Theta}. The existence and uniqueness proof is a standard argument based on Picard iteration. The $L^p$ bound proof is a straightforward application of Gr\"onwall's inequality.

\begin{proof}[Proof of Theorem \ref{thm:control-conv}]
  Fix $T>0$, $p>\frac{1}{\alpha}$, and $N>0$. In this proof, $C$ represents an arbitrary constant independent of $\e,x,u$ and $N$ whose value will change from line to line. By the notation of \eqref{eq:mild-w-symbols}, for any $x \in H$, $\e>0$, and $u \in \mathcal{P}_2^N$,
  \[X^{\e,u}_x - X^{0,u}_x= \Theta(X^{\e,u}_x) - \Theta(X^{0,u}_x) + \sqrt{\e} \Gamma(X^{\e,u}_x) + \Lambda(X^{\e,u}_x) - \Lambda(X^{0,u}_x). \]

  It follows from \eqref{eq:Lambda-Lip} and \eqref{eq:Theta-Lip} that for any $t \in [0,T]$,
  \begin{align*}
    |X^{\e,u}_x - X^{0,u}_x|_{C([0,t]:H)}^p \leq &C\e^{\frac{p}{2}}|\Gamma(X^{\e,u}_x)|_{C([0,t]:H)}^p \\
    &+ C(1+N^{p/2}) \int_0^t |X^{\e,u}_x - X^{0,u}_x|_{C([0,s]:H)}^p ds.
  \end{align*}
  By Gr\"onwall's inequality,
  \begin{equation} \label{eq:gronwall}
    |X^{\e,u}_x - X^{0,u}_x|_{C([0,T]:H)}^p \leq C\e^{\frac{p}{2}} e^{C(1+N^{p/2})T} |\Gamma(X^{\e,u}_x)|_{C([0,T]:H)}^p.
  \end{equation}

  If we assume \eqref{eq:G-bounded} so that $G$ is bounded, then \eqref{eq:Gamma-bound} holds. Consequently,
  \[\E|X^{\e,u}_x - X^{0,u}_x|_{C([0,T]:H)}^p \leq C \e^{\frac{p}{2}}e^{C(1+N^{p/2})T} .\]
  This bound is independent of $x \in H$. Therefore,
  \[\lim_{\e \to 0} \sup_{x \in H} \sup_{u \in \mathcal{P}_2^N}\E|X^{\e,u}_x - X^{0,u}_x|_{C([0,T]:H)}^p =0. \]
  The result follows by Chebyshev inequality.

  On the other hand, if we assume \eqref{eq:G-grows},  then  \eqref{eq:a-priori} and \eqref{eq:Gamma-grow} imply that if we restrict $x$ to bounded sets that $\E|\Gamma(X^{\e,u}_x)|_{C([0,T]:H)}^p$ will be bounded. In particular for $R>0$ and $N>0$ it follows from \eqref{eq:gronwall} that
  \[\lim_{ \e \to 0} \sup_{|x|_H \leq R} \sup_{u \in \mathcal{P}_2^N} \E |X^{\e,u}_x - X^{0,u}_x|_{C([0,T]:H)}^p =0.\]
  The result follows by Chebyshev inequality.
\end{proof}

\section{Equivalence of the FWULDP and the ULP -- Proof of Theorem \ref{thm:FWULDP=ULP}} \label{S:Proof-FWULDP=ULP}
  For this section, assume that Assumption \ref{assum:compact-level-union} holds. As we showed in Theorem \ref{thm:ulp-counter}, the lack of uniform continuity of the bounded continuous function $h$, leads to counterexamples where the FWULDP holds but the ULP does not.
  The compactness of $\bigcup_{x \in A} \Phi_x(s)$ in Assumption \ref{assum:compact-level-union} is important assumption because continuous functions are uniformly continuous over compact sets.
  \begin{lemma} \label{lem:unif-cont}
    Let $K \subset \mathcal{E}$ be compact and let $h: \mathcal{E} \to \mathbb{R}$ be a continuous function. Then $h$ is uniformly continuous near $K$ in the sense that for any $\eta>0$ there exists $\delta>0$ such that for all $\varphi \in K$ and $\psi \in \mathcal{E}$ such that $\rho(\psi,\varphi)<\delta$, it follows that $|h(\varphi) - h(\psi)|< \eta$.
  \end{lemma}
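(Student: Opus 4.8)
The plan is to argue by contradiction using the sequential compactness of $K$, which is the cleanest route in a metric space. Suppose the conclusion fails for some $\eta>0$. Then for each $n \in \nat$ there exist $\varphi_n \in K$ and $\psi_n \in \mathcal{E}$ with $\rho(\psi_n,\varphi_n) < 1/n$ but $|h(\varphi_n) - h(\psi_n)| \geq \eta$. Since $K$ is compact (hence sequentially compact, as $\mathcal{E}$ is metric), pass to a subsequence along which $\varphi_n \to \varphi$ for some $\varphi \in K$. By the triangle inequality $\rho(\psi_n,\varphi) \leq \rho(\psi_n,\varphi_n) + \rho(\varphi_n,\varphi) \to 0$, so $\psi_n \to \varphi$ as well.

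Now invoke continuity of $h$ at the single point $\varphi$: both $h(\varphi_n) \to h(\varphi)$ and $h(\psi_n) \to h(\varphi)$, hence $|h(\varphi_n) - h(\psi_n)| \to 0$, contradicting $|h(\varphi_n) - h(\psi_n)| \geq \eta$ for all $n$. This establishes the existence of a suitable $\delta$.

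An equivalent direct argument, which I would mention as an alternative, avoids subsequences: for each $\varphi \in K$ use continuity to pick $\delta_\varphi>0$ with $|h(\psi) - h(\varphi)| < \eta/2$ whenever $\rho(\psi,\varphi) < \delta_\varphi$; the balls $\{B(\varphi,\delta_\varphi/2)\}_{\varphi \in K}$ cover $K$, extract a finite subcover indexed by $\varphi_1,\dots,\varphi_m$, and set $\delta = \tfrac{1}{2}\min_{1\le i\le m}\delta_{\varphi_i}$. For $\varphi \in K$ and $\psi$ with $\rho(\psi,\varphi)<\delta$, choose $i$ with $\rho(\varphi,\varphi_i) < \delta_{\varphi_i}/2$; then $\rho(\psi,\varphi_i) \leq \rho(\psi,\varphi) + \rho(\varphi,\varphi_i) < \delta + \delta_{\varphi_i}/2 \leq \delta_{\varphi_i}$, so both $|h(\varphi) - h(\varphi_i)| < \eta/2$ and $|h(\psi) - h(\varphi_i)| < \eta/2$, and the triangle inequality gives $|h(\varphi) - h(\psi)| < \eta$.

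There is no real obstacle here — this is a standard "uniform continuity in a neighborhood of a compact set" fact, and the only thing to be slightly careful about is that $\psi$ ranges over all of $\mathcal{E}$ (not just $K$), which is exactly why one inflates the covering radii by a factor of two (or, in the contradiction argument, why one needs $\psi_n \to \varphi$ to follow from $\rho(\psi_n,\varphi_n)\to 0$).
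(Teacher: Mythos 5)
Your proof is correct; the paper itself omits the proof of this lemma, stating only that the result is classical. Both of your arguments (the contradiction via sequential compactness and the finite-subcover argument with radii halved to accommodate $\psi$ ranging over all of $\mathcal{E}$) are standard and complete, so there is nothing to compare against and nothing to fix.
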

  We omit the proof because this result is classical.
  \vspace{.3cm}

  \begin{lemma} \label{lem:ulp->fw-low}
    Under Assumption \ref{assum:compact-level-union}, the ULP implies the FWULDP lower bound \eqref{eq:fwuldp-lower}.
  \end{lemma}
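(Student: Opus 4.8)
The plan is to derive the FWULDP lower bound \eqref{eq:fwuldp-lower} from the ULP by choosing, for each fixed $\varphi_0$ in a level set, a suitable bounded continuous test function $h$ that ``localizes'' near $\varphi_0$, and then estimating the Laplace-type quantity $a(\e)\log\E\exp(-h(X^\e_x)/a(\e))$ from above by a multiple of $\Pro(\rho(X^\e_x,\varphi_0)<\delta)$. Concretely, fix $A\in\mathscr{A}$, $s_0>0$, and $\delta>0$, and suppose toward establishing the bound that we want to show $\e\log\Pro(\rho(X^\e_x,\varphi)<\delta)+I_x(\varphi)$ is asymptotically nonnegative uniformly over $x\in A$ and $\varphi\in\Phi_x(s_0)$. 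Given $M>0$ large, define $h(\psi)=M\bigl(1\wedge \tfrac{1}{\delta}\rho(\psi,\varphi_0)\bigr)$ — no wait, the test function must be chosen uniformly in $x$ and $\varphi$, so more carefully: I would argue by contradiction. If the lower bound fails, there exist $\eta>0$, a sequence $\e_n\to0$, points $x_n\in A$, and $\varphi_n\in\Phi_{x_n}(s_0)$ with $\e_n\log\Pro(\rho(X^{\e_n}_{x_n},\varphi_n)<\delta)+I_{x_n}(\varphi_n)\le-\eta$. By Assumption \ref{assum:compact-level-union}, $\bigcup_{x\in A}\Phi_x(s_0)$ is pre-compact, so (passing to a subsequence) $\varphi_n\to\varphi_\infty$ in $\mathcal{E}$.

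The key step is then to use a single test function built from $\varphi_\infty$: let $h(\psi)=M\min\{1,\tfrac{2}{\delta}\rho(\psi,\varphi_\infty)\}$ for a parameter $M$ to be chosen, which is bounded and (Lipschitz, hence) continuous, with $h(\varphi_\infty)=0$. For $n$ large enough that $\rho(\varphi_n,\varphi_\infty)<\delta/2$, the event $\{\rho(X^{\e_n}_{x_n},\varphi_n)<\delta/2\}$ forces $\rho(X^{\e_n}_{x_n},\varphi_\infty)<\delta$, hence $h(X^{\e_n}_{x_n})\le M\cdot 1$ is not quite tight; better to use the event $\{\rho(X^{\e_n}_{x_n},\varphi_\infty)<\delta/2\}$ directly, on which $h\le M$, and bound
\[
\E\exp\!\Bigl(-\tfrac{h(X^{\e_n}_{x_n})}{\e_n}\Bigr)\ \le\ e^{-M/\e_n}+\Pro\bigl(\rho(X^{\e_n}_{x_n},\varphi_\infty)<\delta/2\bigr),
\]
wait — that inequality is backwards in the sign of the localization. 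The correct split: on $\{\rho(X^{\e_n}_{x_n},\varphi_\infty)\ge\delta\}$ we have $h(X^{\e_n}_{x_n})\ge M$ so $\exp(-h/\e_n)\le e^{-M/\e_n}$; on the complement $\{\rho<\delta\}$ we bound $\exp(-h/\e_n)\le1$. Hence $\E\exp(-h(X^{\e_n}_{x_n})/\e_n)\le e^{-M/\e_n}+\Pro(\rho(X^{\e_n}_{x_n},\varphi_\infty)<\delta)$. Since (for large $n$) $\rho(\varphi_n,\varphi_\infty)<\delta-\delta=0$... the radius bookkeeping needs care: run the argument with radius $\delta/2$ in the hypothesis and $\delta$ in $h$, so that $\{\rho(X,\varphi_n)<\delta/2\}\subset\{\rho(X,\varphi_\infty)<\delta\}$ eventually, giving $\Pro(\rho(X^{\e_n}_{x_n},\varphi_\infty)<\delta)\ge\Pro(\rho(X^{\e_n}_{x_n},\varphi_n)<\delta/2)$.

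Putting this together with the contradiction hypothesis (stated for radius $\delta/2$): $\e_n\log\E\exp(-h(X^{\e_n}_{x_n})/\e_n)\le \e_n\log\bigl(e^{-M/\e_n}+e^{(-\eta-I_{x_n}(\varphi_n))/\e_n}\bigr)$. Choosing $M>s_0$, the second term dominates for small $\e_n$ (since $I_{x_n}(\varphi_n)\le s_0<M$), giving $\limsup_n\bigl(\e_n\log\E\exp(-h(X^{\e_n}_{x_n})/\e_n)+\inf_\psi\{h(\psi)+I_{x_n}(\psi)\}\bigr)\le -\eta + \limsup_n\inf_\psi\{h(\psi)+I_{x_n}(\psi)\}$, and since $\inf_\psi\{h(\psi)+I_{x_n}(\psi)\}\le h(\varphi_n)+I_{x_n}(\varphi_n)\le h(\varphi_n)+s_0$ with $h(\varphi_n)\to h(\varphi_\infty)=0$, one can arrange the whole expression to be bounded away from $0$ for all large $n$ provided $\eta$ beats the residual $s_0$ — which it need not. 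The honest fix, and the step I expect to be the genuine obstacle, is to choose $h$ scaled so that $h(\varphi_n)$ is small \emph{relative to} $\eta$ while still $M>s_0$; this requires the localization radius in $h$ to shrink with the contradiction data, and one must verify that a single $L$-free continuous $h$ (not an equicontinuous family) suffices here, which it does because $\varphi_\infty$ is a single fixed point. I would therefore take $h(\psi)=(s_0+1)\min\{1,\tfrac{1}{\delta}\rho(\psi,\varphi_\infty)\}$ combined with the fact that along the subsequence $h(\varphi_n)\to0$, so $\inf_\psi\{h+I_{x_n}\}\to$ a quantity $\le s_0$, and then run the ULP with this $h$ to get $\e_n\log\E\exp(-h(X^{\e_n}_{x_n})/\e_n)\to -\inf_\psi\{h+I_{x_n}\}$ uniformly, contradicting the lower bound of $-\eta-s_0+o(1)$ only if $\eta$ is large — so one must instead first reduce to the case $\varphi_n=\varphi_\infty$ exactly (legitimate since $\rho(X,\varphi_n)<\delta/2\Rightarrow\rho(X,\varphi_\infty)<\delta$ and $I_{x_n}(\varphi_n)$ vs $I_{x_n}(\varphi_\infty)$ differ by a controlled amount via lower semicontinuity — actually $\varphi_\infty$ may not lie in $\Phi_{x_n}(s_0)$, so one simply works with $I_{x_n}(\varphi_n)$ throughout and never needs $I_{x_n}(\varphi_\infty)$). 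The clean conclusion: with $M=s_0+1$ and radius $\delta/2$, for large $n$ the ULP forces $\e_n\log\E\exp(-h(X^{\e_n}_{x_n})/\e_n)\ge -h(\varphi_n)-I_{x_n}(\varphi_n)-o(1)\ge -o(1)-I_{x_n}(\varphi_n)$, while the event estimate forces it $\le \e_n\log\bigl(2e^{(-\eta-I_{x_n}(\varphi_n))/\e_n}\bigr)= -\eta-I_{x_n}(\varphi_n)+o(1)$; comparing gives $-\eta\ge -o(1)$, i.e. $\eta\le 0$, the desired contradiction. The main obstacle is thus purely the careful radius/scale bookkeeping so that $h(\varphi_n)\to0$ and the ``bad-event'' exponential $e^{-M/\e_n}$ is negligible against $e^{-I_{x_n}(\varphi_n)/\e_n}$ uniformly in $n$; pre-compactness of $\bigcup_{x\in A}\Phi_x(s_0)$ is exactly what lets us extract $\varphi_\infty$ and use one fixed continuous test function.
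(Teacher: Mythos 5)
Your final argument is correct and is essentially the paper's proof: pre-compactness of $\bigcup_{x\in A}\Phi_x(s_0)$ yields a convergent subsequence $\varphi_n\to\tilde\varphi$, a single fixed test function $h=j\min\{1,2\rho(\cdot,\tilde\varphi)/\delta\}$ with $j>s_0$ gives $\E\exp(-h(X^{\e_n}_{x_n})/a(\e_n))\le e^{-j/a(\e_n)}+\Pro(\rho(X^{\e_n}_{x_n},\varphi_n)<\delta)$ once $\rho(\varphi_n,\tilde\varphi)<\delta/2$, and the ULP applied to this one $h$ together with $h(\varphi_n)\to 0$ and $I_{x_n}(\varphi_n)\le s_0<j$ closes the argument (the paper runs this directly over arbitrary sequences rather than by contradiction, a cosmetic difference). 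One mid-proof slip worth fixing: the needed containment is $\{\rho(\cdot,\tilde\varphi)<\delta/2\}\subset\{\rho(\cdot,\varphi_n)<\delta\}$ — the $h$-localization ball is the \emph{smaller} one, centered at the limit point — not the reversed inclusion ``radius $\delta/2$ in the hypothesis and $\delta$ in $h$'' you wrote mid-stream; your final ``clean conclusion'' uses the correct configuration.
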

  \begin{proof}
     Assume that $X^\e_x$ satisfies a ULP with respect to rate function $I_x$ with speed $a(\e)$ uniformly over $\mathscr{A}$. Fix $A \in \mathscr{A}$, $s_0>0$, and $\delta>0$. Let $x_n \in A$, $\varphi_n \in \Phi_{x_n}(s_0)$ and  $ \e_n \downarrow 0$ be arbitrary sequences. By Assumption \ref{assum:compact-level-union}, $\bigcup_{x \in A}\Phi_x(s_0)$ is a precompact set. Therefore, there exists a subsequence (relabeled $(x_n, \varphi_n, \e_n)$) and a limit $\tilde\varphi \in \mathcal{E}$ such that $\varphi_n \to \tilde \varphi$ in $\mathcal{E}$. There exists $N \in \mathbb{N}$ such that for all $n \geq N$, $\rho(\varphi_n, \tilde \varphi)< \frac{\delta}{2}$. Then if $n\geq N$, $\{\varphi\in \mathcal{E}: \rho(\varphi, \tilde \varphi)< \frac{\delta}{2}\} \subset \{\varphi\in \mathcal{E}: \rho(\varphi, \varphi_n) < \delta\}$. Consequently, for $n \geq N$,
     \begin{equation} \label{eq:tilde-varphi-ineq}
         \Pro(\rho(X^{\e_n}_{x_n}, \tilde \varphi)<\delta/2) \leq \Pro(\rho(X^{\e_n}_{x_n}, \varphi_n)< \delta).
     \end{equation}
     Let $j>s_0$ and define the bounded continuous function $h: \mathcal{E} \to \mathbb{R}$ by
     \begin{equation} \label{eq:h-def-fw-low}
       h(\psi) = j \min \left\{ \frac{2 \rho(\psi, \tilde \varphi)}{\delta},1 \right\}.
     \end{equation}
     This function has the properties that $h \geq 0$, $h(\tilde{\varphi}) = 0$,  and   $h(\psi) = j$ if $\rho(\psi, \tilde\varphi)\geq\frac{\delta}{2}$. Combining this observation with \eqref{eq:tilde-varphi-ineq}, it follows that for any $n \geq N$,
     \begin{equation*}
       \E \exp \left( -\frac{h(X^{\e_n}_{x_n})}{a(\e_n)} \right) \leq e^{-\frac{j}{a(\e_n)}} + \Pro(\rho(X^{\e_n}_{x_n}, \varphi_n)< \delta)
     \end{equation*}
     and
     \begin{align} \label{eq:exp-upper-bound}
      &a(\e_n) \log \E \exp \left( -\frac{h(X^{\e_n}_{x_n})}{a(\e_n)} \right) \nonumber\\
      &\leq a(\e_n) \log (2) + \max\{ -j, a(\e) \log \Pro(\rho(X^{\e_n}_{x_n}, \varphi_n)< \delta)\}.
     \end{align}

     Next we observe that because $\varphi_n \in \mathcal{E} $,
     \begin{equation} \label{eq:inf-h-I-upper-bound}
       \inf_{\varphi \in \mathcal{E}} \{h(\varphi) + I_{x_n}(\varphi)\} \leq h(\varphi_n) + I_{x_n}(\varphi_n).
     \end{equation}

     Combining \eqref{eq:exp-upper-bound} and \eqref{eq:inf-h-I-upper-bound},
     \begin{align*}
       & \liminf_{n \to +\infty}\left( \max\{ -j, a(\e) \log \Pro(\rho(X^{\e_n}_{x_n}, \varphi_n)<\delta)\} + h(\varphi_n) + I_{x_n}(\varphi_n)\right)\\
       &\geq \liminf_{n \to +\infty}\left( a(\e_n) \log \E \left( -\frac{h(X^{\e_n}_{x_n})}{a(\e_n)} \right) + \inf_{\varphi \in \mathcal{E}} \{h(\varphi) + I_{x_n}(\varphi) \}\right)\\
       &\geq 0.
     \end{align*}
     The last inequality follows from \eqref{eq:ulp} because we assumed that $X^\e_x$ satisfies a ULP. By the continuity of $h$, $\lim_{n \to +\infty} h(\varphi_n) = h(\tilde\varphi) = 0$. We recall that $I_{x_n}(\varphi_n) \leq s_0$ and that $j>s_0$ implying that $-j + \liminf_{n \to +\infty} I_{x_n}(\varphi_n) < 0$.  From these observation and the above display we conclude that
     \[\liminf_{ n \to+\infty} \left(\Pro(\rho(X^{\e_n}_{x_n}, \varphi_n)< \delta) + I_{x_n}(\varphi_n) \right) \geq 0.\]
     Because the sequences $(x_n, \varphi_n, \e_n)$ were arbitrary, the FWULDP lower bound \eqref{eq:fwuldp-lower} follows.
  \end{proof}

  \begin{lemma} \label{lem:ulp->fw-up}
    Under Assumption \ref{assum:compact-level-union}, the ULP implies the FWULDP upper bound \eqref{eq:fwuldp-upper}.
  \end{lemma}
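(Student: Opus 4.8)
The plan is to mirror the argument of Lemma~\ref{lem:ulp->fw-low}, but now exploiting the upper bound structure of the FWULDP. Fix $A \in \mathscr{A}$, $s_0 > 0$, and $\delta > 0$, and suppose toward a contradiction that \eqref{eq:fwuldp-upper} fails. Then there exist sequences $\e_n \downarrow 0$, $x_n \in A$, and $s_n \in [0,s_0]$ together with some $\gamma > 0$ such that
\[
  a(\e_n) \log \Pro(\dist(X^{\e_n}_{x_n}, \Phi_{x_n}(s_n)) \geq \delta) + s_n \geq \gamma
\]
for all $n$. Passing to a subsequence we may assume $s_n \to s_* \in [0,s_0]$. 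The quantity we need to control is $\Pro(\dist(X^{\e_n}_{x_n}, \Phi_{x_n}(s_n)) \geq \delta)$, and the idea is to bound this probability below by an exponential moment of a well-chosen test function, then invoke the ULP \eqref{eq:ulp} for that test function with index $x_n$.

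The key construction is the test function. Since $\bigcup_{x\in A}\Phi_x(s_0)$ is precompact by Assumption~\ref{assum:compact-level-union}, let $K$ be its closure, a compact set, and let $K^{\delta/2} = \{\varphi \in \mathcal{E} : \dist(\varphi, K) \leq \delta/2\}$ be a closed $\delta/2$-neighborhood. Pick $j > s_0$ and define $h : \mathcal{E} \to \mathbb{R}$ by
\[
  h(\psi) = j \min\left\{ \frac{2\,\dist(\psi, K)}{\delta}, 1 \right\},
\]
so that $h \geq 0$, $h \equiv 0$ on $K$ (hence on every $\Phi_{x_n}(s_n) \subset K$ since $s_n \leq s_0$), and $h \equiv j$ on the complement of $K^{\delta/2}$. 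On the event $\{\dist(X^{\e_n}_{x_n}, \Phi_{x_n}(s_n)) \geq \delta\}$, the point $X^{\e_n}_{x_n}$ is at distance at least $\delta$ from $\Phi_{x_n}(s_n)$; I want to arrange that this forces $X^{\e_n}_{x_n}$ far from the whole of $K$. This is the delicate point: being far from $\Phi_{x_n}(s_n)$ does not obviously mean being far from $K$. To handle this I would instead use the level-set upper bound in the cleaner form: on that event, $-h(X^{\e_n}_{x_n})$ is not automatically $-j$, so I need a sharper choice. The remedy is to center the test function at $\Phi_{x_n}(s_n)$ itself, not at $K$: define $h_n(\psi) = j\min\{2\dist(\psi,\Phi_{x_n}(s_n))/\delta, 1\}$. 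Then $\{h_n = 0\} \supseteq \Phi_{x_n}(s_n)$ and $h_n \equiv j$ on $\{\dist(\cdot, \Phi_{x_n}(s_n)) \geq \delta/2\}$, so on the bad event $-h_n(X^{\e_n}_{x_n}) = -j$. The family $L = \{h_n\}_n$ is equibounded and equi-Lipschitz, hence equicontinuous; but the ULP only gives uniformity over a single $h$, not over $L$. However, since each $\Phi_{x_n}(s_n)$ lies in the fixed compact $K$, one checks that on $K^{\delta/4}$ all the $h_n$ can be dominated by the single function $h$ above up to a controlled error, which is enough to push the ULP estimate through; alternatively, one reduces to finitely many $x_n$ by compactness of $K$ and a covering argument. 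Either way, the main obstacle is precisely this passage from a uniform statement over test functions centered at a moving set to the ULP for a fixed test function.

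Granting that, the endgame is routine. From $-h_n(X^{\e_n}_{x_n}) \leq -j$ on the bad event and $h_n \geq 0$ elsewhere,
\[
  \E\exp\!\left(-\frac{h_n(X^{\e_n}_{x_n})}{a(\e_n)}\right) \geq e^{-j/a(\e_n)} \Pro(\dist(X^{\e_n}_{x_n}, \Phi_{x_n}(s_n)) \geq \delta),
\]
wait, the inequality goes the wrong way for a lower bound on the probability; instead one writes the probability of the bad event times $e^{-j/a(\e_n)}$ as a lower bound for the exponential moment restricted to that event, giving
\[
  a(\e_n)\log \E\exp\!\left(-\frac{h_n(X^{\e_n}_{x_n})}{a(\e_n)}\right) \geq -j + a(\e_n)\log \Pro(\dist(X^{\e_n}_{x_n},\Phi_{x_n}(s_n)) \geq \delta).
\]
On the other hand, $\inf_{\varphi}\{h_n(\varphi) + I_{x_n}(\varphi)\} \leq h_n(\varphi) + I_{x_n}(\varphi)$ for any $\varphi \in \Phi_{x_n}(s_n)$, so this infimum is at most $s_n$ (choosing $\varphi$ with $I_{x_n}(\varphi) \leq s_n$, which exists when $\Phi_{x_n}(s_n) \neq \emptyset$; the empty case is trivial since then the probability is zero). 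Combining with the ULP \eqref{eq:ulp}, which forces $a(\e_n)\log\E\exp(-h_n(X^{\e_n}_{x_n})/a(\e_n)) \to -\inf_\varphi\{h_n(\varphi)+I_{x_n}(\varphi)\}$ uniformly, we get
\[
  \limsup_{n\to\infty}\left(a(\e_n)\log\Pro(\dist(X^{\e_n}_{x_n},\Phi_{x_n}(s_n)) \geq \delta) + s_n\right) \leq j - \limsup_n \inf_\varphi\{h_n + I_{x_n}\} + (s_n - s_n)\cdots \leq 0,
\]
after arranging the bookkeeping with $j > s_0 \geq s_n$ so the $j$-terms cancel against the gap, contradicting $\geq \gamma > 0$. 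Thus \eqref{eq:fwuldp-upper} holds.
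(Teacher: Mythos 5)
There are two genuine gaps here, and the first is fatal to the computation as written. Your test function $h_n(\psi)=j\min\{2\dist(\psi,\Phi_{x_n}(s_n))/\delta,1\}$ has the wrong orientation for an upper bound. Since $\inf_\varphi I_{x_n}(\varphi)=0$ (Lemma \ref{lem:inf-I}), the level set $\Phi_{x_n}(s_n)$ contains near-minimizers of $I_{x_n}$, and $h_n$ vanishes on all of $\Phi_{x_n}(s_n)$; hence $\inf_\varphi\{h_n(\varphi)+I_{x_n}(\varphi)\}=0$ (for $s_n>0$), not merely ``$\leq s_n$'' in any useful sense. The ULP then only tells you $a(\e_n)\log\E\exp(-h_n(X^{\e_n}_{x_n})/a(\e_n))\to 0$, and your inequality $a(\e_n)\log\E\exp(\cdots)\geq -j+a(\e_n)\log\Pro(\cdots)$ yields only $a(\e_n)\log\Pro(\cdots)\lesssim j$, which is vacuous. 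Your final ``bookkeeping'' step cannot be rescued: $j-\inf_\varphi\{h_n(\varphi)+I_{x_n}(\varphi)\}\geq j-s_n>0$, so nothing cancels. What is needed is the complementary bump used in the paper's \eqref{eq:h-def-fw-up}, namely $h(\psi)=j-j\min\{2\dist(\psi,B)/\delta,1\}$, equal to $j$ \emph{on} the level set and $0$ on the far event. With that orientation one gets both $\Pro(\dist(X^{\e_n}_{x_n},\Phi_{x_n}(s_n))\geq\delta)\leq\E\exp(-h(X^{\e_n}_{x_n})/a(\e_n))$ (because $h=0$ where the distance is large) and $\inf_\varphi\{h(\varphi)+I_{x_n}(\varphi)\}\geq s_n$ (because $h>s_0\geq s_n$ on $\Phi_{x_n}(s_n)$ and $I_{x_n}>s_n$ off it), and the ULP then closes the argument.

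The second gap is the one you flag yourself and leave unresolved: the ULP \eqref{eq:ulp} applies to a single fixed $h$, while your $h_n$ move with $n$. The paper removes the $n$-dependence by noting that the closed subsets of the compact set $\overline{\bigcup_{x\in A}\Phi_x(s_0)}$ form a compact metric space under the Hausdorff metric \eqref{eq:Hausdorff}, extracting a subsequence with $\lambda(\Phi_{x_n}(s_n),B)\to 0$ for some closed $B$, and centering one fixed test function at $B$; the Hausdorff convergence is exactly what transfers the two key inequalities from $\Phi_{x_n}(s_n)$ to $B$ for large $n$. Your proposed remedies do not substitute for this: ``dominating all $h_n$ by one $h$'' cannot simultaneously preserve both inequalities (they pull in opposite directions), and ``reducing to finitely many $x_n$ by compactness of $K$'' is not available since $\mathcal{E}_0$ carries no topology here --- it is the level sets, not the indices, that must be compactified. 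So the proposal correctly identifies the difficulty, but neither the test function nor the compactness step is carried out correctly.
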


  \begin{proof}
    Assume that $X^\e_x$ satisfies a ULP with respect to rate function $I_x$ with speed $a(\e)$ uniformly over $\mathscr{A}$.
    Fix $A \in \mathscr{A}$, $\delta>0$, and $s_0>0$. Let $x_n \in A$, $s_n\in [0,s_0]$,  $\e_n\downarrow 0$ be arbitrary sequences. By Assumption \ref{assum:compact-level-union}, $\bigcup_{x \in A} \Phi_x(s_0)$ is a precompact set. This means that the collection of closed subsets of its closure $\overline{\bigcup_{ x \in A} \Phi_x(s_0)}$ form a compact metric space under the Hausdorff metric \eqref{eq:Hausdorff}. Because we assumed that each $I_x$ is a lower-semicontinuous rate function, it follows that for each $n \in \mathbb{N}$, $\Phi_{x_n}(s_n)$ is a closed subset of $\overline{\bigcup_{x \in A}\Phi_{x_n}(s_0)}$. By the compactness of the Hausdorff metric space, there exists a subsequence (relabeled $(x_n, s_n, \e_n)$) and a closed set $B \subset \overline{\bigcup_{x \in A} \Phi_x(s_0)}$ such that
    $
      \lim_{n \to +\infty} \lambda(\Phi_{x_n}(s_n), B) = 0.
    $
    There must exist $N_1 \in \mathbb{N}$ such that for all $n \geq N_1$, it follows that
    $
      \lambda(\Phi_{x_n}(s_n), B)< \frac{\delta}{2}.
    $

    A consequence of this is that when $n \geq N_1$,
    \[\{\varphi \in \mathcal{E}: \dist(\varphi, \Phi_{x_n}(s_n))\geq \delta\} \subset\{\varphi \in \mathcal{E}: \dist(\varphi, B)\geq \frac{\delta}{2}.\}\] Therefore,
    \begin{equation} \label{eq:B-ineq}
      \Pro(\dist(X^{\e_n}_{x_n}, \Phi_{x_n}(s_n)) \geq \delta) \leq \Pro(\dist(X^{\e_n}_{x_n}, B)\geq \delta/2).
    \end{equation}

    Now we define a bounded continuous function $h: \mathcal{E} \to \mathbb{R}$. Let $j>s_0$ and define
    \begin{equation} \label{eq:h-def-fw-up}
      h(\psi) = j - j \min\left\{\frac{2\dist(\psi,B)}{\delta}, 1  \right\}.
    \end{equation}
    This function has the properties that $h(\psi) = 0$ if $\dist(\psi, B) \geq \frac{\delta}{2}$ and $h(\psi) = j$ if $\psi \in B$. One consequence of these properties is that for  $n \geq N_1$
    \begin{equation} \label{eq:exp-lower-bound}
      \Pro(\dist(X^{\e_n}_{x_n}, B) \geq \delta/2) \leq \E \exp \left( - \frac{h(X^{\e_n}_{x_n})}{a(\e_n)} \right).
    \end{equation}
    Combining \eqref{eq:B-ineq} and \eqref{eq:exp-lower-bound}, for $n \geq N_1$,
    \begin{equation} \label{eq:exp-upper-lower-2}
      a(\e_n) \log \Pro(\dist(X^{\e_n}_{x_n}, \Phi_{x_n}(s_n)) \geq \delta) \leq a(\e_n) \log \E \exp\left( - \frac{h(X^{\e_n}_{x_n})}{a(\e_n)} \right).
    \end{equation}

    Because $j$ was chosen to be larger than $s_0$ and $\lambda(\Phi_{x_n}(s_n), B)$ converges to zero, there exists $N_2\geq N_1$ such that for all $n \geq N_2$, $$j- j \delta^{-1}2\lambda(\Phi_{x_n}(s_n), B)> s_0.$$
    If $n \geq N_2$, then for any $\varphi \in \Phi_{x_n}(s_n)$  the definition of the Hausdorff metric guarantees that $\dist(\varphi, B) \leq \lambda(\Phi_{x_n}(s_n), B)$ and that
    \[h(\varphi) = j - j\min\left\{ \frac{2\dist(\varphi, B)}{\delta},1 \right\} \geq j - j\delta^{-1} 2\lambda(\Phi_{x_n}(s_n), B) > s_0 \geq s_n.\]
    On the other hand, if $\varphi \not \in \Phi_{x_n}(s_n)$ then $h(\varphi) \geq 0$ (because it is always positive) and $I_{x_n}(\varphi_n) \geq s_n$.
    From these observations it follows that for $n \geq N_2$,
    \begin{equation} \label{eq:inf-h-I-lower-bound}
      \inf_{\varphi \in \mathcal{E}} \{h(\varphi) + I_{x_n}(\varphi)\} \geq s_n.
    \end{equation}

    Combining \eqref{eq:exp-upper-lower-2} and \eqref{eq:inf-h-I-lower-bound},
    \begin{align*}
      &\limsup_{n \to +\infty} \left(a(\e_n) \log \Pro(\dist(X^{\e_n}_{x_n}, \Phi_{x_n}(s_n))\geq \delta) + s_n \right)\\
      &\geq \limsup_{n \to +\infty} \left(a(\e_n) \log \E \exp \left(- \frac{h(X^{\e_n}_{x_n})}{a(\e_n)} \right) + \inf_{\varphi \in \mathcal{E}} \{h(\varphi) + I_{x_n}(\varphi)\} \right)\\
      &\geq 0.
    \end{align*}
    The last inequality follows because we assumed that $X^\e_x$ satisfies a ULP \eqref{eq:ulp}. We can conclude that the FWULDP upper bound \eqref{eq:fwuldp-upper} follows because the original sequences $(x_n, s_n, \e_n)$ were arbitrary.
  \end{proof}

  \begin{lemma} \label{lem:fw->ulp-low}
    Under Assumption \ref{assum:compact-level-union}, the FWULDP implies that for any $A \in \mathscr{A}$ and any bounded, continuous $h: \mathcal{E} \to \mathbb{R}$.
    \begin{equation} \label{eq:ulp-lower}
      \liminf_{\e \to 0} \inf_{x \in A} \left( a(\e) \log \E \exp \left(- \frac{h(X^\e_x)}{a(\e)} \right) + \inf_{\varphi \in \mathcal{E}}\{h(\varphi)+ I_x(\varphi)\} \right) \geq 0.
    \end{equation}
  \end{lemma}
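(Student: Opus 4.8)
The plan is to run the classical Varadhan-type lower bound argument, localized around near-minimizers of $h+I_x$, using the FWULDP lower bound \eqref{eq:fwuldp-lower} together with the uniform continuity of $h$ on the precompact set $\bigcup_{x\in A}\Phi_x(s_0)$ supplied by Assumption \ref{assum:compact-level-union} and Lemma \ref{lem:unif-cont}.

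First I would fix $A\in\mathscr{A}$ and bounded continuous $h$, set $M:=\|h\|_{C(\mathcal{E})}$, fix an arbitrary $\eta\in(0,1)$, and put $s_0:=2M+1$. For each $x\in A$ I split into two cases. If $\inf_{\varphi\in\mathcal{E}}\{h(\varphi)+I_x(\varphi)\}\geq M$, then since $-h(X^\e_x)/a(\e)\geq -M/a(\e)$ pointwise we get $a(\e)\log\E\exp(-h(X^\e_x)/a(\e))\geq -M$, so the bracketed quantity in \eqref{eq:ulp-lower} is already $\geq 0$ for such $x$ with no further work. Otherwise $\inf_\varphi\{h+I_x\}<M$, and I choose $\varphi_x\in\mathcal{E}$ with $h(\varphi_x)+I_x(\varphi_x)<\inf_\varphi\{h+I_x\}+\eta$; since $h(\varphi_x)\geq -M$ this forces $I_x(\varphi_x)<2M+\eta<s_0$, i.e. $\varphi_x\in\Phi_x(s_0)$.

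By Assumption \ref{assum:compact-level-union}, $K:=\overline{\bigcup_{x\in A}\Phi_x(s_0)}$ is compact, so Lemma \ref{lem:unif-cont} gives $\delta>0$ (depending only on $\eta,A,s_0,h$) such that $|h(\varphi)-h(\psi)|<\eta$ whenever $\varphi\in K$ and $\rho(\varphi,\psi)<\delta$. For $x$ in the second case, bounding the expectation below by its restriction to the event $\{\rho(X^\e_x,\varphi_x)<\delta\}$, on which $h(X^\e_x)<h(\varphi_x)+\eta$ since $\varphi_x\in K$, yields
\[a(\e)\log\E\exp\!\left(-\tfrac{h(X^\e_x)}{a(\e)}\right)\geq -h(\varphi_x)-\eta+a(\e)\log\Pro(\rho(X^\e_x,\varphi_x)<\delta).\]
Adding $\inf_\varphi\{h+I_x\}$ and using $\inf_\varphi\{h+I_x\}>h(\varphi_x)+I_x(\varphi_x)-\eta$, the $h(\varphi_x)$ terms cancel and one obtains
\[a(\e)\log\E\exp\!\left(-\tfrac{h(X^\e_x)}{a(\e)}\right)+\inf_{\varphi\in\mathcal{E}}\{h(\varphi)+I_x(\varphi)\}\geq a(\e)\log\Pro(\rho(X^\e_x,\varphi_x)<\delta)+I_x(\varphi_x)-2\eta.\]

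Finally I take the infimum over $x\in A$. Combining the two cases and recalling $\varphi_x\in\Phi_x(s_0)$ in the second, $\inf_{x\in A}(\text{bracket})$ is bounded below by the minimum of $0$ and $\inf_{x\in A}\inf_{\varphi\in\Phi_x(s_0)}\bigl(a(\e)\log\Pro(\rho(X^\e_x,\varphi)<\delta)+I_x(\varphi)\bigr)-2\eta$. Letting $\e\to0$ and invoking \eqref{eq:fwuldp-lower} with this fixed $s_0$ and $\delta$ gives $\liminf_{\e\to0}\inf_{x\in A}(\text{bracket})\geq -2\eta$, and since $\eta$ was arbitrary the lemma follows. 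I expect the only real subtlety — rather than a genuine obstacle — to be the case split: a near-minimizer of $h+I_x$ need not lie in any fixed level set until one disposes of the $x$'s with $\inf_\varphi\{h+I_x\}$ large, where the trivial bound $\E\exp(-h(X^\e_x)/a(\e))\geq e^{-M/a(\e)}$ suffices; the rest is the standard Laplace-lower-bound localization made uniform by the precompactness in Assumption \ref{assum:compact-level-union}.
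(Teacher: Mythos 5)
Your proof is correct, and the core of it — localizing the Laplace functional on the event $\{\rho(X^\e_x,\varphi_x)<\delta\}$ around a near-minimizer $\varphi_x$ of $h+I_x$, using the precompactness of $\bigcup_{x\in A}\Phi_x(s_0)$ and Lemma \ref{lem:unif-cont} to get a uniform $\delta$, and then invoking the FWULDP lower bound \eqref{eq:fwuldp-lower} — is exactly the paper's argument. The one place you deviate is in how you ensure $\varphi_x$ lies in a fixed level set $\Phi_x(s_0)$: the paper invokes Lemma \ref{lem:inf-bound}, which rests on the fact that $\inf_{\varphi\in\mathcal{E}}I_x(\varphi)=0$ (Lemma \ref{lem:inf-I}, itself a consequence of the assumed large deviations bounds), whereas you avoid that machinery entirely by splitting off the $x$ with $\inf_\varphi\{h(\varphi)+I_x(\varphi)\}\geq\|h\|_{C(\mathcal{E})}$ and handling them with the trivial pointwise bound $a(\e)\log\E\exp(-h(X^\e_x)/a(\e))\geq-\|h\|_{C(\mathcal{E})}$. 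Your case split is a clean, self-contained substitute: it buys you independence from the normalization $\inf I_x=0$, at the cost of a slightly less sharp statement about where near-minimizers live (the paper's Lemma \ref{lem:inf-bound} is reused elsewhere, e.g.\ in Lemmas \ref{lem:fw->ulp-up}, \ref{lem:fw->eulp-low}, and \ref{lem:fw->eulp-up}, so the paper gets more mileage out of proving it once). Both routes are valid and yield the same constant-tracking ($-2\eta$ versus the paper's $-\eta$ from using $\eta/2$ twice), which is immaterial since $\eta$ is arbitrary.
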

  \begin{proof}
    Assume that $X^\e_x$ satisfies a FWULDP with respect to rate function $I_x$ with speed $a(\e)$ uniformly over $\mathscr{A}$.
    Fix $A \in \mathscr{A}$ and a bounded continuous $h: \mathcal{E} \to \mathbb{R}$. Let $\eta>0$ be arbitrary.  For each $x \in A$, there exists $\varphi_x \in \mathcal{E}$ such that
    \begin{equation} \label{eq:phi-x-choice}
       h(\varphi_x) + I_{x}(\varphi_x)  \leq \inf_{\varphi \in \mathcal{E}} \{h(\varphi) + I_{x_n}(\varphi)\} +  \frac{\eta}{2}.
    \end{equation}
    Let $s_0 = 2\|h\|_{C(\mathcal{E})} +\eta/2$. By Lemma \ref{lem:inf-bound} $\varphi_x \in \Phi_{x}(s_0)$ for all $x \in A$.

    By Assumption \ref{assum:compact-level-union} and Lemma \ref{lem:unif-cont}, there exists $\delta>0$ such that for all $\varphi \in \bigcup_{x \in A} \Phi_x(s_0)$ and $\psi \in \mathcal{E}$ such that $\rho(\varphi, \psi)< \delta$, it follows that $|h(\varphi) - h(\psi)|< \eta/2$. Consequently, for any $\e>0$ and $x \in A$,
    \begin{align} \label{eq:exp-lower-bound-fw}
      &\E \exp \left( -\frac{h(X^{\e}_{x})}{a(\e)} \right) \geq \E \left[\exp\left(-\frac{h(X^{\e}_{x})}{a(\e)} \right) \mathbbm{1}_{\{\rho(X^{\e}_{x},\varphi_x)<\delta\}} \right] \nonumber\\
      &\geq \exp\left(-\frac{(h(\varphi_x) + \eta/2)}{a(\e)}   \right) \Pro(\rho(X^{\e}_{x},\varphi_x)<\delta).
    \end{align}

    By the FWULDP lower bound \eqref{eq:fwuldp-lower} and by \eqref{eq:phi-x-choice} and \eqref{eq:exp-lower-bound-fw},
    \begin{align*}
      &\liminf_{\e \to 0} \inf_{x \in A} \left( a(\e) \log \E \exp \left( -\frac{h(X^{\e}_{x})}{a(\e)} \right) + \inf_{\varphi \in \mathcal{E}}\{h(\varphi) + I_{x}(\varphi)\} \right)\\
      &\geq \liminf_{\e \to 0}\inf_{x \in A} \Big(-h(\varphi_x) - \eta/2 + a(\e) \log \Pro(\rho(X^{\e}_{x},\varphi_x)< \delta)\\
      &\hspace{3cm} + h(\varphi_x) +I_{x}(\varphi_x) - \eta/2\Big)\\
      &\geq \liminf_{\e\to 0} \inf_{x \in A} \inf_{\varphi \in \Phi_x(s_0)} \left(a(\e) \log \Pro(\rho(X^\e_x, \varphi)<\delta) + I_x(\varphi) \right)-\eta\\
      & \geq -\eta.
    \end{align*}
    Because $\eta>0$ was arbitrary, \eqref{eq:ulp-lower} follows.
  \end{proof}

  \begin{lemma} \label{lem:fw->ulp-up}
    Under Assumption \ref{assum:compact-level-union}, the FWULDP implies that for any $A \in \mathscr{A}$ and any bounded, continuous $h: \mathcal{E} \to \mathbb{R}$.
    \begin{equation} \label{eq:ulp-upper}
      \limsup_{\e \to 0} \sup_{x \in A} \left( a(\e) \log \E \exp \left(- \frac{h(X^\e_x)}{a(\e)} \right) + \inf_{\varphi \in \mathcal{E}}\{h(\varphi)+ I_x(\varphi)\} \right) \leq 0.
    \end{equation}
  \end{lemma}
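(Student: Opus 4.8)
The plan is to deduce the Laplace upper bound \eqref{eq:ulp-upper} from the FWULDP upper bound \eqref{eq:fwuldp-upper} by the usual device of splitting $\E\exp(-h(X^\e_x)/a(\e))$ over the nested closed neighborhoods of the level sets $\Phi_x(s)$, with all thresholds chosen uniformly in $x\in A$. Write $M:=\|h\|_{C(\mathcal{E})}$ and $L_x:=\inf_{\varphi\in\mathcal{E}}\{h(\varphi)+I_x(\varphi)\}$; it is enough to show that for each fixed $\eta>0$,
\[
\limsup_{\e\to0}\,\sup_{x\in A}\Bigl(a(\e)\log\E\exp\bigl(-h(X^\e_x)/a(\e)\bigr)+L_x\Bigr)\le 3\eta,
\]
since letting $\eta\downarrow0$ then gives \eqref{eq:ulp-upper}. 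First I would record a consequence of the standing hypothesis that $X^\e_x$ satisfies a FWULDP: if $\Phi_x(s)=\emptyset$ for some $x\in A$ and some $s>0$, then $\dist(X^\e_x,\Phi_x(s))=+\infty$ for every $\e$, so the bracketed quantity in \eqref{eq:fwuldp-upper} (taken with any $s_0\ge s$) is at least $s>0$ for all $\e$, contradicting \eqref{eq:fwuldp-upper}. Hence $\Phi_x(s)\ne\emptyset$ for all $x\in A$ and $s>0$, so $\inf_\varphi I_x(\varphi)=0$ and therefore $-M\le L_x\le M$ \emph{uniformly} in $x\in A$. This uniform two-sided bound on $L_x$ is precisely what allows $x$-independent choices of all thresholds below.

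Next I fix the parameters in the right order. Put $s_0:=2M+1$. By Assumption \ref{assum:compact-level-union} the set $\bigcup_{x\in A}\Phi_x(s_0)$ is pre-compact, so Lemma \ref{lem:unif-cont} supplies $\delta>0$ such that $\varphi\in\bigcup_{x\in A}\Phi_x(s_0)$ and $\rho(\varphi,\psi)<\delta$ force $|h(\varphi)-h(\psi)|<\eta$. Pick $n_0\in\NN$ with $\Delta:=s_0/n_0<\eta$ and set $s_k:=k\Delta$ for $0\le k\le n_0$. By the FWULDP upper bound \eqref{eq:fwuldp-upper}, applied with this $A$, this $\delta$ and this $s_0$, there is $\e_0>0$ so that for $\e<\e_0$, all $x\in A$ and all $s\in[0,s_0]$,
\[
a(\e)\log\Pro\bigl(\dist(X^\e_x,\Phi_x(s))\ge\delta\bigr)\le -s+\eta .
\]

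For $\e<\e_0$ and $x\in A$ I decompose $\Omega$ using the nonincreasing quantities $d_k:=\dist(X^\e_x,\Phi_x(s_k))$: let $A_0=\{d_1<\delta\}$, let $A_k=\{d_{k+1}<\delta\le d_k\}$ for $1\le k\le n_0-1$, and let $A_{n_0}=\{d_{n_0}\ge\delta\}$; these sets are measurable (each $\Phi_x(s_k)$ is closed by lower semicontinuity of $I_x$) and partition $\Omega$. On $A_k$ with $0\le k\le n_0-1$ there is $\varphi\in\Phi_x(s_{k+1})\subset\Phi_x(s_0)$ with $\rho(X^\e_x,\varphi)<\delta$, hence $h(X^\e_x)\ge h(\varphi)-\eta\ge L_x-I_x(\varphi)-\eta\ge L_x-s_{k+1}-\eta$. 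On $A_k$ with $1\le k\le n_0$ the probability estimate gives $\Pro(A_k)\le\Pro(d_k\ge\delta)\le e^{(-s_k+\eta)/a(\e)}$, while on $A_{n_0}$ I use only $h\ge -M$ together with $\Pro(A_{n_0})\le e^{(-s_0+\eta)/a(\e)}$.

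Multiplying the two estimates on each $A_k$ and using $s_{k+1}=s_k+\Delta$, the $s_k$ telescope away and every term of the decomposition is bounded by $e^{(-L_x+\Delta+2\eta)/a(\e)}$ — for the $A_{n_0}$ term one invokes $M-s_0\le -L_x$, valid because $s_0=2M+1\ge M+L_x$. Summing the $n_0+1$ terms and taking $a(\e)\log(\cdot)$ yields
\[
a(\e)\log\E\exp\bigl(-h(X^\e_x)/a(\e)\bigr)+L_x\le a(\e)\log(n_0+1)+\Delta+2\eta
\]
for all $\e<\e_0$ and all $x\in A$; since the right-hand side is independent of $x$, taking $\sup_{x\in A}$ and then $\limsup_{\e\to0}$ (so that $a(\e)\log(n_0+1)\to0$) gives the displayed bound $\le\Delta+2\eta\le 3\eta$, as required. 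The only genuinely delicate point is organizational: keeping the quantifiers in the order $s_0$ (depends on $\|h\|$) $\to$ $\delta, n_0$ (depend on $\eta, s_0$) $\to$ $\e_0$ (from the FWULDP), and checking that the telescoping of the $s_k$ really makes each slice of the decomposition comparable to $e^{-L_x/a(\e)}$ up to the $O(\eta)$ errors.
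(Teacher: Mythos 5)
Your proof is correct and follows essentially the same route as the paper: a shell decomposition indexed by distance to the nested level sets $\Phi_x(k\Delta)$, the FWULDP upper bound applied on each shell, and uniform continuity of $h$ near the pre-compact set $\bigcup_{x\in A}\Phi_x(s_0)$ to compare $h(X^\e_x)$ with $\inf_\varphi\{h(\varphi)+I_x(\varphi)\}$. The only differences are bookkeeping (you telescope the $s_k$ directly instead of separating a max over probabilities from a min over $h$-infima, and you derive the nonemptiness of the level sets and the bound $|L_x|\le\|h\|_{C(\mathcal{E})}$ inline rather than citing Lemmas \ref{lem:inf-I} and \ref{lem:inf-bound}).
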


  \begin{proof}
    Assume that $X^\e_x$ satisfies a FWULDP with respect to rate function $I_x$ with speed $a(\e)$ uniformly over $\mathscr{A}$.
    Fix a bounded continuous $h: \mathcal{E} \to \mathbb{R}$ and $\eta>0$. Let $s_0 = 2\|h\|_{C(\mathcal{E})}+\eta$. By Assumption \ref{assum:compact-level-union} and Lemma \ref{lem:unif-cont}, there exists $\delta>0$ such that for all $\varphi \in \bigcup_{x \in A} \Phi_x(s_0)$ and $\psi \in \mathcal{E}$ such that $\rho(\psi,\varphi)<\delta$, it follows that $|h(\varphi)-h(\psi)|<\eta/2$.

    Let $N \in \mathbb{N}$ be such that $ N\eta/2 > s_0$. For $x \in A$, define the subsets of $\mathcal{E}$,
    \begin{align*}
      &E_0^x = \left\{\varphi \in \mathcal{E}: \dist(\varphi, \Phi_x(\eta/2))<\delta \right\}\\
      &E^x_k = \left\{\varphi \in \mathcal{E}: \dist(\varphi, \Phi_x(k\eta/2))\geq \delta, \ \dist(\varphi, \Phi_x((k+1)\eta/2))<\delta \right\}, \\ & \text{ for } k=1,...,N-1\\
      &E^x_N = \left\{\varphi \in \mathcal{E}: \dist(\varphi, \Phi_x(N\eta/2))\geq \delta \right\}.
    \end{align*}
    Note that $\bigcup_{k=0}^N E^x_k = \mathcal{E}$.
    For any $x \in A$ and $\e>0$,
    \begin{align*}
      &\E \exp \left( -\frac{h(X^\e_x)}{a(\e)}\right) \leq \sum_{k=0}^N \E \left(\exp\left( - \frac{h(X^\e_x)}{a(\e)} \right) \mathbbm{1}_{\{X^\e_x \in E_k^x\}} \right)\\
      &\leq \sum_{k=0}^N \exp\left(-\frac{\inf_{\varphi \in E_k^x} h(\varphi)}{a(\e)} \right) \Pro(X^\e_x \in E_k^x).
    \end{align*}
    It follows that
    \begin{align*}
      & a(\e) \log \E \exp \left( -\frac{h(X^\e_x)}{a(\e)}\right)\nonumber\\
      &\leq a(\e) \log \left(\sum_{k=0}^N \exp\left(-\frac{\inf_{\varphi \in E_k^x} h(\varphi)}{a(\e)} \right) \Pro(X^\e_x \in E_k^x) \right)\nonumber\\
      & \leq a(\e) \log(N+1) + \max_{k \in \{0,...,N\}} \left\{-\inf_{\varphi \in E_k^x} h(\varphi) + a(\e) \log \Pro(X^\e_x \in E_k^x) \right\}.
    \end{align*}
    By adding and subtracting $\frac{k\eta}{2}$,
    \begin{align}\label{eq:log-estimate}
      & a(\e) \log \E \exp \left( -\frac{h(X^\e_x)}{a(\e)}\right)\nonumber\\
      & \leq a(\e) \log(N+1) + \max_{k \in \{0,...,N\}} \left\{  a(\e) \log \Pro(X^\e_x \in E_k^x) + k\eta/2 \right\}\nonumber\\
      &\qquad+ \max_{k \in \{0,...,N\}} \left\{-\inf_{\varphi \in E_k^x} h(\varphi) - k\eta/2 \right\}\nonumber\\
      &\leq a(\e) \log(N+1) + \max_{k \in \{0,...,N\}} \left\{  a(\e) \log \Pro(X^\e_x \in E_k^x) + k\eta/2 \right\}\nonumber\\
       &\qquad- \min_{k \in \{0,...,N\}} \left\{\inf_{\varphi \in E_k^x} h(\varphi) + k\eta/2 \right\}.
    \end{align}
    By the definition of $E_k^x$ for $k\in\{1,..,N\}$,
    $$\Pro(X^\e_x \in E_k^x) \leq \Pro(\dist(X^\e_x,\Phi_x(k\eta/2))\geq\delta)$$
     and it follows by the FWULDP upper bound \eqref{eq:fwuldp-upper} that
    \begin{align*}
      &\limsup_{\e \to 0} \sup_{x \in A} \max_{k \in \{1,...,N\}} \{a(\e) \log \Pro(X^\e_x \in E_k^x) + k\eta/2\} \\
      &\leq \limsup_{\e \to 0} \sup_{x \in A} \max_{k \in \{1,...,N\}} \{a(\e) \log \Pro(\dist(X^\e_x,\Phi_x(k\eta/2))\geq \delta) + k\eta/2\} \\
      &\leq 0.
    \end{align*}
    The $k=0$ case is trivially true because $ \Pro(X^\e_x \in E^x_0) \leq 1$ so it follows that
    \begin{equation} \label{eq:Ek-estimate}
      \lim_{\e \to 0} \sup_{x \in A} \max_{k \in \{0,...,N\}} \{a(\e) \log \Pro(X^\e_x \in E_k^x) + k\eta/2\} \leq 0.
    \end{equation}
    To prove \eqref{eq:ulp-upper} we show that for any $x \in A$,
    \begin{equation} \label{eq:inf-leq-inf}
      \inf_{\varphi \in \mathcal{E}} \{h(\varphi) + I_x(\varphi)\} \leq \min_{k \in \{0,..,N\}} \left\{\inf_{\varphi \in E_k^x} h(\varphi) + k\eta/2\right\} + \eta.
    \end{equation}
    Fix $k \in \{0,...,N-1\}$. Let $\varphi \in E_k^x$ be arbitrary. By the definition of $E_k^x$, there exists $\tilde\varphi \in \Phi_x((k+1)\eta/2)$ such that $\rho(\tilde{\varphi}, \varphi)< \delta$.  At the beginning of the proof we chose $\delta$ in a way that guarantees that $h(\tilde{\varphi})  \leq h(\varphi) + \eta/2$. Note that $I_x(\tilde\varphi) \leq k\eta/2 + \eta/2$. Therefore,
    \begin{equation*}
      (h(\varphi) + k\eta/2) + \eta \geq h(\tilde \varphi) + I_x(\tilde\varphi) \geq \inf_{\phi \in \mathcal{E}} \{h(\phi) + I_x(\phi)\}.
    \end{equation*}
    For fixed $x \in A$ and $k \in \{0,...,N-1\}$, take the infimum over $\varphi \in E_k^x$,
    \begin{equation*}
      \inf_{\varphi \in E_k^x} h(\varphi) + k\eta/2 + \eta \geq \inf_{\varphi \in \mathcal{E}} \{h(\varphi) + I_x(\varphi)\}.
    \end{equation*}
    The above inequality also holds for $k=N$ because of Lemma \ref{lem:inf-bound} and our choice of $N$. Therefore, \eqref{eq:inf-leq-inf} holds.

    By \eqref{eq:log-estimate}, \eqref{eq:Ek-estimate}, and \eqref{eq:inf-leq-inf},
    \begin{align*}
      &\limsup_{\e \to 0} \sup_{x \in A} \left(a(\e)\log \E \exp \left(- \frac{h(X^\e_x)}{a(\e)} \right) + \inf_{\varphi \in \mathcal{E}} \{h(\varphi) + I_x(\varphi)\} \right)\\
      &\leq \limsup_{\e \to 0} \left( a(\e)\log(N+1) +  \sup_{x \in A} \max_{k \in \{0,...,N\}} \{a(\e) \log \Pro(X^\e_x \in E_k^x) + k\eta/2\}
      \right)\\
      &\qquad + \sup_{x \in A} \left(\inf_{\varphi\in \mathcal{E}} \{h(\varphi) + I_x(\varphi)\} -  \min_{k \in \{0,...,N\}}\{\inf_{\varphi \in E_k^x} h(\varphi) + k\eta/2\} \right)\\
      & \leq \eta.
    \end{align*}
    Then \eqref{eq:ulp-upper} follows because $\eta>0$ was arbitrary.
  \end{proof}

  Theorem \ref{thm:FWULDP=ULP} follows from Lemmas \ref{lem:ulp->fw-low}, \ref{lem:ulp->fw-up}, \ref{lem:fw->ulp-low}, and \ref{lem:fw->ulp-up}.
\section{Equivalence of the FWULDP and the DZULDP -- Proof of Theorem \ref{thm:FWULDP=DZULDP}} \label{S:Proof-FWULDP=DZULDP}
In this section, we assume that Assumption \ref{assum:compact-initial-conds} holds.

\begin{lemma} \label{lem:fw->dz-low}
  Under Assumption \ref{assum:compact-initial-conds}, the FWULDP implies the DZULDP lower bound \eqref{eq:dzuldp-lower}.
\end{lemma}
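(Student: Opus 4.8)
**Proof proposal for Lemma \ref{lem:fw->dz-low} (FWULDP $\Rightarrow$ DZULDP lower bound under Assumption \ref{assum:compact-initial-conds}).**

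The plan is to fix a compact set $A \in \mathscr{A}$ and an open set $G \subset \mathcal{E}$, and to show that if $\sup_{x \in A} I_x(G) =: s_1 < +\infty$ (the case $s_1 = +\infty$ being vacuous), then $\liminf_{\e \to 0} \inf_{x \in A} a(\e) \log \Pro(X^\e_x \in G) \geq -s_1$. Argue by contradiction along sequences: suppose there are $\e_n \downarrow 0$ and $x_n \in A$ with $\limsup_n a(\e_n) \log \Pro(X^{\e_n}_{x_n} \in G) < -s_1 - 3\gamma$ for some $\gamma > 0$. Since $A$ is compact we may pass to a subsequence with $x_n \to x_\ast \in A$. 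The goal is to produce, for large $n$, a function $\varphi_n \in \Phi_{x_n}(s_1 + \gamma)$ lying at distance at least some fixed $\delta > 0$ inside $G$ (i.e. the open ball of radius $\delta$ around $\varphi_n$ is contained in $G$), and then invoke the FWULDP lower bound \eqref{eq:fwuldp-lower} with $s_0 = s_1 + \gamma$ and that $\delta$ to get $\liminf_n (a(\e_n)\log \Pro(\rho(X^{\e_n}_{x_n}, \varphi_n) < \delta) + I_{x_n}(\varphi_n)) \geq 0$, hence $\liminf_n a(\e_n)\log \Pro(X^{\e_n}_{x_n} \in G) \geq -(s_1 + \gamma)$, contradicting the choice of $\gamma$.

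The construction of the $\varphi_n$ is where Assumption \ref{assum:compact-initial-conds}(d) (Hausdorff continuity of level sets) does the work. First, at the limit point $x_\ast$, since $I_{x_\ast}(G) \leq \sup_{x \in A} I_x(G) = s_1$, the open set $G$ meets $\Phi_{x_\ast}(s_1 + \gamma/2)$; pick $\varphi_\ast \in G \cap \Phi_{x_\ast}(s_1 + \gamma/2)$ and choose $\delta > 0$ with $\{\psi : \rho(\psi, \varphi_\ast) < 2\delta\} \subset G$. Now use $\lambda(\Phi_{x_n}(s_1 + \gamma), \Phi_{x_\ast}(s_1 + \gamma)) \to 0$: for large $n$ the Hausdorff distance is below $\delta$, so (since $\varphi_\ast \in \Phi_{x_\ast}(s_1 + \gamma/2) \subset \Phi_{x_\ast}(s_1 + \gamma)$) there exists $\varphi_n \in \Phi_{x_n}(s_1 + \gamma)$ with $\rho(\varphi_n, \varphi_\ast) < \delta$. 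Then $\{\psi : \rho(\psi, \varphi_n) < \delta\} \subset \{\psi : \rho(\psi, \varphi_\ast) < 2\delta\} \subset G$, so $\Pro(\rho(X^{\e_n}_{x_n}, \varphi_n) < \delta) \leq \Pro(X^{\e_n}_{x_n} \in G)$, and $I_{x_n}(\varphi_n) \leq s_1 + \gamma$. Plugging into \eqref{eq:fwuldp-lower} with $s_0 = s_1 + \gamma$ gives the desired lower bound and the contradiction.

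The main obstacle is making sure the single radius $\delta$ and the single level $s_0 = s_1 + \gamma$ work \emph{uniformly} in $n$: this is exactly what the Hausdorff continuity buys us — it transfers the "interior slack" of $\varphi_\ast$ inside $G$ to nearby witnesses $\varphi_n$ in the perturbed level sets without shrinking $\delta$. A secondary point to handle carefully is the reduction to sequences: because \eqref{eq:dzuldp-lower} asserts a $\liminf$ of an $\inf$ over $x \in A$, it suffices to check that every sequence $(x_n, \e_n)$ with $\e_n \downarrow 0$ satisfies $\liminf_n a(\e_n)\log\Pro(X^{\e_n}_{x_n} \in G) \geq -\sup_{x\in A} I_x(G)$, and the contradiction hypothesis lets us extract the convergent subsequence $x_n \to x_\ast$; one should also note that the good-rate-function hypothesis (c) guarantees the $\Phi_x(s)$ are genuinely compact (nonempty closed sets on which the Hausdorff metric is well-defined) so part (d) is meaningful. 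Everything else is bookkeeping with the definition of $\dist$ and open balls.
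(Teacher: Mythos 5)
Your proposal is correct and follows essentially the same route as the paper's proof: reduce to a convergent subsequence $x_n \to x_\ast$ via compactness of $A$, pick a near-optimal $\varphi_\ast \in G$ for the limit point with an open ball inside $G$, use the Hausdorff continuity of the level sets to produce witnesses $\varphi_n \in \Phi_{x_n}(s_1+\gamma)$ close to $\varphi_\ast$ with a uniform radius, and apply the FWULDP lower bound. The only difference is that you wrap the argument in a proof by contradiction while the paper argues directly along arbitrary sequences; the substance is identical.
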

\begin{proof}
  Assume that $X^\e_x$ satisfies a FWULDP with respect to rate function $I_x$ with speed $a(\e)$ uniformly over $\mathscr{A}$ where $\mathscr{A}$ satisfies Assumption \ref{assum:compact-initial-conds}.
  Let $A \in \mathscr{A}$ and let $G \subset \mathcal{E}$ be open. If $\sup_{x \in A} I_x(G) = +\infty$, then \eqref{eq:dzuldp-lower} is trivially true. Assume that $s_0:= \sup_{x \in A} I_x(G) <+\infty$.

  Let $x_n \in A$ and $\e_n\downarrow 0$ be arbitrary sequences. Let $\eta>0$. Because Assumption \ref{assum:compact-initial-conds} says that $A \subset \mathcal{E}_0$ is a compact set, there exists a subsequence (relabeled $(x_n,\e_n)$) and a limit $\tilde x \in A$ such that $x_n \to \tilde x$ in $\mathcal{E}_0$.

  Because of the definition of $s_0$, there must exist $\tilde\varphi \in G$ such that $I_{\tilde x}(\tilde \varphi) \leq I_{\tilde x}(G) + \eta \leq s_0 + \eta.$ Because $G$ is open, there exists $\delta>0$ such that $\{\varphi \in \mathcal{E}: \rho(\varphi,\tilde \varphi)<\delta\} \subset G$.

  By Assumption \ref{assum:compact-initial-conds}, the sets $\Phi_{x_n}(s_0 + \eta)$ converge to $\Phi_{\tilde x}(s_0 + \eta)$ in Hausdorff metric. In particular, there must exist a sequence $\{\varphi_n\} \subset \mathcal{E}$ such that $\varphi_n \in \Phi_{x_n}(s_0 + \eta)$ and $\varphi_n \to \tilde \varphi$. There must exist an $N>0$ such that for $n\geq N$, $\rho(\varphi_n,\varphi)<\delta/2$. In particular, for $n \geq N$,
  \[\{\varphi \in \mathcal{E}:\rho(\varphi,\varphi_n)<\delta/2\} \subset \{\varphi \in \mathcal{E}: \rho(\varphi,\tilde\varphi)<\delta\} \subset G.\]
   Therefore,
  \[\Pro(X^{\e_n}_{x_n} \in G) \geq \Pro(\rho(X^{\e_n}_{x_n} , \varphi_n)< \delta/2).\]
  By the FWULDP lower bound \eqref{eq:fwuldp-lower},
  \begin{align*}
    &\liminf_{n \to +\infty} \left(a(\e_n) \log \Pro(X^{\e_n}_{x_n} \in G) + I_{x_n}(\varphi_n) \right) \\
    &\geq \liminf_{n \to +\infty} \left(a(\e_n) \log \Pro(\rho(X^\e_{x_n}, \varphi_n)<\delta/2) + I_{x_n}(\varphi_n) \right)\geq 0.
  \end{align*}
  The $\varphi_n$ were chosen so that $I_{x_n}(\varphi_n) \leq s_0 + \eta$, so we can conclude that
  \[\liminf_{n \to +\infty} a(\e_n) \log \Pro(X^{\e_n}_{x_n} \in G) \geq -s_0 -\eta.\]
  \eqref{eq:dzuldp-lower} follows because the sequence $(x_n,\e_n)$ and $\eta>0$ were arbitrary.
\end{proof}

\begin{lemma} \label{lem:fw->dz-up}
  Under Assumption \ref{assum:compact-initial-conds}, the FWULDP implies the DZULDP upper bound \eqref{eq:dzuldp-upper}.
\end{lemma}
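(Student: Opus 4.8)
The plan is to deduce the closed-set estimate \eqref{eq:dzuldp-upper} from the FWULDP upper bound \eqref{eq:fwuldp-upper} by comparing the event $\{X^\e_x\in F\}$ with an event of the form $\{\dist(X^\e_x,\Phi_x(s))\ge\delta\}$, where the key point is that a single radius $\delta$ can be used for all $x$ in the compact set $A$. Fix a compact $A\in\mathscr{A}$ and a closed $F\subset\mathcal{E}$ and set $s_0:=\inf_{x\in A}I_x(F)$. Since $a(\e)\log\Pro(X^\e_x\in F)\le 0$ always, \eqref{eq:dzuldp-upper} is immediate when $s_0=0$, so I would assume $s_0>0$ (possibly $s_0=+\infty$) and show that for every $s\in(0,s_0)$ one has $\limsup_{\e\to0}\sup_{x\in A}a(\e)\log\Pro(X^\e_x\in F)\le -s$; letting $s\uparrow s_0$ then gives the claim.

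The crucial intermediate step, and the one I expect to be the main obstacle, is a \emph{uniform separation} statement: for each $s\in(0,s_0)$ there is $\delta>0$ with $\dist(F,\Phi_x(s))\ge\delta$ for all $x\in A$ (using the convention $\dist(F,\emptyset)=+\infty$). Pointwise this is easy: since $I_x(F)\ge s_0>s$ we have $F\cap\Phi_x(s)=\emptyset$, and $\Phi_x(s)$ is compact because $I_x$ is a good rate function (Assumption \ref{assum:compact-initial-conds}(c)), so the closed set $F$ and the disjoint compact set $\Phi_x(s)$ are at positive distance. To make $\delta$ uniform in $x$ I would argue by contradiction: if $x_n\in A$ satisfy $\dist(F,\Phi_{x_n}(s))\to0$, then in particular each $\Phi_{x_n}(s)$ is nonempty; passing to a subsequence with $x_n\to\tilde x\in A$ (compactness of $A$, Assumption \ref{assum:compact-initial-conds}(b)) and using the Hausdorff continuity of the level sets (Assumption \ref{assum:compact-initial-conds}(d)) gives $\lambda(\Phi_{x_n}(s),\Phi_{\tilde x}(s))\to0$, which forces $\Phi_{\tilde x}(s)$ to be nonempty as well. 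Choosing $\varphi_n\in F$ and $\psi_n\in\Phi_{x_n}(s)$ with $\rho(\varphi_n,\psi_n)\to0$, and then $\tilde\psi_n\in\Phi_{\tilde x}(s)$ with $\rho(\psi_n,\tilde\psi_n)\le\lambda(\Phi_{x_n}(s),\Phi_{\tilde x}(s))\to0$, compactness of $\Phi_{\tilde x}(s)$ lets me extract $\tilde\psi_n\to\tilde\psi\in\Phi_{\tilde x}(s)$ along a further subsequence, whence $\varphi_n\to\tilde\psi$ too; since $F$ is closed, $\tilde\psi\in F\cap\Phi_{\tilde x}(s)$, contradicting $I_{\tilde x}(F)>s$. (Remark \ref{rem:counterexample-no-Hausdorff-cont} shows the Hausdorff-continuity hypothesis is genuinely needed at this point, even with $A$ compact.)

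Once the separation claim is established the rest is routine. For every $x\in A$ and $\e>0$, if $X^\e_x\in F$ then $\dist(X^\e_x,\Phi_x(s))\ge\dist(F,\Phi_x(s))\ge\delta$, so $\Pro(X^\e_x\in F)\le\Pro(\dist(X^\e_x,\Phi_x(s))\ge\delta)$. Applying \eqref{eq:fwuldp-upper} with this radius $\delta$ and with the upper level parameter in \eqref{eq:fwuldp-upper} taken to be $s$ itself (so that the value $s$ is included in the inner supremum) yields $\limsup_{\e\to0}\sup_{x\in A}\bigl(a(\e)\log\Pro(\dist(X^\e_x,\Phi_x(s))\ge\delta)+s\bigr)\le0$, hence $\limsup_{\e\to0}\sup_{x\in A}a(\e)\log\Pro(X^\e_x\in F)\le-s$. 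Letting $s\uparrow s_0=\inf_{x\in A}I_x(F)$ gives \eqref{eq:dzuldp-upper}, which completes the proof.
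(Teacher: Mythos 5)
Your proof is correct and uses the same essential ingredients as the paper's: compactness of $A$, goodness of the rate functions, and Hausdorff continuity of the level sets to separate $F$ from $\Phi_x(s)$ by a positive distance, followed by the FWULDP upper bound. The only difference is organizational — you first establish a separation radius $\delta$ uniform over all of $A$ by a contradiction/subsequence argument, whereas the paper works directly along an arbitrary sequence $(x_n,\e_n)$ and obtains the separation only eventually along a subsequence; the two arguments are interchangeable.
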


\begin{proof}
  Assume that $X^\e_x$ satisfies a FWULDP with respect to rate function $I_x$ with speed $a(\e)$ uniformly over $\mathscr{A}$ where $\mathscr{A}$ satisfies Assumption \ref{assum:compact-initial-conds}.
  Let $F \subset \mathcal{E}$ be closed and $A \in \mathscr{A}$. If $\inf_{x \in A} I_x(F) = 0$, then the lemma is trivially true. Assume $\inf_{x \in A} I_x(F) >0$ and let $0<s< \inf_{x \in A} I_x(F)$. Let $x_n \in A$ and $\e_n \downarrow 0$ be arbitrary.

  Because $A$ is compact by Assumption \ref{assum:compact-initial-conds}, there exists a subsequence (relabeled $(x_n, \e_n)$) and a limit $\tilde x \in A$ such that $x_n \to \tilde x$ in $\mathcal{E}_0$. Because $F$ is closed, $\Phi_{\tilde x}(s)$ is compact, and $F \cap \Phi_{\tilde x}(s)=\emptyset$, there must be some positive distance $\delta>0$ such that $F \subset \{\varphi \in \mathcal{E}: \dist(\varphi, \Phi_{\tilde x}(s))\geq \delta\}$.

  By Assumption \ref{assum:compact-initial-conds}, there exists $N \in \mathbb{N}$ such that for $n \geq N$, $$\lambda(\Phi_{x_n}(s), \Phi_{\tilde x}(s)) < \delta/2.$$ Therefore, for all $n \geq N$,
  $$F \subset \{\varphi \in \mathcal{E}: \dist(\varphi, \Phi_{\tilde x}(s))\geq \delta\} \subset \{\varphi \in \mathcal{E}: \dist(\varphi, \Phi_{x_n}(s))\geq \delta/2\}.$$
  By the FWULDP upper bound \eqref{eq:fwuldp-upper},
  \begin{align*}
  &\limsup_{n \to +\infty} a(\e_n) \log \Pro(X^{\e_n}_{x_n} \in F) \leq \limsup_{n \to +\infty} a(\e_n) \log \Pro(\dist(X^{\e_n}_{x_n},\Phi_{x_n}(s))\geq \delta/2) \\
  &\leq -s .
  \end{align*}
  The result follows because  the sequence $(x_n, \e_n)$ and $s< \inf_{x \in A} I_x(F)$ were arbitrary.
\end{proof}

\begin{lemma} \label{lem:dz->fw-low}
  Under Assumption \ref{assum:compact-initial-conds}, the DZULDP implies the FWULDP lower bound \eqref{eq:fwuldp-lower}.
\end{lemma}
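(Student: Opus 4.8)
The plan is to extract subsequences using the compactness of $A$ and the Hausdorff continuity of the level sets (Assumption~\ref{assum:compact-initial-conds}), collapse the moving neighbourhoods $\{\rho(X^\e_x,\varphi)<\delta\}$ onto a single fixed open ball, and then apply the DZULDP lower bound \eqref{eq:dzuldp-lower} on the compact index set formed by the subsequence together with its limit point. Fix $A \in \mathscr{A}$, $s_0>0$, $\delta>0$, and arbitrary sequences $x_n \in A$, $\varphi_n \in \Phi_{x_n}(s_0)$, $\e_n \downarrow 0$. It suffices to produce, from any subsequence, a further subsequence along which $\liminf_n\bigl(a(\e_n)\log\Pro(\rho(X^{\e_n}_{x_n},\varphi_n)<\delta)+I_{x_n}(\varphi_n)\bigr)\ge 0$, so I pass to subsequences freely. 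Since $A$ is compact (Assumption~\ref{assum:compact-initial-conds}(b)) I pass to a subsequence with $x_n \to \tilde x \in A$, and since $I_{x_n}(\varphi_n)\le s_0$ I pass to a further subsequence with $I_{x_n}(\varphi_n)\to \ell \in[0,s_0]$. By Assumption~\ref{assum:compact-initial-conds}(d), $\lambda(\Phi_{x_n}(s_0),\Phi_{\tilde x}(s_0))\to 0$; choosing near-minimizers $\psi_n \in \Phi_{\tilde x}(s_0)$ with $\rho(\varphi_n,\psi_n)\le \lambda(\Phi_{x_n}(s_0),\Phi_{\tilde x}(s_0))+1/n$ and using that the good level set $\Phi_{\tilde x}(s_0)$ is compact, I pass to a further subsequence with $\psi_n \to \tilde\varphi \in \Phi_{\tilde x}(s_0)$, whence also $\varphi_n \to \tilde\varphi$.

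The crucial refinement is the sharp estimate $I_{\tilde x}(\tilde\varphi)\le \ell$. For fixed $\eta>0$ and all large $n$ one has $\varphi_n \in \Phi_{x_n}(\ell+\eta)$, and Assumption~\ref{assum:compact-initial-conds}(d) gives $\dist(\varphi_n,\Phi_{\tilde x}(\ell+\eta))\le \lambda(\Phi_{x_n}(\ell+\eta),\Phi_{\tilde x}(\ell+\eta))\to 0$. Since $\varphi_n \to \tilde\varphi$ and $\Phi_{\tilde x}(\ell+\eta)$ is closed, $\tilde\varphi \in \Phi_{\tilde x}(\ell+\eta)$, so $I_{\tilde x}(\tilde\varphi)\le \ell+\eta$; letting $\eta\downarrow 0$ proves the claim.

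Now fix $\eta>0$, set $G=\{\varphi\in\mathcal{E}:\rho(\varphi,\tilde\varphi)<\delta/2\}$, and pick $N$ so large that $\rho(\varphi_n,\tilde\varphi)<\delta/2$ and $I_{x_n}(\varphi_n)<\ell+\eta$ for all $n\ge N$. Then $\tilde\varphi\in G$ and $\varphi_n\in G$ for $n\ge N$, so $I_{\tilde x}(G)\le I_{\tilde x}(\tilde\varphi)\le\ell$ and $I_{x_n}(G)\le I_{x_n}(\varphi_n)<\ell+\eta$; moreover $G\subset\{\varphi:\rho(\varphi,\varphi_n)<\delta\}$ for $n\ge N$ by the triangle inequality, so $\Pro(\rho(X^{\e_n}_{x_n},\varphi_n)<\delta)\ge\Pro(X^{\e_n}_{x_n}\in G)$. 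The set $A':=\{x_n:n\ge N\}\cup\{\tilde x\}$ is a compact subset of $\mathcal{E}_0$, hence $A'\in\mathscr{A}$, with $\sup_{x\in A'}I_x(G)\le\ell+\eta$. Applying \eqref{eq:dzuldp-lower} to $A'$ and $G$ and restricting to the sequence $(x_n,\e_n)_{n\ge N}\subset A'\times(0,\infty)$,
\begin{equation*}
\liminf_{n\to\infty}a(\e_n)\log\Pro(\rho(X^{\e_n}_{x_n},\varphi_n)<\delta)\ \ge\ \liminf_{n\to\infty}a(\e_n)\log\Pro(X^{\e_n}_{x_n}\in G)\ \ge\ -(\ell+\eta).
\end{equation*}
Since $I_{x_n}(\varphi_n)\to\ell$, this yields $\liminf_n\bigl(a(\e_n)\log\Pro(\rho(X^{\e_n}_{x_n},\varphi_n)<\delta)+I_{x_n}(\varphi_n)\bigr)\ge-\eta$ along the chosen subsequence; as $\eta>0$ was arbitrary this liminf is $\ge 0$, which (by the subsequence reduction above) gives \eqref{eq:fwuldp-lower}.

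The main obstacle is the second paragraph: without Hausdorff continuity of $x\mapsto\Phi_x(\cdot)$ at the level $\ell$ one only gets $I_{\tilde x}(\tilde\varphi)\le s_0$, and $-(s_0+\eta)$ is too weak a lower bound to close the argument. Thus Assumption~\ref{assum:compact-initial-conds}(d) together with the goodness of the rate functions is genuinely used here — in line with the counterexample of Remark~\ref{rem:counterexample-no-Hausdorff-cont}. The packaging of the subsequence with its limit as the compact test set $A'$ is what lets the DZULDP's $\sup_{x\in A'}I_x(G)$ be controlled by $\ell+\eta$ rather than by $s_0$.
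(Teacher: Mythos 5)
Your proof is correct and follows essentially the same route as the paper's: extract subsequences via compactness of $A$ and $[0,s_0]$, use Hausdorff continuity and goodness of $\Phi_{\tilde x}(\cdot)$ to get $\varphi_n\to\tilde\varphi$, reduce to a fixed ball $G$ of radius $\delta/2$, and apply the DZULDP lower bound on the compact tail set $\{x_n:n\ge N\}\cup\{\tilde x\}$. The only (harmless) difference is that you establish the sharper bound $I_{\tilde x}(\tilde\varphi)\le\ell$, whereas the paper only needs and only proves $\tilde\varphi\in\Phi_{\tilde x}(s+\eta)$.
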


\begin{proof}
  Assume that $X^\e_x$ satisfies a DZULDP with respect to rate function $I_x$ with speed $a(\e)$ uniformly over $\mathscr{A}$ where $\mathscr{A}$ satisfies Assumption \ref{assum:compact-initial-conds}.
  Fix $A \in \mathscr{A}$, $\delta>0$, and $s_0>0$. Let $x_n \in A$, $\varphi_n \in \Phi_{x_n}(s_0)$, $\e_n \downarrow 0$, and $\eta>0$ be arbitrary. By the compactness of $A$ and $[0,s_0]$, there exists a subsequence (relabeled $(x_n, \varphi_n, \e_n)$) and a limits $ \tilde x \in A$ and $s \in [0,s_0]$ such that $x_n \to \tilde x$  and $I_{x_n}(\varphi_n) \to s$. We choose this subsequence in such a way that $I_{x_n}(\varphi_n) \leq s + \eta$ for all $n$.

  By Assumption \ref{assum:compact-initial-conds}, $\lambda(\Phi_{x_n}(s + \eta), \Phi_{\tilde x}(s + \eta)) \to 0$. In particular, there must exist a sequence $\tilde \varphi_n \in \Phi_{\tilde x}(s + \eta)$ such that $\rho(\tilde \varphi_n, \varphi_n) \to 0$. By the compactness of $\Phi_{\tilde x}(s+\eta)$, there is a subsequence (relabeled $(x_n, \varphi_n, \e_n,\tilde \varphi_n)$) and a limit $\tilde \varphi \in \Phi_{\tilde x} (s + \eta)$ such that $\tilde{\varphi}_n \to \tilde \varphi$. It follows that $\varphi_n \to \tilde \varphi$ also.

  Define the open set $G = \{\varphi \in \mathcal{E} : \rho(\varphi, \tilde \varphi)< \delta/2\}$. Because $\varphi_n \to \tilde\varphi$, there exists $N \geq 0$ such that for $n\geq N$, $\rho(\varphi_n, \tilde \varphi)<\delta/2$. Therefore, $G \subset \{\varphi \in \mathcal{E}: \rho(\varphi, \varphi_n) < \delta\}$ and
  \begin{equation} \label{eq:fwdzbound}
    \Pro(\rho(X^{\e_n}_{x_n}, \varphi_n)<\delta) \geq \Pro(X^{\e_n}_{x_n} \in G).
  \end{equation}
  Also note that $\tilde \varphi \in G$ and for each $n \geq N$, $\varphi_n \in G$. Therefore,
  \begin{equation} \label{eq:IG-bound}
    I_{\tilde x} (\tilde \varphi) \geq I_{\tilde x}(G) \text{ and } I_{x_n}(\varphi_n) \geq I_{x_n}(G).
  \end{equation}

  Next, because of \eqref{eq:fwdzbound} and the fact that $I_{x_n}(\varphi_n) \to s$,
  \begin{align*}
    &\liminf_{n \to +\infty} \left( a(\e_n) \log \Pro(\rho(X^{\e_n}_{x_n}, \varphi_n)<\delta) + I_{x_n}(\varphi_n) \right)\\
    &\geq \liminf_{n \to +\infty} a(\e_n) \log \Pro(X^{\e_n}_{x_n} \in G) + s.
  \end{align*}
   Let $A_N  =\bigcup_{n=N}^\infty\{x_n\}\cup \{\tilde x\}$. $A_N$ is a compact subset of $\mathcal{E}_0$.
  Therefore, by the DZULDP lower bound\eqref{eq:dzuldp-lower},
  \[\liminf_{n \to +\infty} \left(a(\e_n) \log \Pro(X^{\e_n}_{x_n}, \varphi_n) + I_{x_n}(\varphi_n) \right)
    \geq -\sup_{y \in A_N} I_{y}(G) + s .\]
  By \eqref{eq:IG-bound} and the fact that $I_{x_n}(\varphi_n) \leq s + \eta$ and $I_{\tilde x}(\tilde \varphi) \leq s + \eta$, it follows that $\sup_{y \in A_N} I_y(G) \leq s+ \eta$ and therefore,
  \[\liminf_{n \to +\infty} \left(a(\e_n) \log \Pro(X^{\e_n}_{x_n}, \varphi_n) + I_{x_n}(\varphi_n) \right) \geq -\eta.\]
  The FWULDP lower bound \eqref{eq:fwuldp-lower} follows because the sequences $(x_n,\e_n,\varphi_n)$ and $\eta>0$ were arbitrary.
\end{proof}

\begin{lemma} \label{lem:dz->fw-up}
  Under Assumption \ref{assum:compact-initial-conds}, the DZULDP implies the FWULDP upper bound \eqref{eq:fwuldp-upper}.
\end{lemma}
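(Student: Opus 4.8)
The plan is to mirror the proof of Lemma~\ref{lem:dz->fw-low}, but for the upper bound. Fix $A \in \mathscr{A}$, $\delta>0$ and $s_0>0$, and let $L$ denote the left-hand side of \eqref{eq:fwuldp-upper}; since $a(\e)\log\Pro(\cdot)\le 0$ we automatically have $L \le s_0 < \infty$, and it suffices to show $L \le 0$. First I would pick sequences $\e_n \downarrow 0$, $x_n \in A$ and $s_n \in [0,s_0]$ realizing the $\limsup$, so that $a(\e_n)\log\Pro(\dist(X^{\e_n}_{x_n},\Phi_{x_n}(s_n))\ge\delta)+s_n \to L$, and then, using the compactness of $A$ (Assumption~\ref{assum:compact-initial-conds}(b)) and of $[0,s_0]$, pass to a subsequence (not relabeled) along which $x_n \to \tilde x \in A$ and $s_n \to s \in [0,s_0]$. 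The case $s=0$ is immediate, so I would assume $s>0$ and fix $\eta \in (0,s)$.

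The one genuinely new difficulty relative to Lemma~\ref{lem:dz->fw-low} is that Assumption~\ref{assum:compact-initial-conds}(d) provides Hausdorff continuity of $x \mapsto \Phi_x(r)$ only for each \emph{fixed} level $r$, whereas here the levels $s_n$ also vary. I would get around this with the monotonicity of the level sets in the level: since $s_n \to s$, for all large $n$ we have $s_n > s-\eta$ and hence $\Phi_{x_n}(s_n) \supseteq \Phi_{x_n}(s-\eta)$, so the moving level can be replaced by the fixed level $s-\eta$ at the cost of $\eta$. With that done, set $F := \{\varphi \in \mathcal{E} : \dist(\varphi,\Phi_{\tilde x}(s-\eta)) \ge \delta/2\}$, which is closed. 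Because $\lambda(\Phi_{x_n}(s-\eta),\Phi_{\tilde x}(s-\eta)) \to 0$ by Assumption~\ref{assum:compact-initial-conds}(d), for all large $n$ every point of $\Phi_{\tilde x}(s-\eta)$ lies within $\delta/2$ of $\Phi_{x_n}(s-\eta) \subseteq \Phi_{x_n}(s_n)$, so a triangle-inequality estimate gives $\varphi \notin F \Rightarrow \dist(\varphi,\Phi_{x_n}(s_n)) < \delta$; equivalently $\{\dist(X^{\e_n}_{x_n},\Phi_{x_n}(s_n))\ge\delta\} \subseteq \{X^{\e_n}_{x_n}\in F\}$ for all large $n$.

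Now I would invoke the DZULDP upper bound \eqref{eq:dzuldp-upper} on the set $A_N := \{x_n : n \ge N\}\cup\{\tilde x\}$, which for $N$ large is a compact subset of $A$ (a convergent sequence together with its limit) and hence lies in $\mathscr{A}$; this yields $\limsup_n a(\e_n)\log\Pro(X^{\e_n}_{x_n}\in F) \le -\inf_{y\in A_N} I_y(F)$. The remaining point is the lower bound $\inf_{y\in A_N} I_y(F) \ge s-\eta$: if $\varphi\in F$ and $I_y(\varphi) \le s-\eta$ for some $y \in A_N$, then $\varphi\in\Phi_y(s-\eta)$; for $y=\tilde x$ this forces $\dist(\varphi,\Phi_{\tilde x}(s-\eta))=0$, and for $y=x_n$ with $n\ge N$ it forces $\dist(\varphi,\Phi_{\tilde x}(s-\eta))<\delta/2$ (as $\varphi\in\Phi_{x_n}(s-\eta)$ and $\lambda(\Phi_{x_n}(s-\eta),\Phi_{\tilde x}(s-\eta))<\delta/2$), either way contradicting $\varphi\in F$. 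Combining these bounds with $s_n\to s$ gives $L \le -(s-\eta)+s = \eta$, and letting $\eta\downarrow 0$ yields $L\le 0$, which is \eqref{eq:fwuldp-upper}. I expect the only real obstacle to be the mismatch between joint and level-wise Hausdorff continuity; once it is handled by the sandwich $\Phi_{x_n}(s_n)\supseteq\Phi_{x_n}(s-\eta)$, the rest is routine manipulation with the Hausdorff metric and triangle inequality, parallel to Lemmas~\ref{lem:fw->dz-up} and \ref{lem:dz->fw-low}. As in those lemmas I would tacitly assume the level sets $\Phi_x(r)$, $r\ge 0$, are nonempty so that the Hausdorff metric is defined; any degenerate case only inflates the probabilities in a trivially controllable way.
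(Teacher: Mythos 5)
Your proposal is correct and follows essentially the same route as the paper's proof: the same subsequence extraction $x_n\to\tilde x$, $s_n\to s$, the same replacement of the moving level by the fixed level $s-\eta$ via monotonicity, the same auxiliary closed set $F=\{\varphi:\dist(\varphi,\Phi_{\tilde x}(s-\eta))\ge\delta/2\}$, the same compact index set $A_N$, and the same Hausdorff-metric sandwich to get both the event inclusion and $\inf_{y\in A_N}I_y(F)\ge s-\eta$. The only (harmless) differences are cosmetic: you isolate the $s=0$ case and restrict $\eta<s$, where the paper leaves this degenerate case implicit.
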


\begin{proof}
  Assume that $X^\e_x$ satisfies a DZULDP with respect to rate function $I_x$ with speed $a(\e)$ uniformly over $\mathscr{A}$ where $\mathscr{A}$ satisfies Assumption \ref{assum:compact-initial-conds}.
  Fix $A \in \mathscr{A}$, $\delta>0$, and $s_0>0$. Fix $\eta>0$. Let $x_n\in  A$, $s_n \in [0,s_0]$ and $\e_n \downarrow 0$ be arbitrary.

  By the compactness of $A$ and $[0,s_0]$, there exist subsequences (relabeled $(x_n, s_n, \e_n)$) such that $x_n \to \tilde x \in A$ and $s_n \to s \in [0,s_0]$. We choose this subsequence in such a way that  for all $n$, it holds that $s_n> s-\eta$.

  Define the closed set $F = \{\varphi \in \mathcal{E}: \dist(\varphi, \Phi_{\tilde x}(s-\eta)) \geq \delta/2\}$.
  By Assumption \ref{assum:compact-initial-conds}, there exists $N \in \mathbb{N}$ such that for $n\geq N$, $\lambda(\Phi_{\tilde x}(s-\eta), \Phi_{x_n}(s-\eta))< \delta/4$.  Therefore, recalling that $s_n>s-\eta$, for $n\geq N$
  \begin{equation} \label{eq:F-subset}
    \{\varphi \in \mathcal{E}: \dist(\varphi, \Phi_{x_n}(s_n)) \geq \delta\} \subset \{\varphi \in \mathcal{E}: \dist(\varphi, \Phi_{x_n}(s-\eta))\geq \delta\} \subset F.
  \end{equation}
  Similarly for $n \geq N$,
  \begin{equation} \label{eq:F-supset}
    F \subset \{\varphi \in \mathcal{E}:\dist(\varphi, \Phi_{x_n}(s-\eta))\geq \delta/4\}
  \end{equation}
  Define the $\mathcal{E}_0$-compact set $A_N = \bigcup_{n=N}^\infty\{x_n\} \cup \{\tilde x\}$.
  It follows from the DZULDP upper bound \eqref{eq:dzuldp-upper} and \eqref{eq:F-subset} that
  \begin{align*}
    &\limsup_{n \to +\infty} \left( \e_n \log \Pro(\dist(X^{\e_n}_{x_n}, \Phi_{x_n}(s_n)) \geq \delta) + s_n \right)\\
  &\leq \limsup_{n \to +\infty} \e_n \log \Pro(X^{\e_n}_{x_n} \in F) + s\\
  & \leq -\inf_{y \in A_N} I_{y}(F) + s.
  \end{align*}
  By \eqref{eq:F-supset}, it follows that $F\cap \Phi_{x_n}(s-\eta)=\emptyset$ and $I_{x_n}(F) >s-\eta$. Similarly, $I_{\tilde x}(F) >s- \eta.$
  Therefore,
  \[\limsup_{n \to +\infty} \left( \e_n \log \Pro(\dist(X^{\e_n}_{x_n}, \Phi_{x_n}(s_n)) \geq \delta) + s_n \right)\leq \eta.\]
  Because  the sequences $(x_n,s_n,\e_n)$ and $\eta>0$ were arbitrary, the FWULDP upper bound \eqref{eq:fwuldp-upper} follows.
\end{proof}
Theorem \ref{thm:FWULDP=DZULDP} is a consequence of Lemmas \ref{lem:fw->dz-low}, \ref{lem:fw->dz-up}, \ref{lem:dz->fw-low}, and \ref{lem:dz->fw-up}.

\section{Equivalence of the FWULDP and EULP -- Proof of Theorem \ref{thm:FWULDP=EULP}} \label{S:Proof-FWULDP=EULP}

 \begin{lemma} \label{lem:eulp->fw-low}
   With no extra assumptions, the EULP implies the\\ FWULDP lower bound \eqref{eq:fwuldp-lower}.
 \end{lemma}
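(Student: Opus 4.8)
The plan is to mimic the proof of Lemma~\ref{lem:ulp->fw-low}, but to replace the appeal to Assumption~\ref{assum:compact-level-union} (which there produced a convergent subsequence of centers) by testing the EULP against a whole equicontinuous \emph{family} of test functions, one for every possible center. Fix $A \in \mathscr{A}$, $s_0>0$, $\delta>0$, and choose $j>s_0$. For each $\psi \in \mathcal{E}$ set
\[h_\psi(\varphi) := j\min\left\{\frac{\rho(\varphi,\psi)}{\delta},\,1\right\},\qquad \varphi\in\mathcal{E},\]
so that $0\le h_\psi\le j$, $h_\psi(\psi)=0$, and $h_\psi\equiv j$ on $\{\varphi\in\mathcal{E}:\rho(\varphi,\psi)\ge\delta\}$. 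The triangle inequality gives $|h_\psi(\varphi)-h_\psi(\varphi')|\le \frac{j}{\delta}\rho(\varphi,\varphi')$ uniformly in $\psi$, so $L:=\{h_\psi:\psi\in\mathcal{E}\}$ is an equibounded, equicontinuous subset of $C(\mathcal{E})$.

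Next I would apply the EULP to this family $L$: for every $\gamma>0$ there is $\e_0>0$ such that for all $\e<\e_0$, all $x\in A$, and all $\psi\in\mathcal{E}$,
\[a(\e)\log\E\exp\left(-\frac{h_\psi(X^\e_x)}{a(\e)}\right)+\inf_{\varphi\in\mathcal{E}}\{h_\psi(\varphi)+I_x(\varphi)\}\ge -\gamma.\]
Specializing to $\psi=\varphi$ for an arbitrary $\varphi\in\Phi_x(s_0)$, and splitting the expectation over $\{\rho(X^\e_x,\varphi)\ge\delta\}$ and its complement, one gets $\E\exp(-h_\varphi(X^\e_x)/a(\e))\le e^{-j/a(\e)}+\Pro(\rho(X^\e_x,\varphi)<\delta)$, while $\inf_{\varphi'\in\mathcal{E}}\{h_\varphi(\varphi')+I_x(\varphi')\}\le I_x(\varphi)\le s_0$. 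Together with the elementary bound $a(\e)\log(a+b)\le a(\e)\log2+\max\{a(\e)\log a,a(\e)\log b\}$ this yields
\[-\gamma\le a(\e)\log2+\max\{-j,\,a(\e)\log\Pro(\rho(X^\e_x,\varphi)<\delta)\}+I_x(\varphi).\]
Because $j>s_0\ge I_x(\varphi)$, once $j>s_0+1$ and $\gamma$ and $a(\e)\log 2$ are small, the maximum cannot be attained at $-j$ (that would force $j\le \gamma+a(\e)\log2+s_0<s_0+1$), so for all small $\e$, uniformly in $x\in A$ and $\varphi\in\Phi_x(s_0)$,
\[a(\e)\log\Pro(\rho(X^\e_x,\varphi)<\delta)+I_x(\varphi)\ge -\gamma-a(\e)\log2.\]
Taking $\inf_{x\in A}\inf_{\varphi\in\Phi_x(s_0)}$, then $\liminf_{\e\to0}$, gives a bound $\ge-\gamma$, and letting $\gamma\downarrow0$ proves \eqref{eq:fwuldp-lower}.

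The one point that needs care is the uniform equicontinuity of $L$ in the center $\psi$; this is precisely the Lipschitz bound supplied by the triangle inequality, and it is the whole reason the EULP — as opposed to the ULP — is strong enough to run this argument with no compactness of $\bigcup_{x\in A}\Phi_x(s_0)$. The remaining manipulations with $\log$, $\max$, and the speed $a(\e)$ are routine and identical in spirit to those in Lemma~\ref{lem:ulp->fw-low}.
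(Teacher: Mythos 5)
Your proposal is correct and follows essentially the same route as the paper's proof: the same Lipschitz test functions $j\min\{\rho(\cdot,\psi)/\delta,1\}$ forming an equibounded, equicontinuous family, the same splitting of the expectation, and the same use of $j>s_0$ to rule out the $-j$ branch of the maximum. The only cosmetic differences are that you index the family by all centers $\psi\in\mathcal{E}$ rather than by $\bigcup_{x\in A}\Phi_x(s_0)$ and phrase the limit via an explicit $\gamma$--$\e_0$ quantification instead of a $\liminf$; neither changes the argument.
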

 \begin{proof}
   Assume that $X^\e_x$ satisfies an EULP with respect to rate function $I_x$ with speed $a(\e)$ uniformly over $\mathscr{A}$.
   Fix $A \in \mathscr{A}$, $\delta>0$, and $s_0>0$. Fix $j > s_0$. For any $\varphi \in \mathcal{E}$, define the test functions $h_{j,\delta,\varphi}(\psi) = j \min\left\{\frac{\rho(\psi,\varphi)}{\delta} , 1 \right\}$.
   These functions are uniformly bounded by $j$ and they are equicontinuous (actually equi-Lipschitz-continuous with Lipschitz constant $\frac{j}{\delta}$). With  $j$ and $\delta$ fixed, define the equibounded equicontinuous family of test functions $L:= \{h_{j,\delta,\varphi}: \varphi \in \bigcup_{x \in A} \Phi_x(s_0)\}$.

   Note that these functions have the properties that $h_{j,\delta,\varphi} \geq 0$ and \\$h_{j,\delta,\varphi}(\psi) = j$ if $\rho(\psi,\varphi)>\delta$. Therefore,
   $$\E \exp \left(- \frac{h_{j,\delta,\varphi}(X^\e_x)}{a(\e)} \right)\leq e^ {-\frac{j}{a(\e)} }+ \Pro(\rho(X^\e_x, \varphi)<\delta)$$ and
   \begin{align} \label{eq:exp-upper-bound-eulp}
     &a(\e) \log \E \exp \left(- \frac{h_{j,\delta,\varphi}(X^\e_x)}{a(\e)} \right) \nonumber \\
      &\leq a(\e) \log(2) + \max\left\{-j, a(\e) \log\Pro(\rho(X^\e_x, \varphi)<\delta)\right\}.
   \end{align}
   Furthermore, because $h_{j,\delta,\varphi}(\varphi) = 0$,
   \begin{equation} \label{eq:h-def-fw-up-eulp}
      \inf_{\phi \in \mathcal{E}}\{h_{j,\delta,\varphi}(\phi)  + I_x(\phi)\} \leq I_x(\varphi).
   \end{equation}

   Therefore, by \eqref{eq:exp-upper-bound-eulp}, \eqref{eq:h-def-fw-up-eulp}, and the EULP \eqref{eq:eulp},
   \begin{align*}
     &\liminf_{\e \to 0} \inf_{x \in A} \inf_{\varphi \in \Phi_x(s_0)} \left(\max\left\{-j, a(\e) \log\Pro(\rho(X^\e_x, \varphi)<\delta)   \right\}+ I_x(\varphi) \right)\\
     &\geq \liminf_{\e \to 0} \inf_{x \in A} \inf_{\varphi \in \Phi_x(s_0)} \Bigg( a(\e) \log \E \exp\left( -\frac{h_{j,\delta,\varphi}(X^\e_x)}{a(\e)}\right) \\
     &\hspace{4.5cm}+ \inf_{\phi \in \mathcal{E}}\{h_{j,\delta,\varphi}(\phi) + I_x(\phi)\}  \Bigg)\\
     &\geq \liminf_{\e \to 0} \inf_{x \in A} \inf_{h \in L} \left(a(\e) \log \E \exp\left(- \frac{h(X^\e_x)}{a(\e)} \right) + \inf_{\phi \in \mathcal{E}} \{h(\phi) + I_x(\phi)\}  \right)\\
     &\geq 0.
   \end{align*}
   Because we chose $j>s_0$, whenever $\varphi \in \Phi_x(s_0)$ it follows that $-j + I_x(\varphi)\leq -j+s_0<0$. We can conclude that
   \[\liminf_{\e \to 0} \inf_{x \in A} \inf_{\varphi \in \Phi_x(s_0)} \left( a(\e) \log \Pro(\rho(X^{\e}_{x}, \varphi_n)<\delta) + I_x(\varphi)   \right) \geq 0\]
   proving \eqref{eq:fwuldp-lower}.
 \end{proof}

 \begin{lemma} \label{lem:eulp->fw-up}
   With no extra assumptions, the EULP implies the\\ FWULDP upper bound \eqref{eq:fwuldp-upper}.
 \end{lemma}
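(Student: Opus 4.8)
The plan is to run the argument of Lemma \ref{lem:eulp->fw-low} in the opposite direction, using test functions adapted to the full level set $\Phi_x(s)$ rather than to individual points. Fix $A \in \mathscr{A}$, $\delta > 0$ and $s_0 > 0$, and choose some $j > s_0$. For each $x \in A$ and $s \in [0,s_0]$ I would define
\[
  h_{x,s}(\psi) = j - j\min\left\{\frac{\dist(\psi,\Phi_x(s))}{\delta},\, 1\right\},
\]
with the usual convention $\dist(\psi,\emptyset) = +\infty$ (so $h_{x,s} \equiv 0$ when $\Phi_x(s) = \emptyset$), and set $L = \{h_{x,s} : x \in A,\ s \in [0,s_0]\}$. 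Since $\psi \mapsto \dist(\psi,\Phi_x(s))$ is $1$-Lipschitz for every $(x,s)$, each $h_{x,s}$ takes values in $[0,j]$ and is $(j/\delta)$-Lipschitz, so $L$ is equibounded and equicontinuous and the EULP \eqref{eq:eulp} applies to it. It is also worth recording at the outset that applying the EULP to constant test functions forces $\inf_{\varphi\in\mathcal{E}} I_x(\varphi) = 0$ for every $x\in A$, so the infima appearing below are all finite.

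The argument then rests on two properties of $h_{x,s}$. First, $h_{x,s}(\psi) = 0$ whenever $\dist(\psi,\Phi_x(s)) \geq \delta$, so that
\[
  \E\exp\left(-\frac{h_{x,s}(X^\e_x)}{a(\e)}\right) \geq \Pro\big(\dist(X^\e_x,\Phi_x(s)) \geq \delta\big),
\]
and hence $a(\e)\log\Pro(\dist(X^\e_x,\Phi_x(s)) \geq \delta) \leq a(\e)\log\E\exp(-h_{x,s}(X^\e_x)/a(\e))$. Second, $\inf_{\varphi\in\mathcal{E}}\{h_{x,s}(\varphi) + I_x(\varphi)\} \geq s$ for every $x \in A$ and $s \in [0,s_0]$: if $\varphi \in \Phi_x(s)$ then $\dist(\varphi,\Phi_x(s)) = 0$, so $h_{x,s}(\varphi) = j > s_0 \geq s$, while if $\varphi \notin \Phi_x(s)$ then $I_x(\varphi) > s$ and $h_{x,s}(\varphi) \geq 0$; in either case $h_{x,s}(\varphi) + I_x(\varphi) \geq s$, and taking the infimum over $\varphi$ gives the claim.

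To conclude, fix $\eta > 0$. By the EULP there is $\e_0 > 0$ such that for all $\e < \e_0$, all $x \in A$ and all $h \in L$,
\[
  a(\e)\log\E\exp\left(-\frac{h(X^\e_x)}{a(\e)}\right) \leq -\inf_{\varphi\in\mathcal{E}}\{h(\varphi) + I_x(\varphi)\} + \eta.
\]
Applying this with $h = h_{x,s}$ and combining it with the two inequalities above gives, for all $\e < \e_0$, $x \in A$ and $s \in [0,s_0]$,
\[
  a(\e)\log\Pro\big(\dist(X^\e_x,\Phi_x(s)) \geq \delta\big) + s \leq -\inf_{\varphi\in\mathcal{E}}\{h_{x,s}(\varphi) + I_x(\varphi)\} + \eta + s \leq \eta.
\]
Taking the supremum over $x \in A$ and $s \in [0,s_0]$, then the $\limsup$ as $\e \to 0$, and finally letting $\eta \to 0$ yields the FWULDP upper bound \eqref{eq:fwuldp-upper}.

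I do not expect a genuine obstacle here; the delicate points are purely bookkeeping. The essential one is that the test-function family must be equicontinuous \emph{uniformly} in the parameters $(x,s)$, which is exactly why $\dist(\cdot,\Phi_x(s))$ --- always $1$-Lipschitz, irrespective of the underlying set --- is the right building block, rather than, say, functions built directly out of $I_x$. The other is checking that the inequality $\inf_{\varphi}\{h_{x,s}(\varphi) + I_x(\varphi)\} \geq s$ remains valid in the degenerate situations (when $\Phi_x(s)$ is empty, or $s = 0$), so that no separate case analysis is needed.
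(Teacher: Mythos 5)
Your proof is correct and is essentially the same as the paper's: the same test functions $h_{j,\delta,s,x}(\psi) = j - j\min\{\dist(\psi,\Phi_x(s))/\delta,1\}$, the same equibounded/equi-Lipschitz family indexed by $(x,s)$, and the same two key inequalities feeding into the EULP. Your extra remarks on the degenerate cases (empty level sets, constant test functions) are harmless additions to the argument the paper gives.
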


 \begin{proof}
   Assume that $X^\e_x$ satisfies an EULP with respect to rate function $I_x$ with speed $a(\e)$ uniformly over $\mathscr{A}$.
   Fix $A \in \mathscr{A}$, $\delta>0$, and $s_0>0$. Let $j>s_0$. For any $x \in A$ and $s>0$, define the functions from $\mathcal{E} \to \mathbb{R}$
   \[h_{j,\delta,s,x}(\psi) = j - j \min\left\{\frac{\dist(\psi, \Phi_x(s))}{\delta} , 1 \right\}.\]
   For fixed $j$ and $\delta$, $L:= \{h_{j,\delta,s,x}: s \in [0,s_0], x \in A\}$ is a bounded equicontinuous family of functions, bounded by $j$ and with Lipschitz constant $\frac{j}{\delta}$. Observe that if $\dist(\psi, \Phi_x(s)) \geq \delta$ then $h_{j,\delta,s,x}(\psi) = 0$ implying that
   \begin{equation} \label{eq:exp-lower-bound-eulp}
     \E \exp \left(- \frac{h_{j,\delta,s,x}(X^\e_x)}{a(\e)} \right) \geq \Pro(\dist(X^\e_x, \Phi_x(s))\geq \delta).
   \end{equation}
   Note that either $\varphi \in \Phi_x(s)$, in which case $h_{j,\delta,s,x}(\varphi) = j>s$, or $\varphi \not \in \Phi_x(s)$, in which case $I_x(\varphi)>s$ implying that
   \begin{equation} \label{eq:inf-h-I-lower-bound-eulp}
     \inf_{\varphi \in \mathcal{E}} \{h_{j,\delta,s,x}(\varphi) + I_x(\varphi)\} \geq s.
   \end{equation}

   It follows from \eqref{eq:exp-lower-bound-eulp}, \eqref{eq:inf-h-I-lower-bound-eulp}, and the EULP \eqref{eq:eulp} that
   \begin{align*}
     &\limsup_{\e \to 0} \sup_{x \in A} \sup_{s \in [0,s_0]} \left(a(\e) \log \Pro(\dist(X^\e_x, \Phi_x(s))\geq \delta)  + s \right) \\
     &\leq  \limsup_{\e \to 0} \sup_{x \in A} \sup_{s \in[0, s_0]} \left(\E \exp \left(- \frac{h_{j,\delta,s,x}(X^\e_x)}{a(\e)} \right) + \inf_{\varphi \in \mathcal{E}} \{h_{j,\delta,s,x}(\varphi) + I_x(\varphi)\} \right)\\
     &\leq 0,
   \end{align*}
   proving \eqref{eq:fwuldp-upper}.
 \end{proof}

 \begin{lemma} \label{lem:fw->eulp-low}
   With no extra assumptions, the FWULDP implies the \\EULP lower bound. For any $A \in \mathscr{A}$ and family $L \subset C(\mathcal{E})$ of equibounded, equicontinuous functions from $\mathcal{E} \to \mathbb{R}$,
   \begin{equation} \label{eq:eulp-lower}
     \liminf_{\e \to 0} \inf_{x \in A} \inf_{h \in L} \left(a(\e) \log \E \exp \left( - \frac{h(X^\e_x)}{a(\e)}  \right) + \inf_{\varphi \in \mathcal{E}} \{h(\varphi) + I_x(\varphi)\} \right) \geq 0.
   \end{equation}
 \end{lemma}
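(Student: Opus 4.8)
The plan is to imitate the proof of Lemma~\ref{lem:fw->ulp-low}, but to replace the uniform continuity of a single test function near the compact set $\bigcup_{x\in A}\Phi_x(s_0)$---which in that proof was supplied by Assumption~\ref{assum:compact-level-union}---by the equicontinuity and equiboundedness of the family $L$. The observation driving this is that precompactness of the level sets was used in Lemma~\ref{lem:fw->ulp-low} \emph{only} to produce a modulus of continuity valid uniformly on the relevant part of $\mathcal{E}$; for an equibounded, equicontinuous family $L$ such a uniform modulus is available at once---uniformly in $h\in L$ and over all of $\mathcal{E}$---with no compactness and no topology on $\mathcal{E}_0$.

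In detail, I would fix $\eta>0$, put $M:=\sup_{h\in L}\|h\|_{C(\mathcal{E})}<+\infty$ and $s_0:=2M+\eta/2$. By the equicontinuity of $L$ there is a single $\delta>0$ with $\sup_{h\in L}\sup_{\rho(\varphi,\psi)<\delta}|h(\varphi)-h(\psi)|<\eta/2$. For each $x\in A$ and $h\in L$ choose $\varphi_{x,h}\in\mathcal{E}$ with $h(\varphi_{x,h})+I_x(\varphi_{x,h})\le \inf_{\varphi\in\mathcal{E}}\{h(\varphi)+I_x(\varphi)\}+\eta/2$; by Lemma~\ref{lem:inf-bound}, $\varphi_{x,h}\in\Phi_x(s_0)$. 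Restricting the expectation to the event $\{\rho(X^\e_x,\varphi_{x,h})<\delta\}$ and using $h(X^\e_x)\le h(\varphi_{x,h})+\eta/2$ on that event, one obtains, exactly as in \eqref{eq:exp-lower-bound-fw},
\[
  a(\e)\log\E\exp\!\left(-\frac{h(X^\e_x)}{a(\e)}\right)+\inf_{\varphi\in\mathcal{E}}\{h(\varphi)+I_x(\varphi)\}\ \ge\ a(\e)\log\Pro(\rho(X^\e_x,\varphi_{x,h})<\delta)+I_x(\varphi_{x,h})-\eta.
\]
Since $\varphi_{x,h}\in\Phi_x(s_0)$, the right-hand side is at least $\inf_{\varphi\in\Phi_x(s_0)}\big(a(\e)\log\Pro(\rho(X^\e_x,\varphi)<\delta)+I_x(\varphi)\big)-\eta$, so taking $\inf_{x\in A}\inf_{h\in L}$, then $\liminf_{\e\to0}$, and applying the FWULDP lower bound \eqref{eq:fwuldp-lower} shows that the left-hand side of \eqref{eq:eulp-lower} is $\ge-\eta$; letting $\eta\downarrow0$ yields \eqref{eq:eulp-lower}.

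I do not expect a genuine obstacle here: once the above observation is made the argument is essentially the bookkeeping of Lemma~\ref{lem:fw->ulp-low}. The only two points needing care are (i) that the $\delta$ extracted from equicontinuity works simultaneously for every $h\in L$ and at every point of $\mathcal{E}$, so that the bound $h(X^\e_x)\le h(\varphi_{x,h})+\eta/2$ holds on the whole $\delta$-ball around $\varphi_{x,h}$ regardless of $x$ and $h$; and (ii) that $\varphi_{x,h}$ lands in $\Phi_x(s_0)$ with $s_0$ independent of $x$ and $h$, which is where equiboundedness enters, via Lemma~\ref{lem:inf-bound}, and which is what makes the final appeal to \eqref{eq:fwuldp-lower} legitimate.
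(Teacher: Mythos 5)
Your proposal is correct and follows essentially the same route as the paper's proof: choose near-minimizers $\varphi_{x,h}\in\Phi_x(s_0)$ with $s_0=2\sup_{h\in L}\|h\|_{C(\mathcal{E})}+\eta/2$ via Lemma \ref{lem:inf-bound}, use the single $\delta$ supplied by equicontinuity of $L$ in place of the compactness argument of Lemma \ref{lem:fw->ulp-low}, restrict the expectation to $\{\rho(X^\e_x,\varphi_{x,h})<\delta\}$, and invoke the FWULDP lower bound. No gaps.
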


 \begin{proof}
   Assume that $X^\e_x$ satisfies a FWULDP with respect to rate function $I_x$ with speed $a(\e)$ uniformly over $\mathscr{A}$.
   Let $A \in \mathscr{A}$ and $L$ be a family of uniformly bounded equicontinuous functions from $\mathcal{E} \to \mathbb{R}$. Fix $\eta>0$. For each $x \in A$ and $h \in L$, there exists $\varphi_{x,h} \in \mathcal{E}$ such that
   \begin{equation} \label{eq:h-upper-eulp}
     h(\varphi_{x,h}) + I_x(\varphi_{x,h}) \leq \inf_{\varphi \in \mathcal{E}} \{h(\varphi) + I_x(\varphi)\} + \eta/2.
   \end{equation}
   If we let $s_0 = 2 \sup_{h \in L} \|h\|_{C(\mathcal{E})} + \eta/2$, then Lemma \ref{lem:inf-bound} guarantees that $\varphi_{x,h} \in \Phi_x(s_0)$.

   Because $L$ is a equicontinuous set, there exists $\delta>0$ such that for any $h \in L$, $\rho(\varphi,\psi)<\delta$ implies $|h(\varphi) - h(\psi)|< \eta/2$. In particular, for any $x \in A$, $h \in L$, and $\e>0$,
   \begin{align} \label{eq:exp-upper-eulp}
     &\E \left(- \frac{h(X^\e_x)}{a(\e)} \right) \geq \E \left(\exp \left(-\frac{h(X^\e_x)}{a(\e)} \right) \mathbbm{1}_{\{\rho(X^\e_x,\varphi_{x,h})<\delta\}} \right)\nonumber \\
     &\geq \exp\left( - \frac{(h(\varphi_{x,h}) + \eta/2)}{a(\e)} \right) \Pro(\rho(X^\e_x, \varphi_{x,h})<\delta).
   \end{align}

   Therefore, by \eqref{eq:h-upper-eulp}, \eqref{eq:exp-upper-eulp}, and the FWULDP lower bound \eqref{eq:fwuldp-lower},
   \begin{align*}
     &\liminf_{\e \to 0} \inf_{x \in A} \inf_{h \in L} \left(a(\e) \log \E \exp\left(- \frac{h(X^\e_x)}{a(\e)} \right) + \inf_{\varphi \in \mathcal{E}} \{h(\varphi) + I_x(\varphi)\}\right)\\
     &\geq \liminf_{\e \to 0} \inf_{x \in A} \inf_{h \in L} \big(- h(\varphi_{x,h}) - \eta/2 + a(\e) \log \Pro(\rho(X^\e_x,\varphi_{x,h})<\delta)\\
      &\hspace{3cm} + h(\varphi_{x,h}) + I_x(\varphi_{x,h}) - \eta/2 \big)\\
     & \geq \liminf_{\e\to 0} \inf_{x \in A} \inf_{\varphi \in \Phi_x(s_0)} \left(a(\e) \log \Pro(X^\e_x,\varphi) + I_x(\varphi) \right) - \eta\\
     &\geq -\eta.
   \end{align*}
   The EULP lower bound \eqref{eq:eulp-lower} follows because $\eta$ was arbitrary.
 \end{proof}

 \begin{lemma} \label{lem:fw->eulp-up}
   With no extra assumptions, the FWULDP implies the \\EULP upper bound. For any $A \in \mathscr{A}$ and family $L \subset C(\mathcal{E})$ of uniformly bounded, equicontinuous functions from $\mathcal{E} \to \mathbb{R}$,
   \begin{equation} \label{eq:eulp-upper}
     \limsup_{\e \to 0} \sup_{x \in A} \sup_{h \in L} \left(a(\e) \log \E \exp \left( - \frac{h(X^\e_x)}{a(\e)}  \right) + \inf_{\varphi \in \mathcal{E}} \{h(\varphi) + I_x(\varphi)\} \right) \leq 0.
   \end{equation}
 \end{lemma}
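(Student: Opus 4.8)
The plan is to follow the proof of Lemma~\ref{lem:fw->ulp-up} almost verbatim, replacing the one place where that argument invoked uniform continuity of a single $h$ on the precompact set $\bigcup_{x\in A}\Phi_x(s_0)$ (which is where Assumption~\ref{assum:compact-level-union} entered) by the equicontinuity of the family $L$. The crucial point is that equicontinuity supplies, for every tolerance $\eta>0$, a \emph{single} modulus $\delta>0$ that works simultaneously for all $h\in L$ and at every point of $\mathcal{E}$; consequently no compactness of the level sets — and no topology on $\mathcal{E}_0$ — is needed.

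Concretely, assume the FWULDP holds, fix $A\in\mathscr{A}$, an equibounded (by $M:=\sup_{h\in L}\|h\|_{C(\mathcal{E})}<+\infty$) equicontinuous family $L$, and $\eta>0$. Put $s_0=2M+\eta$ and use equicontinuity to choose $\delta>0$ such that $|h(\varphi)-h(\psi)|<\eta/2$ for every $h\in L$ whenever $\rho(\varphi,\psi)<\delta$. Pick $N\in\NN$ with $N\eta/2>s_0$, and for each $x\in A$ define the sets $E_0^x,\dots,E_N^x$ exactly as in the proof of Lemma~\ref{lem:fw->ulp-up}, using the thresholds $k\eta/2$ of the level sets $\Phi_x$; these depend only on $x$ (not on $h$) and cover $\mathcal{E}$. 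Splitting $\E\exp(-h(X^\e_x)/a(\e))$ over the $E_k^x$, bounding $h$ below by $\inf_{E_k^x}h$ on each piece, taking $a(\e)\log$, and adding and subtracting $k\eta/2$ gives, for every $x\in A$ and $h\in L$,
\[
a(\e)\log\E\exp\!\Bigl(-\tfrac{h(X^\e_x)}{a(\e)}\Bigr)
\le a(\e)\log(N+1)+\max_{0\le k\le N}\bigl\{a(\e)\log\Pro(X^\e_x\in E_k^x)+\tfrac{k\eta}{2}\bigr\}-\min_{0\le k\le N}\bigl\{\inf_{\varphi\in E_k^x}h(\varphi)+\tfrac{k\eta}{2}\bigr\}.
\]

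For $k\ge1$ we have $\Pro(X^\e_x\in E_k^x)\le\Pro(\dist(X^\e_x,\Phi_x(k\eta/2))\ge\delta)$, so the FWULDP upper bound \eqref{eq:fwuldp-upper} (with parameter $N\eta/2$ in place of $s_0$) yields $\lim_{\e\to0}\sup_{x\in A}\max_{0\le k\le N}\{a(\e)\log\Pro(X^\e_x\in E_k^x)+k\eta/2\}\le0$, the $k=0$ term being harmless since $\Pro\le1$. For the last term one checks, exactly as in Lemma~\ref{lem:fw->ulp-up}, that for every $x\in A$ and $h\in L$,
\[
\inf_{\varphi\in\mathcal{E}}\{h(\varphi)+I_x(\varphi)\}\le\min_{0\le k\le N}\Bigl\{\inf_{\varphi\in E_k^x}h(\varphi)+\tfrac{k\eta}{2}\Bigr\}+\eta,
\]
since any $\varphi\in E_k^x$ with $k<N$ lies within $\delta$ of some $\tilde\varphi\in\Phi_x((k+1)\eta/2)$, whence $h(\tilde\varphi)\le h(\varphi)+\eta/2$ and $I_x(\tilde\varphi)\le k\eta/2+\eta/2$; the $k=N$ case follows from Lemma~\ref{lem:inf-bound} together with $N\eta/2>s_0\ge2\|h\|_{C(\mathcal{E})}$. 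Combining the three displays, taking $\sup_{x\in A}\sup_{h\in L}$, and letting $\e\to0$ bounds the $\limsup$ in \eqref{eq:eulp-upper} by $\eta$; since $\eta>0$ was arbitrary, \eqref{eq:eulp-upper} follows.

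The only genuinely new point, and the thing to be careful about, is that $\delta$ — and hence the partition and every estimate derived from it — must be chosen uniformly over $h\in L$, and that all remaining $h$-dependence in the final bound collapses into the single additive error $\eta$. This is precisely what equicontinuity delivers (whereas continuity of a fixed $h$ would force a return to compactness of $\bigcup_{x\in A}\Phi_x(s_0)$), and it explains why the lemma requires no extra assumptions.
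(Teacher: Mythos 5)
Your proposal is correct and follows essentially the same route as the paper's own proof: the paper's argument for this lemma is indeed the proof of Lemma \ref{lem:fw->ulp-up} with the partition $E_0^x,\dots,E_N^x$ repeated verbatim, and with the single modulus $\delta$ now supplied by equicontinuity of $L$ rather than by uniform continuity of one $h$ on the precompact union of level sets. All the key steps — the choice $s_0=2\sup_{h\in L}\|h\|_{C(\mathcal{E})}+\eta$, the use of Lemma \ref{lem:inf-bound} for the $k=N$ case, and the collapse of the $h$-dependence into the additive error $\eta$ — match the paper.
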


 \begin{proof}
   Assume that $X^\e_x$ satisfies a FWULDP with respect to rate function $I_x$ with speed $a(\e)$ uniformly over $\mathscr{A}$.
   Let $A \in \mathscr{A}$ and $L \subset C(\mathcal{E})$ be a family of uniformly bounded equicontinuous functions from $\mathcal{E} \to \mathbb{R}$. Fix $\eta>0$. By  the equicontinuity of $L$, there exists $\delta>0$ such that whenever $\rho(\varphi,\psi)<\delta$ it follows that $|h(\varphi) - h(\psi)|<\eta/2$. Let $s_0 = \sup_{h \in L} 2\|h\|_{C(\mathcal{E})} + \eta$.

    Let $N \in \mathbb{N}$ be such that $ N\eta/2 > s_0$. For $x \in A$, define the subsets of $\mathcal{E}$,
    \begin{align*}
      &E_0^x = \left\{\varphi \in \mathcal{E}: \dist(\varphi, \Phi_x(\eta/2))<\delta \right\}\\
      &E^x_k = \left\{\varphi \in \mathcal{E}: \dist(\varphi, \Phi_x(k\eta/2))\geq \delta, \ \dist(\varphi, \Phi_x((k+1)\eta/2))<\delta \right\}, \\ &\text{ for }k=1,...,N-1\\
      &E^x_N = \left\{\varphi \in \mathcal{E}: \dist(\varphi, \Phi_x(N\eta/2))\geq \delta \right\}.
    \end{align*}
    Note that $\bigcup_{k=0}^N E^x_k = \mathcal{E}$.
    For any $x \in A$, $h \in L$, and $\e>0$,
    \begin{align*}
      &\E \exp \left( -\frac{h(X^\e_x)}{a(\e)}\right) \leq \sum_{k=0}^N \E \exp\left( - \frac{h(X^\e_x)}{a(\e)} \mathbbm{1}_{\{X^\e_x \in E_k^x\}} \right)\\
      &\leq \sum_{k=0}^N \exp\left(-\frac{\inf_{\varphi \in E_k^x} h(\varphi)}{a(\e)} \right) \Pro(X^\e_x \in E_k^x).
    \end{align*}
    It follows that
    \begin{align*}
      & a(\e) \log \E \exp \left( -\frac{h(X^\e_x)}{a(\e)}\right)\nonumber\\
      &\leq a(\e) \log \left(\sum_{k=0}^N \exp\left(-\frac{\inf_{\varphi \in E_k^x} h(\varphi)}{a(\e)} \right) \Pro(X^\e_x \in E_k^x) \right)\nonumber\\
      & \leq a(\e) \log(N+1) + \max_{k \in \{0,...,N\}} \left\{-\inf_{\varphi \in E_k^x}h(\varphi) + a(\e) \log \Pro(X^\e_x \in E_k^x) \right\}.
    \end{align*}
    By adding and subtracting $\frac{k\eta}{2}$,
    \begin{align}\label{eq:log-estimate-2}
      & a(\e) \log \E \exp \left( -\frac{h(X^\e_x)}{a(\e)}\right)\nonumber\\
      & \leq a(\e) \log(N+1) + \max_{k \in \{0,...,N\}} \left\{  a(\e) \log \Pro(X^\e_x \in E_k^x) + k\eta/2 \right\} \nonumber \\
      &\qquad+ \max_{k \in \{0,...,N\}} \left\{-\inf_{\varphi \in E_k^x} h(\varphi) - k\eta/2 \right\}\nonumber\\
      &\leq a(\e) \log(N+1) + \max_{k \in \{0,...,N\}} \left\{  a(\e) \log \Pro(X^\e_x \in E_k^x) + k\eta/2 \right\}\nonumber\\
       &\qquad- \min_{k \in \{0,...,N\}} \left\{\inf_{\varphi \in E_k^x} h(\varphi) + k\eta/2 \right\}.
    \end{align}
    By the definition of $E_k^x$ for $k\in\{1,..,N\}$, $$\Pro(X^\e_x \in E_k^x) \leq \Pro(\dist(X^\e_x,\Phi_x(k\eta/2))\geq\delta)$$ and it follows by the FWULDP upper bound \eqref{eq:fwuldp-upper} that
    \begin{equation*}
      \limsup_{\e \to 0} \sup_{x \in A} \max_{k \in \{1,...,N\}} \{a(\e) \log \Pro(X^\e_x \in E_k^x) + k\eta/2\} \leq 0.
    \end{equation*}
    The $k=0$ case is trivially true because $ \Pro(X^\e_x \in E_k) \leq 1$ so it follows that
    \begin{equation} \label{eq:Ek-estimate-2}
      \lim_{\e \to 0} \sup_{x \in A} \max_{k \in \{0,...,N\}} \{a(\e) \log \Pro(X^\e_x \in E_k^x) + k\eta/2\} \leq 0.
    \end{equation}

    The last step required to prove \eqref{eq:eulp-upper} is to show that for any $x \in A$,
    \begin{equation} \label{eq:inf-leq-inf-2}
      \inf_{\varphi \in \mathcal{E}} \{h(\varphi) + I_x(\varphi)\} \leq \min_{k \in \{0,..,N\}} \left\{\inf_{\varphi \in E_k^x} h(\varphi) + k\eta/2\right\} + \eta.
    \end{equation}
    Fix $k \in \{0,...,N-1\}$. Let $\varphi \in E_k^x$ be arbitrary. By the definition of $E_k^x$, there exists $\tilde\varphi \in \Phi_x((k+1)\eta/2)$ such that $\rho(\tilde{\varphi}, \varphi)< \delta$.  At the beginning of the proof we chose $\delta$ such that the equicontinuity of $L$ implies that for all $h \in L$, $h(\tilde{\varphi})  \leq h(\varphi) + \eta/2$. Note that $I_x(\tilde\varphi) \leq k\eta/2 + \eta/2$. Therefore,
    \begin{equation*}
      (h(\varphi) + k\eta/2) + \eta \geq h(\tilde \varphi) + I_x(\tilde\varphi) \geq \inf_{\phi \in \mathcal{E}} \{h(\phi) + I_x(\phi)\}.
    \end{equation*}
    Take the infimum over $\varphi \in E_k^x$. For any $k \in \{0,...,N-1\}$ and any $x \in A$,
    \begin{equation*}
      \inf_{\varphi \in E_k^x}\left(h(\varphi) + k\eta/2\right) + \eta \geq \inf_{\varphi \in \mathcal{E}} \{h(\varphi) + I_x(\varphi)\}.
    \end{equation*}
    The above inequality also holds for $k=N$ because of Lemma \ref{lem:inf-bound} and our choice of $N$. Therefore, \eqref{eq:inf-leq-inf-2} holds.

    By  \eqref{eq:log-estimate-2}, \eqref{eq:Ek-estimate-2}, and \eqref{eq:inf-leq-inf-2},
    \begin{align*}
      &\limsup_{\e \to 0} \sup_{x \in A} \sup_{h \in L} \left(a(\e)\log \E \exp \left(- \frac{h(X^\e_x)}{a(\e)} \right) + \inf_{\varphi \in \mathcal{E}} \{h(\varphi) + I_x(\varphi)\} \right)\\
      &\leq \limsup_{\e \to 0} \left( a(\e)\log(N+1) +  \sup_{x \in A} \max_{k \in \{0,...,N\}} \{a(\e) \log \Pro(X^\e_x \in E_k^x) + k\eta/2\}
      \right)\\
      &\qquad + \sup_{x \in A} \sup_{h \in L} \left(\inf_{\varphi\in \mathcal{E}} \{h(\varphi) + I_x(\varphi)\} -  \min_{k \in \{0,...,N\}}\left\{\inf_{\varphi \in E_k^x}h(\varphi) + k\eta/2\right\} \right)\\
      & \leq \eta.
    \end{align*}
    The EULP upper bound \eqref{eq:eulp-upper} follows because $\eta>0$ was arbitrary.
 \end{proof}

\section{Proof of Theorem \ref{thm:eulp-sufficient}} \label{S:EULP}
  Assume that Assumption \ref{assum:mathcal-G} holds. This means that for any $\e>0$ and $x \in \mathcal{E}_0$, $X^\e_x = \mathscr{G}_x (\sqrt{\e}\beta)$.
  By the variational principle of \cite[Theorem 2]{bdm-2008}, for any bounded continuous $h: \mathcal{E} \to \mathbb{R}$, $x \in \mathcal{E}_0$ and $\e>0$,
  \begin{align} \label{eq:variational}
    &\e \log \E \exp \left(- \frac{h(X^\e_x)}{\e} \right) =\nonumber\\
     &-\inf_{u \in \mathcal{P}_2} \E\left\{\frac{1}{2}\int_0^T |u(s)|_U^2 ds + h \left(\mathscr{G}^\e_x \left( \sqrt{\e}\beta(\cdot) + \int_0^\cdot u(s)ds \right) \right) \right\}.
  \end{align}
  Similarly, by the definition of the rate function \eqref{eq:rate-fct},
  \begin{align} \label{eq:variational-rate-fct}
    &\inf_{\varphi \in \mathcal{E}} \{I_x(\varphi) + h(\varphi)\}\nonumber\\
     &= \inf_{u \in L^2([0,T]:U)} \left\{\frac{1}{2}\int_0^T |u(s)|_U^2 ds + h\left(\mathscr{G}^0_x\left( \int_0^\cdot u(s)ds \right) \right) \right\}.
  \end{align}

  Let $L\subset C(\mathcal{E})$ be a set of equicontinuous, equibounded functions from $\mathcal{E} \to \mathbb{R}$ and let $A \in \mathscr{A}$. To simplify the notation of the proof, for any $x \in \mathcal{E}_0$, $u \in \mathcal{P}_2$, and $\e\geq 0$ set
  \begin{equation}
    X^{\e,u}_x:= \mathscr{G}^\e_x\left( \sqrt{\e}\beta + \int_0^\cdot u(s)ds\right).
  \end{equation}

  \textbf{Upper bound}

  Let $x_n \in A$, $h_n \in L$, and $\e_n \downarrow 0$ be arbitrary sequences. Let $\eta>0$. Following the localization arguments of \cite[Theorem 4.4]{bd-2000} and the fact that the $h_n$ are equibounded, we can choose $N>0$ large enough and $u_n \in \mathcal{P}_2^N$  satisfying (see \eqref{eq:variational})
  \begin{align*}
    &\e_n \log \E \exp \left(-\frac{h_n(X^{\e_n}_{x_n})}{\e_n} \right)\\
    &= -\inf_{u \in \mathcal{P}_2}\E \left\{\frac{1}{2}\int_0^T |u(s)|_U^2 ds + h_n \left( 
    X^{\e_n,u}_{x_n}\right) \right\}\\
    &\leq -\E \left\{\frac{1}{2}\int_0^T |u_n(s)|_U^2 ds + h_n \left( 
    X^{\e_n,u_n}_{x_n}\right) \right\} + \eta/3.
  \end{align*}

  Then because the right-hand side of \eqref{eq:variational-rate-fct} is an infimum,
  \begin{align*}
    &\inf_{\varphi \in \mathcal{E}} \left\{I_{x_n}(\varphi) + h_n(\varphi)\right\}\\
    &=\inf_{u \in L^2([0,T]:U)} \left\{\frac{1}{2}\int_0^T |u(s)|_U^2 ds + h_n\left(
    X^{0,u}_{x_n}\right) \right\}\\
    &\leq \E\left\{\frac{1}{2}\int_0^T |u_n(s)|_U^2 ds + h_n\left(
    X^{0,u_n}_{x_n}\right) \right\}.
  \end{align*}

  By these estimates,
  \begin{align} \label{eq:upper-diff}
    &\e_n \log \E \exp \left(- \frac{h_n(X^{\e_n}_{x_n})}{\e_n} \right) + \inf_{\varphi\in\mathcal{E}} \{I_{x_n}(\varphi) + h_n(\varphi) \} \nonumber\\
    &\leq
      -\E \left\{\frac{1}{2}\int_0^T |u_n(s)|_U^2 ds + h_n \left( 
      X^{\e_n,u_n}_{x_n}\right) \right\}  \nonumber\\&\qquad
      + \E\left\{\frac{1}{2}\int_0^T |u_n(s)|_U^2 ds + h_n\left(
      X^{0,u_n}_{x_n}\right) \right\} + \eta/3
     \nonumber\\
    &\leq  \E \left\{-h_n \left(
    X^{\e_n,u_n}_{x_n}\right) +  h_n\left(
    X^{0,u_n}_{x_n}\right) \right\} + \eta/3.
  \end{align}
  Because the family $L$ is assumed to be bounded and equicontinuous, there exists $M>0$ and $\delta>0$ such that for all $n \in \NN$,
  \begin{equation} \label{eq:hn-bded-equicont}
    \|h_n\|_{C(\mathcal{E})} \leq M \text{ and if }\rho(\varphi,\psi)\leq\delta \text{ then } |h_n(\varphi) - h_n(\psi)|\leq \eta/3.
  \end{equation}
  This means that for any two $\mathcal{E}$-valued random variables,
  \[\E|h_n(X_1) - h_n(X_2)| \leq 2M \Pro(\rho(X_1,X_2)>\delta) + \frac{\eta}{3}\Pro(\rho(X_1,X_2)\leq \delta).\]
   In particular, \eqref{eq:upper-diff} guarantees that
  \begin{align}
    &\limsup_{n \to +\infty} \left(\e_n \log \E \exp \left(- \frac{h_n(X^{\e_n}_{x_n})}{\e_n} \right) + \inf_{\varphi\in\mathcal{E}} \{I_{x_n}(\varphi) + h_n(\varphi) \}\right) \nonumber\\
    &\leq \limsup_{n \to +\infty}  2M \Pro\left(\rho  \left(
    X^{\e_n,u_n}_{x_n}, X^{0,u_n}_{x_n}\right)>\delta
    \right) + \frac{2\eta}{3}.
  \end{align}
  Assumption \ref{assum:mathcal-G} guarantees that
  \[\lim_{n \to +\infty}\Pro\left(\rho\left(
  X^{\e_n,u_n}_{x_n}, X^{0,u_n}_{x_n}\right)>\delta\right)=0.\]
  Therefore,
  \[\limsup_{n \to +\infty} \left(\e_n \log \E \exp \left(- \frac{h_n(X^{\e_n}_{x_n})}{\e_n} \right) + \inf_{\varphi\in\mathcal{E}} \{I_{x_n}(\varphi) + h_n(\varphi) \}\right)\leq \eta.\] The EULP upper bound follows because the sequences $(h_n, x_n, \e_n)$ and $\eta>0$ were arbitrary.

  \textbf{Lower bound}

  The proof of the EULP lower bound is almost exactly the same as that of the upper bound.
  Let $x_n \in A$, $h_n \in L$, and $\e_n \downarrow 0$ be arbitrary. Fix $\eta>0$. Lemma \ref{lem:inf-bound} along with the definition of the rate function \eqref{eq:rate-fct} guarantee that we can choose $N>0$ large enough and find and $u_n\in \mathcal{S}^N$ so that by \eqref{eq:variational-rate-fct},
  \begin{align*}
    &\inf_{\varphi \in \mathcal{E}} \{I_{x_n}(\varphi) + h_n(\varphi)\} = \inf_{u \in L^2([0,T]:U)} \left\{\frac{1}{2}\int_0^T |u(s)|_U^2 ds + h_n\left(
    X^{0,u}_{x_n}\right) \right\}\\
    & \geq \frac{1}{2}\int_0^T |u_n(s)|_U^2 ds + h_n\left(
    X^{0,u_n}_{x_n}\right) -\frac{\eta}{3}.
  \end{align*}

  Because the right-hand side of \eqref{eq:variational} includes an infimum,
  \begin{align*}
    &\e_n \log \E \exp \left(- \frac{h_n(X^{\e_n}_{x_n})}{\e_n} \right)
     = -\inf_{u \in \mathcal{P}_2}\E \left\{\frac{1}{2}\int_0^T |u(s)|_U^2 ds + h_n \left(
     X^{\e_n,u}_{x_n}\right) \right\}\\
    &\geq -\E\left\{\frac{1}{2}\int_0^T |u_n(s)|_U^2 ds + h_n \left(
    X^{\e_n,u_n}_{x_n}\right)\right\}.
  \end{align*}
  Combining these estimates and remembering that the chosen $u_n$ are non-random,
  \begin{align*}
    &\e_n \log \E \exp \left(- \frac{h_n(X^{\e_n}_{x_n})}{\e_n} \right) + \inf_{\varphi \in \mathcal{E}} \{I_{x_n}(\varphi) + h_n(\varphi)\}  \\
    &\geq \frac{1}{2}\int_0^T |u_n(s)|_U^2 ds + h_n\left(
    X^{0,u_n}_{x_n}\right) -\frac{\eta}{3} \\
     &\qquad-\E\left\{\frac{1}{2}\int_0^T |u_n(s)|_U^2 ds + h_n \left(
     X^{\e_n,u_n}_{x_n}\right)\right\}\\
    &\geq  h_n\left(
    X^{0,u_n}_{x_n}\right) - \E h_n \left(
    X^{\e_n,u_n}_{x_n}\right) -\frac{\eta}{3}.
  \end{align*}
  Because $L$ is a family of bounded and equicontinuous functions, there exists $M\geq 0$ and $\delta>0$ such that \eqref{eq:hn-bded-equicont} holds. In particular,
  \begin{align*}
    &h_n\left(
    X^{0,u_n}_{x_n}\right) - \E h_n \left(
    X^{\e_n,u_n}_{x_n}\right)
    \geq -2M \Pro\left(\rho \left(
    X^{0,u_n}_{x_n} , 
    X^{\e_n,u_n}_{x_n}\right)>\delta \right) - \frac{\eta}{3}.
  \end{align*}
  By Assumption \ref{assum:mathcal-G},
  \[\lim_{n \to +\infty} \Pro\left(\rho \left(
    X^{0,u_n}_{x_n} , 
    X^{\e_n,u_n}_{x_n}\right)>\delta \right) = 0.\]
  Therefore,
  \[\liminf_{n \to +\infty} \left(\e_n \log \E \exp \left(- \frac{h_n(X^{\e_n}_{x_n})}{\e_n} \right) + \inf_{\varphi \in \mathcal{E}} \{I_{x_n}(\varphi) + h_n(\varphi)\}  \right) \geq -\eta.\]
  Theorem \ref{thm:eulp-sufficient} follows because the sequences $(h_n, x_n, \e_n)$ and $\eta>0$ chosen were arbitrary.

\begin{appendix}
  \section{Some properties of rate function} \label{S:properties}
In this appendix, we collect some useful results on the properties of rate functions. The first result in this section says that if $I_x$ is a large deviations rate function for a collection of $\mathcal{E}$-valued random variables $X^\e_x$ then $\inf_{\varphi \in \mathcal{E}} I_x(\varphi) = 0$.

\begin{lemma} \label{lem:inf-I}
  Fix $x \in \mathcal{E}_0$ and suppose that $\{X^\e_x\}$ is a collection of $\mathcal{E}$-valued random variables and $I_x$ is a rate function. Assume that either
  \begin{enumerate}[(a)]
    \item For any closed set $F \subset \mathcal{E}$,
    \[\limsup_{\e \to 0} a(\e) \log \Pro(X^\e_x \in F) \leq -\inf_{\varphi \in F}I_x(\varphi),\]
    \item For any $\delta>0$ and $s_0>0$,
    \[\limsup_{\e \to 0} \sup_{s \in [0,s_0]} \left(a(\e) \log\Pro(\dist(X^\e_x, \Phi_x(s))\geq \delta) + s\right)\leq 0,\]
    or
    \item For any bounded continuous $h : \mathcal{E} \to \mathbb{R}$,
    \[\limsup_{\e \to 0} \left|a(\e) \log \E \exp \left( - \frac{h(X^\e_x)}{a(\e)} \right) + \inf_{\varphi \in \mathcal{E}} \{h(\varphi) + I_x(\varphi)\} \right|=0.\]
  \end{enumerate}
  Then it follows that
  \begin{equation} \label{eq:inf-I}
    \inf_{\varphi \in \mathcal{E}} I_x(\varphi) = 0.
  \end{equation}
  Furthermore, if $I_x$ is a good rate function, then there exists $\tilde\varphi \in \mathcal{E}$ such that $I_x(\tilde\varphi) = 0$.
\end{lemma}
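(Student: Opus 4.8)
The plan is to verify the conclusion separately under each of the three hypotheses, in each case by testing the assumed inequality against the most trivial admissible object, and then to obtain the attainment statement from the compactness of a single level set.

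Under hypothesis (a) I would take $F = \mathcal{E}$, which is closed. Since $\Pro(X^\e_x \in \mathcal{E}) = 1$ for every $\e > 0$, the left-hand side of the bound is $\limsup_{\e \to 0} a(\e) \log 1 = 0$, so the inequality becomes $0 \le -\inf_{\varphi \in \mathcal{E}} I_x(\varphi)$; together with $I_x \ge 0$ this is exactly \eqref{eq:inf-I}. Under hypothesis (c) the corresponding choice is $h \equiv 0$: then $\E \exp(-h(X^\e_x)/a(\e)) = 1$ for all $\e$, so the displayed limit forces $0 = -\inf_{\varphi \in \mathcal{E}}\{0 + I_x(\varphi)\}$, and again $\inf_{\varphi \in \mathcal{E}} I_x(\varphi) = 0$.

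Hypothesis (b) is the case requiring the most attention, because of the convention $\dist(\varphi, \emptyset) = +\infty$. Here I would argue by contradiction. Suppose $c := \inf_{\varphi \in \mathcal{E}} I_x(\varphi) > 0$ (possibly $c = +\infty$). For every $s$ with $0 \le s < c$ the level set $\Phi_x(s)$ is empty, so $\dist(X^\e_x, \Phi_x(s)) = +\infty \ge \delta$ surely, and hence $a(\e) \log \Pro(\dist(X^\e_x, \Phi_x(s)) \ge \delta) = a(\e) \log 1 = 0$ for every $\e > 0$. Taking $s_0 = \min\{c, 1\} > 0$ (admissible and finite) and any $\delta > 0$, the supremum over $s \in [0, s_0]$ is at least $\sup_{0 \le s < s_0} s = s_0$, so $\limsup_{\e \to 0} \sup_{s \in [0, s_0]}(a(\e) \log \Pro(\dist(X^\e_x, \Phi_x(s)) \ge \delta) + s) \ge s_0 > 0$, contradicting (b). Therefore $c \le 0$, and with $I_x \ge 0$ we conclude \eqref{eq:inf-I}.

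Finally, assume in addition that $I_x$ is a good rate function. Since $\inf_{\varphi \in \mathcal{E}} I_x(\varphi) = 0$, choose a minimizing sequence $\{\varphi_n\} \subset \mathcal{E}$ with $I_x(\varphi_n) \to 0$; discarding finitely many terms we may assume $I_x(\varphi_n) \le 1$ for all $n$, so $\{\varphi_n\} \subset \Phi_x(1)$, which is compact by goodness of $I_x$. Passing to a convergent subsequence $\varphi_{n_k} \to \tilde\varphi$ in $\mathcal{E}$, lower semicontinuity of $I_x$ yields $I_x(\tilde\varphi) \le \liminf_{k \to \infty} I_x(\varphi_{n_k}) = 0$, hence $I_x(\tilde\varphi) = 0$. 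The argument is essentially routine; the only delicate point is the empty-level-set convention invoked in step (b).
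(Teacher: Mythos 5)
Your proposal is correct and follows essentially the same route as the paper: test (a) with $F=\mathcal{E}$, test (c) with $h\equiv 0$, argue (b) by contradiction via an empty level set, and extract the minimizer from a compact level set by lower semicontinuity. Your handling of (b) is in fact slightly more careful than the paper's (which tacitly uses $s/2$ as the threshold and does not address the possibility $\inf I_x=+\infty$), but the idea is identical.
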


\begin{proof}
  This topic is discussed in \cite[Chapter 1.2]{dz} under assumption (a). In this proof, we show that the result is true regardless of the definition of large deviations principle that we use. If (a) is true, then noting that $F= \mathcal{E}$ is a closed set $\Pro(X^\e_x \in \mathcal{E}) = 1$ so
  \[-\inf_{\varphi \in \mathcal{E}} I_x(\varphi) \geq \limsup_{\e \to 0} a(\e) \log \Pro(X^\e_x \in \mathcal{E}) = 0.\]

  If (b) holds, then we prove the result by contradiction. Assume by contradiction that $\inf_{\varphi \in \mathcal{E}}  I_x(\varphi) = s>0$. This means that the level set $\Phi_x(s/2) = \emptyset$. Then for any $\varphi \in \mathcal{E}$, $\dist(\varphi, \Phi_x(s/2)) = +\infty$. Therefore for any $\delta>0$, $\Pro(\dist(X^\e_x, \Phi_x(s/2))\geq \delta) = 1$. This contradicts (b) because $$a(\e) \log \Pro(\dist(X^\e_x, \Phi_x(s/2))\geq \delta) + s/2 = s/2 >0.$$

  If (c) holds, then we set $h$ to be the constant function $h(\psi) \equiv 0$. Then $\E \exp \left(-\frac{h(X^\e_x)}{a(\e)}\right) = 1$ and $0=\inf_{\varphi \in \mathcal{E}} \{h(\varphi) + I_x(\varphi)\} = \inf_{\varphi \in \mathcal{E}} I_x(\varphi)$ proving the result.

  Finally, if $I_x$ is a good rate function then the minimum is attained. Specifically, we can find a sequence $\varphi_n \in \mathcal{E}$ such that $\lim_{n \to +\infty}I_x(\varphi_n)=0$. By the compactness of level sets, a subsequence converges to a limit $\tilde{\varphi}$. This limit has the property that for any $\delta>0$, $I_x(\tilde\varphi)<\delta$. Therefore $I_x(\tilde \varphi) = 0$.
\end{proof}

\begin{lemma} \label{lem:inf-bound}
  For any $x \in \mathcal{E}_0$, suppose that $\{X^\e_x\}$ satisfies a large deviations principle with respect to the rate function $I_x$. Let $h: \mathcal{E} \to \mathbb{R}$ be a bounded and continuous function.
  \begin{enumerate}
    \item It follows that
    \begin{equation} \label{eq:h-I-sup-bound}
      \inf_{\varphi \in \mathcal{E}} \{h(\varphi) + I_x(\varphi)\} \leq \|h\|_{C(\mathcal{E})}.
     \end{equation}
    \item If $\varphi^\delta \in \mathcal{E}$ is such that
      \begin{equation*}
        h(\varphi^\delta) + I_x(\varphi^\delta) \leq \inf_{\varphi \in \mathcal{E}} \{h(\varphi) + I_x(\varphi)\}+\delta.
      \end{equation*}
      then $\varphi^\delta \in \Phi_x(2\|h\|_{C(\mathcal{E})} + \delta)$.
    \item If $I_x$ is a good rate function, then there exists $\varphi^0 \in \Phi_x(2\|h\|_{C(\mathcal{E})})$ such that
       \begin{equation} \label{eq:h-I-0}
         h(\varphi^0) + I_x(\varphi^0) = \inf_{\varphi \in \mathcal{E}} \{h(\varphi) + I_x(\varphi)\}.
       \end{equation}
  \end{enumerate}
\end{lemma}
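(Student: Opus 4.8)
The three parts build on one another, and the single substantive input is Lemma \ref{lem:inf-I}, which guarantees that $\inf_{\varphi \in \mathcal{E}} I_x(\varphi) = 0$ whenever $\{X^\e_x\}$ satisfies a large deviations principle with rate function $I_x$.

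For part (1), the plan is to pick a minimizing sequence $\varphi_n \in \mathcal{E}$ for $I_x$, so that $I_x(\varphi_n) \to 0$ by Lemma \ref{lem:inf-I}. Then
\[
\inf_{\varphi \in \mathcal{E}} \{h(\varphi) + I_x(\varphi)\} \leq h(\varphi_n) + I_x(\varphi_n) \leq \|h\|_{C(\mathcal{E})} + I_x(\varphi_n),
\]
and letting $n \to +\infty$ gives \eqref{eq:h-I-sup-bound}.

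For part (2), suppose $\varphi^\delta$ is a $\delta$-near minimizer. Combining its defining inequality with \eqref{eq:h-I-sup-bound} and the trivial bound $-h(\varphi^\delta) \leq \|h\|_{C(\mathcal{E})}$ yields
\[
I_x(\varphi^\delta) \leq \inf_{\varphi \in \mathcal{E}} \{h(\varphi) + I_x(\varphi)\} + \delta - h(\varphi^\delta) \leq \|h\|_{C(\mathcal{E})} + \delta + \|h\|_{C(\mathcal{E})} = 2\|h\|_{C(\mathcal{E})} + \delta,
\]
so $\varphi^\delta \in \Phi_x(2\|h\|_{C(\mathcal{E})} + \delta)$.

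For part (3), assume $I_x$ is good. Take a minimizing sequence $\varphi_n$ with $h(\varphi_n) + I_x(\varphi_n) \to \inf_{\varphi \in \mathcal{E}} \{h(\varphi) + I_x(\varphi)\}$; by part (2) applied with $\delta = 1$ (and discarding finitely many terms), all $\varphi_n$ eventually lie in the compact set $\Phi_x(2\|h\|_{C(\mathcal{E})} + 1)$, so a subsequence converges to some $\varphi^0 \in \mathcal{E}$. Lower semicontinuity of $I_x$ together with continuity of $h$ gives $h(\varphi^0) + I_x(\varphi^0) \leq \liminf_{n}(h(\varphi_n) + I_x(\varphi_n)) = \inf_{\varphi \in \mathcal{E}} \{h(\varphi) + I_x(\varphi)\}$, forcing equality \eqref{eq:h-I-0}; then the estimate from part (2) with $\delta = 0$ shows $\varphi^0 \in \Phi_x(2\|h\|_{C(\mathcal{E})})$. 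There is no real obstacle here — everything is elementary once Lemma \ref{lem:inf-I} is in hand; the only point requiring a little care is extracting the convergent subsequence, which is exactly where the goodness of the rate function (compactness of level sets) is used.
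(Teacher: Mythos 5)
Your proposal is correct and follows essentially the same route as the paper: part (1) via $\inf_\varphi\{h+I_x\}\leq \|h\|_{C(\mathcal{E})}+\inf_\varphi I_x$ and Lemma \ref{lem:inf-I}, part (2) by the same two-sided bound on $h$, and part (3) by extracting a convergent subsequence of near-minimizers from a compact level set and invoking lower semicontinuity of $I_x$ together with continuity of $h$. Your explicit remark that the near-minimizers eventually lie in the single compact set $\Phi_x(2\|h\|_{C(\mathcal{E})}+1)$ is a small but welcome clarification of a step the paper leaves implicit.
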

\begin{proof}
  By Lemma \ref{lem:inf-I},
  \[\inf_{\varphi \in \mathcal{E}} \{h(\varphi) + I_x(\varphi)\} \leq \|h\|_{C(\mathcal{E})} + \inf_{\varphi \in \mathcal{E}} I_x(\varphi) \leq \|h\|_{C(\mathcal{E})}\]
  proving \eqref{eq:h-I-sup-bound}.
  For any $\delta>0$, there exists $\varphi^\delta \in \mathcal{E}$ such that
  \[h(\varphi^\delta) + I_x(\varphi^\delta) \leq \inf_{\varphi \in \mathcal{E}} \{h(\varphi) + I_x(\varphi)\} + \delta \leq \|h\|_{C(\mathcal{E})} + \delta.\]
  It follows that,
  \[I_x(\varphi^\delta) \leq 2 \|h\|_{C(\mathcal{E})} + \delta\]
  proving that $\varphi^\delta \in \Phi_x(2\|h\|_{C(\mathcal{E})} + \delta)$.

  If $I_x$ is a good rate function, then by the compactness of the level sets, there exists a subsequence $\delta_n \to 0$ and a limit $\varphi^0$ such that $\varphi^{\delta_n} \to \varphi^0$ and $I_x(\varphi^0) \leq \liminf_{ n \to +\infty} I_x(\varphi^{\delta_n})$. Because $h$ is continuous $h(\varphi^{\delta_n}) \to h(\varphi^0)$ and
  \[h(\varphi^0) + I_x(\varphi^0) = \inf_{\varphi \in \mathcal{E}} \{h(\varphi) + I_x(\varphi)\}\]
  and $\varphi^0 \in \Phi_x(2\|h\|_{C(\mathcal{E})})$.
\end{proof}

\section{Sketch of proofs of Lemmas \ref{lem:Gamma}, \ref{lem:Lambda}, and \ref{lem:Theta}} \label{S:lemmas}
\begin{proof}[Sketch of proof of Lemma \ref{lem:Gamma}]
  By the factorization method of \cite[Chapter 5.3.1]{dpz}, for any $\mathcal{F}_t$-adapted $\varphi \in C([0.T]:H)$,
  \[\Gamma(\varphi)(t)  = \frac{\sin(\pi \alpha)}{\pi} \int_0^t(t-s)^{\alpha-1} S(t-s) \Gamma_\alpha(\varphi)(s)ds\]
  where
  \[\Gamma_\alpha(\varphi)(t) = \int_0^t (t-s)^{-2\alpha}S(t-s)G(\varphi(s))dw(s).\]
  By the Burkholder-Davis-Gundy inequality, for $\varphi \in C([0,T]:H)$, $t>0$,
  \[\E|\Gamma_\alpha(\varphi)(t) |_H^p \leq \E \left(\int_0^t (t-s)^{-2\alpha}\|S(t-s)G(\varphi(s)) \|_{L_2}^2 ds \right)^{\frac{p}{2}}.\]

  If \eqref{eq:G-bounded} holds, then for $p>\frac{1}{\alpha}$, by \eqref{eq:K-int}
  \[\E|\Gamma_\alpha(\varphi)(t) |_H^p \leq \E \left(\int_0^t s^{-2\alpha} K^2(s)ds \right)^{\frac{p}{2}} \leq C .\]
  Then by \cite[equation (5.13)]{dpz}, for $p> \frac{1}{\alpha}$
  \[\E|\Gamma(\varphi) |_{C([0,t]:H)}^p \leq C \E \int_0^t |\Gamma_\alpha(\varphi)(s)|_H^p  ds\leq C .\]

  If \eqref{eq:G-grows} holds, then for $p> \frac{1}{\alpha}$, by \eqref{eq:K-int}
  \begin{align*}
    &\E|\Gamma_\alpha(\varphi)(t) |_H^p \leq C\E \left(1 +|\varphi|_{C([0,t]:H)}\right)^p  \left(\int_0^t s^{-2\alpha} K^2(s)ds \right)^{\frac{p}{2}}\\
     &\leq C \E\left(1 + |\varphi|_{C([0,t]:H)}^p \right).
  \end{align*}
  Then
  \[\E|\Gamma(\varphi) |_{C([0,t]:H)}^p \leq C \E \int_0^t |\Gamma_\alpha(\varphi)(s)|_H^p  ds\leq C \left( 1 +\E \int_0^t |\varphi|_{C([0,s]:H)}^pds\right).\]
  The proof for \eqref{eq:Gamma-Lip} is the same.
\end{proof}

\begin{proof}[Sketch of proof of Lemma \ref{lem:Lambda}]
  Fix $T>0$ and let $\varphi \in C([0,T]:H)$ and $u \in L^2([0,T]:U)$. We once again use the factorization method \cite[Chapter 5.3.1]{dpz} and observe that
  \begin{equation} \label{eq:Lambda-factor}
    [\Lambda(\varphi)u](t) = \frac{\sin(\pi\alpha)}{\pi} \int_0^t (t-s)^{\alpha - 1} S(t-s) \Lambda^\alpha_s(\varphi)uds
  \end{equation}
  where
  \begin{equation} \label{eq:Lambda-alpha}
    \Lambda^\alpha_t(\varphi)u = \int_0^t (t-s)^{-\alpha} S(t-s) G(\varphi(s))u(s)ds.
  \end{equation}
   Let $\{e_k\}_{k=1}^\infty$ be a complete orthonormal basis for $U$.  For any $t>0$,
  \begin{align*}
    &\Lambda^\alpha_t(\varphi)u = \int_0^t(t-s)^{-\alpha} S(t-s)G(\varphi(s))u(s)ds \\
    &= \sum_{k=1}^\infty \int_0^t (t-s)^{-\alpha}S(t-s)G(\varphi(s))e_k \left<u(s),e_k\right>_Uds.
  \end{align*}
  By H\"older's inequality,
  \begin{align*}
    &|\Lambda^\alpha_t(\varphi)u|_H \\
    &\leq \left(\sum_{k=1}^\infty \int_0^t (t-s)^{-2\alpha}|S(t-s)G(\varphi(s))e_k|_H^2 \right)^{\frac{1}{2}} \left(\sum_{k=1}^\infty \int_0^t \left<u(s),e_k\right>_U^2 ds \right)^{\frac{1}{2}} \\
    &\leq |u|_{L^2([0,t]:H)} \left(\int_0^t (t-s)^{-2\alpha}\|S(t-s)G(\varphi(s))\|_{L_2(U,H)}^2 ds \right)^{\frac{1}{2}}
  \end{align*}
  If \eqref{eq:G-bounded} holds, then
  \[|\Lambda^\alpha_t(\varphi)u|_H \leq |u|_{L^2([0,T]:U)} \left(\int_0^t s^{-2\alpha}K^2(s)ds \right)^{\frac{1}{2}} \leq C |u|_{L^2([0,T]:U)}.\]
  By the factorization formula \eqref{eq:Lambda-factor} and \cite[equation (5.13)]{dpz},
  \[|\Lambda(\varphi)u|_{C([0,T]:H)}^p \leq C \int_0^T |\Lambda^\alpha_t(\varphi)u|_H^pds \leq C\]
  proving \eqref{eq:Lambda-bound}. The proofs for \eqref{eq:Lambda-Lip} and \eqref{eq:Lambda-grow} are analogous.
\end{proof}

\begin{proof}[Sketch of proof of Lemma \ref{lem:Theta}]
 These results are a straightforward consequence of Assumption \ref{assum:B-Lip} and the fact that $S(t)$ is a $C_0$ semigroup. Because $S(t)$ is a $C_0$ semigroup, the mapping $t \mapsto \Theta(\varphi)(t)$ is continuous. Let $\varphi, \psi \in C([0,T]:H)$. Then
 \begin{align*}
   |\Theta(\varphi)(t) - \Theta(\psi)(t)|_H \leq \int_0^t \|S(t-s)\|_{\mathscr{L}(H)} |B(\varphi(s)) - B(\psi(s))|_H ds.
 \end{align*}
 Because $S(t)$ is a $C_0$ semigroup, $\sup_{s \in [0,T]} \|S(s)\|_{\mathscr{L}(H)}<+\infty$ \cite[Theorem 1.2.2]{pazy}. Therefore, it follows from Assumption \ref{assum:B-Lip} that
 \[|\Theta(\varphi)(t) - \Theta(\psi)(t)|_H \leq C\int_0^t |\varphi(s) - \psi(s)|_H ds.\]
 Then \eqref{eq:Theta-Lip} follows by the H\"older inequality. The proof for \eqref{eq:Theta-grow} is the same.
\end{proof}

\section{Proof of Theorem \ref{thm:good-rate-function}} \label{S:good}
As observed in Remark \ref{rem:good}, the rate function $I_x$ is good if for any $N>0$ the level set
\begin{equation} \label{eq:control-set}
  \Phi_x(N) = \left\{X^{0,u}_x: u \in \mathcal{S}^{2N} \right\}
\end{equation}
is a compact subset of $\mathcal{E}$. By Alaoglu's Theorem \cite[Chapter 15.1]{fitzpatrick}, $\mathcal{S}^{2N}$ is a compact metric space under the weak topology on $L^2([0,T]:H)$. We will prove compactness of \eqref{eq:control-set} by proving that whenever $u_n \rightharpoonup u$ in the weak topology on $\mathcal{S}^{2N}$ $X^{0,u_n}_x \to X^{0,u}_x$.

\begin{lemma} \label{lem:Lambda-compact}
  Let $\Lambda$ be defined in \eqref{eq:Lambda}. For any $\varphi \in C([0,T]:H)$, the mapping $u \mapsto \Lambda(\varphi)u$ is continuous from the weak topology on $\mathcal{S}^N$ to the norm topology on $C([0,T]:H)$
\end{lemma}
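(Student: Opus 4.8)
The plan is to reduce the statement, via the factorization formula \eqref{eq:Lambda-factor}--\eqref{eq:Lambda-alpha} already used in the proof sketch of Lemma \ref{lem:Lambda}, to the compactness of a family of Hilbert--Schmidt operators. Since $L^2([0,T]:U)$ is a separable Hilbert space, the bounded weakly closed set $\mathcal{S}^N$ is compact and metrizable in the weak topology (as invoked via Alaoglu's theorem), so it suffices to establish sequential continuity: if $u_n \rightharpoonup u$ weakly in $L^2([0,T]:U)$ with $u_n,u \in \mathcal{S}^N$, then $\Lambda(\varphi)u_n \to \Lambda(\varphi)u$ in $C([0,T]:H)$.

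The first step is to fix $t \in [0,T]$ and analyze the linear map $K_t : L^2([0,T]:U) \to H$ given by $K_t v = \Lambda^\alpha_t(\varphi)v = \int_0^t (t-s)^{-\alpha} S(t-s)G(\varphi(s))v(s)\,ds$, as defined in \eqref{eq:Lambda-alpha}. I would show $K_t$ is Hilbert--Schmidt, hence compact. Its operator-valued kernel $s \mapsto \kappa(s) := (t-s)^{-\alpha}S(t-s)G(\varphi(s))$ takes values in $L_2(U,H)$, is strongly measurable on $[0,t)$ (using continuity of $\varphi$ together with the Lipschitz continuity of $x \mapsto S(\tau)G(x)$ into $L_2$ from Assumption \ref{assum:G-Lip}(a)), and satisfies
\[
\int_0^t \|\kappa(s)\|_{L_2}^2\,ds \le \bigl(1 + |\varphi|_{C([0,T]:H)}\bigr)^2\int_0^t \sigma^{-2\alpha}K(\sigma)^2\,d\sigma < +\infty
\]
by \eqref{eq:K-int} (in the case \eqref{eq:G-bounded} the factor $(1+|\varphi|_{C([0,T]:H)})^2$ is replaced by $1$). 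Expanding $K_t$ in an orthonormal basis $\{f_i\otimes e_j\}_{i,j}$ of $L^2([0,T]:U)$, where $\{f_i\}$ is an orthonormal basis of $L^2([0,T])$ and $\{e_k\}$ one of $U$ and $(f_i\otimes e_j)(s) = f_i(s)e_j$, a direct computation gives $\sum_{i,j}|K_t(f_i\otimes e_j)|_H^2 = \sum_j \int_0^t |\kappa(s)e_j|_H^2\,ds = \int_0^t \|\kappa(s)\|_{L_2}^2\,ds < +\infty$, so $K_t$ is a Hilbert--Schmidt operator. Consequently $u_n \rightharpoonup u$ implies $\Lambda^\alpha_t(\varphi)u_n \to \Lambda^\alpha_t(\varphi)u$ strongly in $H$, for every fixed $t \in [0,T]$.

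The second step upgrades this pointwise convergence to uniform convergence. The H\"older estimate in the sketch of Lemma \ref{lem:Lambda} shows $\sup_n \sup_{t\in[0,T]} |\Lambda^\alpha_t(\varphi)u_n|_H \le C(N,\varphi) < +\infty$, so combining the first step with dominated convergence on $[0,T]$ yields $\Lambda^\alpha(\varphi)u_n \to \Lambda^\alpha(\varphi)u$ in $L^p([0,T]:H)$ for any $p > 1/\alpha$. The operator $R : L^p([0,T]:H) \to C([0,T]:H)$ defined by $(Rg)(t) = \int_0^t (t-s)^{\alpha-1}S(t-s)g(s)\,ds$ is bounded by \cite[equation (5.13)]{dpz}, hence continuous, and by \eqref{eq:Lambda-factor} one has $\Lambda(\varphi)u = \tfrac{\sin(\pi\alpha)}{\pi}\,R\bigl(\Lambda^\alpha(\varphi)u\bigr)$; therefore $\Lambda(\varphi)u_n \to \Lambda(\varphi)u$ in $C([0,T]:H)$, which is the claim.

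The main obstacle is the first step: verifying the strong measurability and square-integrability of the $L_2(U,H)$-valued kernel $s\mapsto(t-s)^{-\alpha}S(t-s)G(\varphi(s))$ near the singularity $s=t$, and carrying out the Hilbert--Schmidt norm identity; the uniform-in-$t$ bound needed to push dominated convergence through the bounded operator $R$ to the supremum norm is comparatively routine given the estimates from Lemma \ref{lem:Lambda}.
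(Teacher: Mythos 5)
Your proposal is correct and follows essentially the same route as the paper: both show $\Lambda^\alpha_t(\varphi)$ is Hilbert--Schmidt (hence compact, hence weak-to-norm continuous) via the same orthonormal-basis computation of its Hilbert--Schmidt norm, and both pass from the pointwise-in-$t$ convergence of $\Lambda^\alpha_t(\varphi)(u_n-u)$ to uniform convergence of $\Lambda(\varphi)(u_n-u)$ using the factorization formula, the bound from \cite[equation (5.13)]{dpz}, and dominated convergence with the domination supplied by Lemma \ref{lem:Lambda}. Your phrasing of the last step as continuity of the operator $R:L^p([0,T]:H)\to C([0,T]:H)$ is just a repackaging of the paper's final inequality.
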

\begin{proof}
  Choose any $T>0$ and $\varphi \in C([0,T]:H)$. First consider the operator $\Lambda^\alpha_t(\varphi)$ defined in \eqref{eq:Lambda-alpha}. We claim that for any $t \in [0,T]$, $\Lambda^\alpha_t(\varphi)$ is a Hilbert-Schmidt operator (and therefore a compact operator) from $L^2([0,T]:U)$ to $H$. To prove the claim, let $\{e_k\}_{k=1}^\infty$ be a complete orthonormal basis of $U$ and let $\{\phi_j\}_{j=1}^\infty$ be a complete orthonormal basis of $L^2([0,T])$. In this way, $\{e_k \phi_j\}_{k,j=1}^\infty$ is a complete orthonormal basis of $L^2([0,T]:U)$. To prove that $\Lambda^{\alpha}_t(\varphi)$ is Hilbert-Schmidt we calculate that
  \begin{align*}
    &\|\Lambda^\alpha_t(\varphi)\|_{L_2(L^2([0,T]:U), H)}^2\\
    &=\sum_{k=1}^\infty \sum_{j=1}^\infty |\Lambda^\alpha_t(\varphi) e_k \phi_j|_H^2\\
    &=\sum_{k=1}^\infty \sum_{j=1}^\infty \left|\int_0^t (t-s)^{-\alpha} S(t-s)G(\varphi(s))e_k \phi_j(s)ds \right|_H^2.
  \end{align*}
  Let $\{f_i\}_{i=1}^\infty$ be a complete orthonormal basis of $H$. Then the above expression equals
  \begin{align*}
    &= \sum_{k=1}^\infty \sum_{j=1}^\infty \sum_{i=1}^\infty \left(\int_0^t (t-s)^{-\alpha} \left<S(t-s)G(\varphi(s))e_k,f_i\right>_H \phi_j(s) ds \right)^2.
  \end{align*}
  Because $\{\phi_j\}_{j=1}^\infty$ is a complete orthonormal basis for $L^2([0,T])$ and $\{f_i\}_{i=1}^\infty$ is a complete orthonormal basis of $H$, this equals
  \begin{align*}
    &= \sum_{k=1}^\infty  \sum_{i=1}^\infty \int_0^t (t-s)^{-2\alpha}\left<S(t-s)G(\varphi(s))e_k, f_i \right>_H^2 ds\\
    &=\sum_{k=1}^\infty \int_0^t (t-s)^{-2\alpha} |S(t-s)G(\varphi(s))e_k|_H^2\\
    &= \int_0^t (t-s)^{-2\alpha} \|S(t-s)G(\varphi(s))\|_{L_2(U,H)}^2ds.
  \end{align*}
  This is finite by Assumption \ref{assum:G-Lip} proving that $\Lambda^\alpha_t(\varphi)$ is a Hilbert-Schmidt operator from $L^2([0,T]:U)$ to $H$.

  Hilbert-Schmidt operators are compact operators. Compact operators are continuous from the weak topology to the norm topology \cite[Proposition VI.3.3]{conway}. This means that for any sequence $u_n \rightharpoonup u$ in the weak topology on $L^2([0,T]:U)$, and any $t \in [0,T]$,
  \begin{equation}\label{eq:weak-limit}
    \lim_{n \to +\infty} \left|\Lambda^\alpha_t(\varphi)(u_n-u)\right|_H = 0.
  \end{equation}

  By the factorization method \eqref{eq:Lambda-factor} and \cite[equation (5.13)]{dpz}, for $p>\frac{1}{\alpha}$,
  \[|\Lambda(\varphi)(u_n-u)|_{C([0,T]:H)}^p \leq C \int_0^T |\Lambda^\alpha_t(\varphi)(u_n-u)|^pdt.\]
  This converges to zero by \eqref{eq:weak-limit} and the dominated convergence theorem (domination due to Lemma \ref{lem:Lambda}).
\end{proof}

\begin{proof}[Proof of Theorem \ref{thm:good-rate-function}]
  Let $u_n$ be an arbitrary sequence in $\mathcal{S}^{2N}$. Because $S^{2N}$ is weakly compact in $L^2([0,T]:U)$, there exists a subsequence (relabeled $u_n)$ and a limit $u$ such that $u_n \rightharpoonup u$ in the weak topology. We show that $X^{0,u_n}_x \to X^{0,u}_x$ in $C([0,T]:H)$. This proves compactness because the original sequence $u_n$ was arbitrary and every element of $\Phi_x(N)$ can be written as $X^{0,u}_x$ for some $u \in S^{2N}$. Observe that
  \[X^{0,u_n}_x - X^{0,u}_x = \Theta(X^{0,u_n}_x) - \Theta(X^{0,u}_x) + \Lambda(X^{0,u_n}_x)u_n - \Lambda(X^{0,u}_x)u.\]
  We rewrite this as
  \begin{align*}
    X^{0,u_n}_x - X^{0,u}_x =   \Theta&(X^{0,u_n}_x) - \Theta(X^{0,u}_x) + \Lambda(X^{0,u_n}_x)u_n - \Lambda(X^{0,u}_x)u_n \\
    &+ \Lambda(X^{0,u}_x)(u_n-u).
  \end{align*}
  By  \eqref{eq:Lambda-Lip} and \eqref{eq:Theta-Lip}, for $t \in [0,T]$, taking into account that $u_n, u \in \mathcal{S}^{2N}$,
  \begin{align*}
    |X^{0,u}_x - X^{0,u_n}_x|^p_{C([0,t]:H)} \leq C&| \Lambda(X^{0,u}_x)(u_n-u)|_{C([0,t]:H)}^p \\
    &+ C(1+(2N)^{p/2})\int_0^t|X^{0,u}_x - X^{0,u_n}_x|^p_{C([0,s]:H)}ds.
  \end{align*}
  By Gr\"onwall's inequality,
  \[|X^{0,u}_x - X^{0,u_n}_x|^p_{C([0,T]:H)}\leq C e^{C(1 + (2N)^{p/2})T} | \Lambda(X^{0,u}_x)(u_n-u)|_{C([0,t]:H)}^p. \]
  This converges to zero by Lemma \ref{lem:Lambda-compact} proving that $\Phi_x(N)$ is compact and that $I_x$ is a good rate function.
\end{proof}

\end{appendix}

\section*{Acknowledgements}
The author thanks Amarjit Budhiraja, Paul Dupuis, and David Lipshutz for many helpful discussions. The author especially thanks Budhiraja and Dupuis for sharing insights on Lemmas \ref{lem:ulp->fw-low} and \ref{lem:ulp->fw-up}.

\bibliographystyle{imsart-number}
\bibliography{2017}

\end{document}